\documentclass[11pt, twoside,leqno]{article}

\usepackage{amssymb}
\usepackage{amsmath}
\usepackage{amsthm}
\usepackage{yhmath}
\usepackage{mathrsfs}
\usepackage{color}
\usepackage{txfonts}
\usepackage{indentfirst}
\usepackage{booktabs}
\usepackage{graphicx}
\usepackage{subfigure}
\usepackage{tikz}
\usepackage{enumerate}
\usepackage{anysize}
\usepackage{setspace}
\usepackage{float}
\usepackage{color}
\usepackage{chngcntr}


\textwidth=16cm
\textheight=24.27cm
\oddsidemargin -0.15cm
\evensidemargin -0.15cm

\allowdisplaybreaks

\pagestyle{myheadings}\markboth{\footnotesize\rm\sc
Shuhui Yang, Zunwei Fu, Dachun Yang, Yan Lin and Zhen Li}
{\footnotesize\rm\sc Refining Image Edge Detection 
via LCRTs}

\def\esssup{\mathop\mathrm{\,ess\,sup\,}}

\def\1bf{\mathbf{1}}
\def\0bf{\mathbf{0}}


\def\red{\color{red}}

\newtheorem{theorem}{Theorem}[section]
\newtheorem{lemma}[theorem]{Lemma}

\theoremstyle{definition}
\newtheorem{remark}[theorem]{Remark}

\newtheorem{definition}[theorem]{Definition}
\renewcommand{\appendix}{\par
   \setcounter{section}{0}%
   \setcounter{subsection}{0}%
   \setcounter{subsubsection}{0}%
   \gdef\thesection{\@Alph\c@section}%
   \gdef\thesubsection{\@Alph\c@section.\@arabic\c@subsection}%
   \gdef\theHsection{\@Alph\c@section.}%
   \gdef\theHsubsection{\@Alph\c@section.\@arabic\c@subsection}%
   \csname appendixmore\endcsname
 }

\numberwithin{equation}{section}

\begin{document}

\arraycolsep=1pt

\title{\bf\Large
Refining Image Edge Detection via Linear Canonical Riesz Transforms
\footnotetext{\hspace{-0.35cm} 2020 {\it Mathematics Subject
Classification}.
Primary 42B20; Secondary 42B35, 42A38, 94A08.
\endgraf {\it Key words and phrases.} linear canonical transform,
linear canonical Riesz transform, 
linear canonical multiplier, sharpness, edge detection.
\endgraf This project is partially supported by
the National Key Research and Development Program of China
(Grant No. 2020YFA0712900),
 the National Natural Science Foundation of China
(Grant Nos.\  12431006, 12371093, and 12471090), and the
China Postdoctoral Science Foundation (Grant No.
2024M760238).}}
\author{Shuhui Yang, Zunwei Fu, Dachun Yang\footnote{Corresponding
author, E-mail: \texttt{dcyang@bnu.edu.cn}/{\red \today}/Final version.},\ \ Yan Lin and  Zhen Li}
\date{}
\maketitle

\vspace{-0.8cm}

\begin{center}
\begin{minipage}{13cm}
{\small{\textbf{Abstract}}\quad}
Combining the linear canonical transform and the Riesz
transform, we introduce the  linear canonical Riesz 
transform (for short, LCRT), which is further proved to 
be a linear canonical multiplier.   Using this LCRT multiplier, 
we conduct numerical simulations on images. Notably, 
the LCRT multiplier significantly reduces the complexity of 
the algorithm. Based on these we introduce the new concept of
the sharpness $R^{\rm E}_{\rm sc}$ of the edge strength and continuity of images 
associated with the LCRT  and, using it, we propose a new LCRT 
image edge detection method (for short, LCRT-IED method) 
and provide its mathematical foundation. Our experiments indicate 
that this sharpness $R^{\rm E}_{\rm sc}$ characterizes 
the macroscopic trend of edge  variations of the image under
 consideration, while this new LCRT-IED 
method  not only controls the overall edge strength and continuity 
of the image, but also excels in feature extraction in some local 
regions.  These highlight the fundamental differences between the LCRT and 
the Riesz transform, which are precisely due to the
multiparameter of the former. This new LCRT-IED method might be of significant 
importance  for image feature extraction, image matching, and 
image refinement.
\end{minipage}
\end{center}
\vspace{0.2cm}

\tableofcontents

\vspace{0.2cm}
\section{Introduction}\label{sec1}
As is well known, the Fourier transform, proposed by Joseph
Fourier in 1807, was originally intended to solve the heat
conduction equation. Moreover, over time, the range of
applications of the Fourier transform has rapidly expanded,
evolving into a key technology in scientific research and
engineering. Over the course of more than two centuries,
the Fourier transform has not only occupied a significant
position in mathematics but has also become a cornerstone
of signal processing and analysis, particularly in analyzing
and processing stationary signals (see, for example, Plonka et al. \cite{ppst23} and Sun et al.
\cite{ccs23,ccls23} for some recent applications of the Fourier
transform in graph signal processing). To recall the precise definition
of the Fourier transform, we denote the set of all Schwartz
functions equipped with the well-known topology determined
by a countable family of norms by $\mathscr{S}(\mathbb{R}^n)$,
and also by $\mathscr{S}'(\mathbb{R}^n)$ the set of all continuous linear
functionals on $\mathscr{S}(\mathbb{R}^n)$ equipped with the
weak$-\ast$ topology.

\begin{definition}\label{df.FT}
For any $f \in \mathscr{S} ({\mathbb{R}^n})$, its \emph
{Fourier transform} $\mathscr{F}(f)$ or $\widehat f$ is defined
by setting, for any $\boldsymbol{x}\in{\mathbb{R}^n}$,
$$\widehat f(\boldsymbol{x}):= \mathscr{F}(f)
(\boldsymbol{x}):=\frac{1}{(\sqrt{2\pi}
)^n}\int_{\mathbb{R}^n}f(\boldsymbol{x})
{e^{-i\boldsymbol{x}\cdot\boldsymbol{y}}}\,d\boldsymbol{y}$$
and its \emph{inverse Fourier transform}  ${f^\vee}$ is defined
by setting, for any $\boldsymbol{x}\in
{\mathbb{R}^n}$, $f^\vee(\boldsymbol{x}):=
\widehat f(-\boldsymbol{x})$.
\end{definition}

As the scopes and the subjects of research continue to
expand, the limitations of the Fourier transform in
handling non-stationary signals have become increasingly
evident. These limitations manifest primarily in the
following way. The Fourier transform is a global transform
that provides the overall spectrum of a signal but fails to
capture the local time-frequency characteristics crucial for
such signals. Consequently, it cannot accurately represent
the frequency behavior of signals as they evolve over time.
This renders the Fourier transform inadequate for analyzing
non-stationary and time-varying signals. In order to overcome these
limitations,  the fractional Fourier transform,
the short-time Fourier transform, the Wigner distribution,
and the wavelet transform were proposed. Among these, the fractional Fourier
transform, as an extension of the Fourier transform, offers
increased flexibility by introducing additional parameters,
particularly when dealing with non-stationary signals (see,
for example, \cite{mk87, z24}). Notably, the research on
the multidimensional fractional Fourier transform has obtained
an increasing attention in recent years (see, for example,
\cite{krz21,krz22,krz23,ozk01,z2018, z2019}). To be precise,
Namias \cite{n1980} introduced  the following
 multidimensional fractional Fourier transform;
 see also the recent monograph \cite{z24} of Zayed
 for a systematic study on the fractional Fourier transform.

\begin{definition}\label{frft}
Let  $\boldsymbol{\alpha}:=(\alpha_1,\ldots,\alpha_n)
\in [0,2\pi)^n$ and $f\in\mathscr{S}(\mathbb{R}^n)$.
The  \emph{fractional Fourier transform} (for short, FrFT)
$\mathcal{F}_{\boldsymbol{\alpha}}(f)$,
with order $\boldsymbol{\alpha}$, of $f$ is defined by
setting, for any $\boldsymbol{u}\in\mathbb{R}^n$,
$$\mathcal{F}_{\boldsymbol{\alpha}}(f)(\boldsymbol
{u}):=\int_{\mathbb{R}^n}f(\boldsymbol{x})K_{
\boldsymbol{\alpha}}(\boldsymbol{x},\boldsymbol{u})\,
d\boldsymbol{x},$$ where, for any $\boldsymbol{x}:=
(x_1,\ldots,x_n),\boldsymbol{u}:=(u_1,\ldots,u_n)\in
\mathbb{R}^n$, $$K_{\boldsymbol{\alpha}}(\boldsymbol
{x},\boldsymbol{u}):=\prod^n_{k=1}K_{\alpha_k}
(x_k,u_k)$$ and, for any $k\in \{1,\ldots,n\}$,
\begin{equation*}
K_{\alpha_k}(x_k,u_k):=\left\{
\begin{array}
[c]{ll}
\frac{c(\alpha_k)}{\sqrt{2\pi}}e^{\pi i[a(\alpha_k)(x_k^2
+u_k^2)-2b(\alpha_k)x_ku_k]}
&\quad \mathrm{if}\ \alpha_k\not\neq0,\pi ,\\
\delta(x_k-u_k) &\quad \mathrm{if}\ \alpha_k=0,\\
\delta(x_k+u_k) &\quad \mathrm{if}\ \alpha_k=\pi
\end{array}
\right.
\end{equation*}
with  $a(\alpha_k):=\frac{\cot(\alpha_k)}{2}$, $b(\alpha_k):=
\csc(\alpha_k)$, $c(\alpha_k):=\sqrt{1-i\cot(\alpha_k)}$, and
$\delta$ being the \emph{Dirac measure} at $0$.
\end{definition}
Observe that,
when $\boldsymbol{\alpha}=(\frac{\pi}{2},\ldots,\frac{\pi}{2})$,
the FrFT becomes the classical Fourier transform.
The FrFT is currently employed in various fields
of scientific research and engineering, including
swept filters, artificial neural networks, wavelet
transforms, time-frequency analysis, and complex
transmission (see, for example, \cite{cfgw21,
flyy24, mun02,  sdj11, 
tlw10, Yetik}).

The linear canonical transform (for short, LCT), as
an extension of the FrFT, is a more comprehensive
novel mathematical transform for the analysis and
the processing of non-stationary signals. The one-dimensional
(for short, 1D) LCT is characterized by four parameters
$\{a,b,c,d\}$, and it is also referred to as the ABCD
matrix transform \cite{b1996}. Additionally, the LCT
is also called the generalized Fresnel transform \cite
{ja1996}, the quadratic phase system \cite{b1978},
 and the extended FrFT \cite{hll1997}.

In the 1D Euclidean space, the LCT features three free
parameters, which enhances its flexibility compared
to the FrFT. In 1961, Bargmann \cite{b1961} first
introduced the LCT with complex parameters. Collins
\cite{c1970}, along with Moshinsky and Quesne
\cite{mq71},  formally presented the LCT in the
1970s. Initially, the LCT was employed to solve
differential equations and analyze optical systems.
Subsequently, the relation between the LCT and existing
transforms and implementation methods of the LCT in
optical systems have been established (see, for example, \cite{
bko1997,hkos15}). With the rapid development
of the FrFT in the 1990s, the LCT gradually garnered
attention in the field of signal processing.   Barshan
et al. \cite{bko1997} gave the application of
the LCT in optimal filter design in 1997.   Pei
et al.  \cite{pd2000} explored the discrete algorithms
of the LCT, providing two types of discrete forms and
applying this discrete algorithm to filter design and
other areas; \cite{pd2002} gave the eigenvalues and
the eigenfunctions of the LCT, the LCT is divided into
seven cases based on different parameter assumptions,
and the eigenfunctions are then applied to the self-imaging
problem in optics. For more studies of LCTs,
we refer to \cite{ddp24,hs22,hs222,yz24}.

In multidimensional signal processing, it is crucial to extend
commonly used 1D signal processing tools to accommodate
multidimensional signals. The multidimensional LCT is a more
general form of the multidimensional Fourier transform and FrFT,
and it has been widely used in various applications, including
filter design, sampling, image processing, and pattern
recognition  (see, for example, \cite{dp1999,
kock08,ko1998,sko98}). The multidimensional LCT was
 introduced in \cite{dp1999} as follows.

\begin{definition}\label{nDLCT}
Let $\boldsymbol{A}:=(A_1,\ldots,A_n)$
with the matrix $A_k:=\begin{bmatrix}
{a_k}&{b_k}\\{c_k}&{d_k}
\end{bmatrix}\in{M_{2 \times 2}}(\mathbb{R}
)$ ($M_{2\times 2}$ denotes the set of all $2\times 2$
real matrixes with determinant equalling 1) for any $k\in\{1,\ldots,n\}$.
For any $f \in \mathscr{S}({\mathbb{R}^n})$,
the \emph{linear canonical transform}
${\mathscr{L}_{\boldsymbol{A}}}$
is defined by setting, for any $\boldsymbol{u}\in\mathbb{R}^n$,
$$\mathscr{L}_{\boldsymbol A}(f)(\boldsymbol{u}):=
\int_{\mathbb{R}^n}f(\boldsymbol{x})K_{\boldsymbol{A}}
(\boldsymbol{x},\boldsymbol{u})\,d\boldsymbol{x},$$
where, for any $\boldsymbol{x}:=(x_1,\ldots,x_n),
\boldsymbol{u}:=(u_1,\ldots,u_n)\in \mathbb{R}^n$,
$$K_{\boldsymbol{A}}(\boldsymbol{x},\boldsymbol{u})
:=\prod^n_{k=1}K_{A_k}(x_k,u_k)$$ and, for any
$k\in\{1,\ldots,n\}$,
\begin{equation*}
K_{A_k}(x_k,u_k):=\left\{
\begin{array}
[c]{ll}
C_{A_k}e_{b_k,a_k}({x_k})
e_{-b_k}\left( x_k,u_k \right)e_{b_k,d_k}({u_k})
&\quad \mathrm{if}\ b_k\not\neq0,\\
\sqrt {d_k} e^{\frac{i}{2}\left( {c_k}{d_k}u_k^2
\right)}\delta(x_k-d_ku_k) &\quad \mathrm{if}\ b_k=0
\end{array}
\right.
\end{equation*}
with $C_{A_k}:=\sqrt {\frac{1}{i2\pi b_k}}$,
$e_{b_k,a_k}({x_k}):=e^{i\frac{a_k}{2b_k}x_k^2}$,
$e_{-b_k}(u_k, x_k):= e^{-i{\frac{{u_k}{x_k}}
{b_k}} }$,
$e_{b_k,d_k}({u_k}):= e^{i\frac{d_k}{2b_k}u_k^2}$,
and  $\delta$ as in Definition \ref{frft}.
\end{definition}

In Definition \ref{nDLCT}, when the parameter matrices
$\boldsymbol{A}:=({A_1,\ldots,A_n})$ take several different
specific values, the LCT, as a general integral transform,  
reduces to several classical well-known integral transforms 
(see Remark \ref{lctspecial}). These
transforms include the Fourier transform, the FrFT, the scaling
operator, the chirp multiplication, and the Fresnel transform.
Such transforms play a crucial role in solving problems
related to electromagnetic, acoustic, and wave propagation
(see, for example, \cite{asf2017,bshyh2007,ss04}).

\begin{remark}\label{lctspecial}
Let $j\in\{1,\ldots,n\}$, $\boldsymbol{A}:=(A_1,\ldots,A_n)$ with
$A_j:=\begin{bmatrix}
{a_j}&{b_j}\\{c_j}&{d_j}
\end{bmatrix}\in {M_{2 \times 2}}
(\mathbb{R})$, and 
$f \in { \mathscr{S}({\mathbb{R}^n})}$.
\begin{enumerate}[(i)]
\item If $A_j :=
\begin{bmatrix}
0&1\\{-1}&0
\end{bmatrix}$ for all $j$, then the LCT
$\mathscr{L}_{\boldsymbol{A}}$ in this case
reduces to the Fourier transform
 multiplied by $(\sqrt{-i})^n$. That is,
 for any $\boldsymbol{u}\in\mathbb{R}^n$,
$\mathscr{L}_{\boldsymbol{A}}(f)(\boldsymbol{u})={(\sqrt
{-i})^n}\mathscr{F}(f)(\boldsymbol u)$;
\item If $A_j:= \begin{bmatrix}	
{\cos{\alpha_j}}\,&{\sin{\alpha _j}}\\
{-\sin{\alpha_j}}\,&{\cos{\alpha_j}}
\end{bmatrix}$ for all $j$, then the LCT
$\mathscr{L}_{\boldsymbol{A}}$ in this case
reduces to the FrFT multiplied by $n$ fixed phase factors.
That is,  for any $\boldsymbol{u}\in\mathbb{R}^n$, $\mathscr{L}_
 {\boldsymbol{A}}(f)(\boldsymbol u)=\prod\limits_{j=1}^n
 \sqrt{ e^{-i\alpha _j}}\mathscr{F}_{\boldsymbol{\alpha}}(f)
 (\boldsymbol u)$;
\item If $A_j:=\begin{bmatrix}
\sigma &0\\0&{\frac{1}{\sigma}}\end{bmatrix}$ for all $j$,
then the LCT $\mathscr{L}_{\boldsymbol{A}}$ in this case 
reduces to the scaling operator;
\item If $A_j:= \begin{bmatrix}
1&\frac{z\lambda}{2\pi}\\0&1\end{bmatrix}$ for all $j$,
then the LCT $\mathscr{L}_{\boldsymbol{A}}$ in this case
reduces to the Fresenel transform,
where  $\lambda$ is the wave length and $z$ is the
dissemination distance;
\item If $A_j:=\begin{bmatrix}
1&0\\{-q}&1\end{bmatrix}$ for all $j$, then
the LCT $\mathscr{L}_{\boldsymbol{A}}$ in this case
reduces to the chirp multiplication.
\end{enumerate}
\end{remark}

\begin{definition}\label{cf}
Let $\boldsymbol{a}:=(a_1,\ldots,a_n),
\boldsymbol{b}:=(b_1,\ldots,b_n)\in\mathbb R^n$.
The \emph{chirp functions}
$e_{\boldsymbol{a},\boldsymbol{b}}$  and $e_{\boldsymbol{a}}$
are defined, respectively, by setting,  for any $\boldsymbol{x}:=(x_1,\ldots,x_n),
\boldsymbol{t}:=(t_1,\ldots,t_n)\in \mathbb{R}^n$,
\begin{equation}\label{1eq}
{e}_{\boldsymbol{a},\boldsymbol{b}}( \boldsymbol x
) :=e^{i\sum_{j=1}^n\frac{b_j}{2a_j}x_j^2},
\end{equation}
and
\begin{equation}\label{2eq}
e_{\boldsymbol{a}}(\boldsymbol{x},\boldsymbol{t}):=
e^{i\sum_{j=1}^n\frac{1}{a_j}x_jt_j}.
\end{equation}
\end{definition}

\begin{remark}\label{rem-lct}
Let $\boldsymbol{A}:=(A_1,\ldots,A_n
)$ with $A_j:=
\begin{bmatrix}
{{a_j}}&{{b_j}}\\{{c_j}}&{{d_j}}
\end{bmatrix}\in {M_{2\times2}}
(\mathbb{R})$ and $b_j\neq 0$ for any $j\in \{1,\ldots,n\}$.
Define $C_{\boldsymbol{A}}:=\prod^n_{k=1}C_{A_k}$,
where $C_{A_k}$ is the same as in Definition \ref{nDLCT}.
Let $\boldsymbol{a}:=(a_1,\ldots,a_n)$,
$\boldsymbol{b}:=(b_1,\ldots,b_n)$,
$\boldsymbol{d}:=(d_1,\ldots,d_n)$, 
and $f\in{\mathscr{S}(\mathbb{R}^n)}$.
  By Definitions \ref{df.FT}
and \ref{nDLCT}, the LCT $\mathscr{L}_{\boldsymbol{A}}(f)$
of the signal $f$ is
exactly the Fourier transform of the signal
$g:={e}_{\boldsymbol{b},
\boldsymbol{a}}f$
multiplied by the chirp functions, where ${e}_{\boldsymbol{b},
\boldsymbol{a}}$ is the same as in \eqref{1eq}
with exchanging the position of  $\boldsymbol a$ and
$\boldsymbol{b}$. In other words,
the LCT $\mathscr{L}_{\boldsymbol{A}}(f)$
 of $f$ can be decomposed into that, for any
 $\boldsymbol x\in \mathbb{R}^n$,
\begin{equation}\label{equation1.2}
 \mathscr{L}_{\boldsymbol A}(f)(\boldsymbol x)=
 {\left( \sqrt {2\pi }  \right)^n}C_{\boldsymbol A}
 {e}_{\boldsymbol{b},\boldsymbol{d}}
 (\boldsymbol x)\mathscr{F}\left({e}_{\boldsymbol{b},
 \boldsymbol{a}}f \right)
 (\widetilde{\boldsymbol{x}}),
\end{equation}
where ${e}_{\boldsymbol{b},
\boldsymbol{d}}$ is the same as in \eqref{1eq}
with $\boldsymbol{a}$ and $\boldsymbol{b}$
replaced, respectively,  by $\boldsymbol{b}$
and $\boldsymbol{d}$
and where $\boldsymbol{\widetilde x}:=
\frac{\boldsymbol x}{\boldsymbol b}:= (\frac{x_1}{b_1},
\ldots, \frac{x_n}{b_n})$.	
Based on \eqref{equation1.2}, we observe
that the LCT $\mathscr{L}_{\boldsymbol{A}}(f)$ of
a function $f\in{\mathscr{S}(\mathbb{R}^n)}$  consists of the following four simpler operators
(as illustrated in Figure \ref{img1}):

\begin{enumerate}[(i)]
\item The multiplication by a chirp function, that is,
for any $\boldsymbol t\in \mathbb{R}^n$,
$g(\boldsymbol{t}) := {e}_{\boldsymbol{b},\boldsymbol{a}}
f(\boldsymbol{t})$;
\item  The Fourier transform, that is, for any $\boldsymbol x
\in \mathbb{R}^n$, $\widehat{g}(\boldsymbol{x})
:=(\mathscr{F}g)(\boldsymbol x)$;
\item The scaling operator, that is, for any $\boldsymbol x\in \mathbb{R}^n$,
$\widetilde{g}(\boldsymbol x)
:= g(\frac{\boldsymbol x}{\boldsymbol b})$;
\item The multiplication by a chirp function again, that is,
for any $\boldsymbol x\in \mathbb{R}^n$,
$$\mathscr{L}_A(f)(\boldsymbol{x}) :=
\left(\sqrt{2\pi}\right)^n{C_A}{{e}_{\boldsymbol{b},\boldsymbol{d}}}\widetilde{g}
(\boldsymbol{x}).$$
\end{enumerate}
\begin{figure}[H]
\centering
\includegraphics[width=0.8\linewidth]{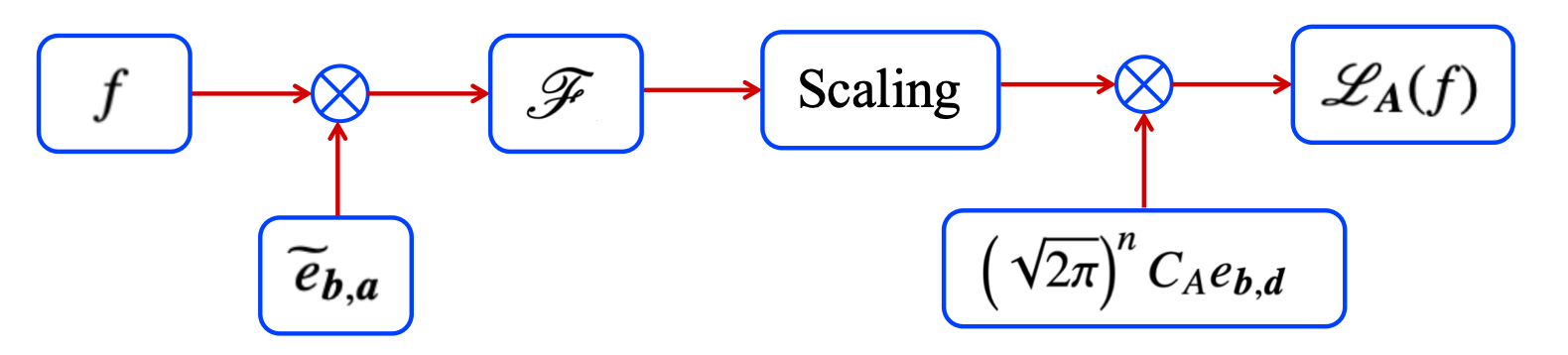}
\caption{The decomposition of the LCT $\mathscr{L}_{\boldsymbol A}$.}
\label{img1}
\end{figure}
\end{remark}
		
The definition of the multidimensional inverse LCT in \cite{mac22} is
as follows.	
	
\begin{definition}\label{nLCT}
Let $\boldsymbol{A}:=(A_1,\ldots ,A_n)$
and $\boldsymbol{A}^{-1}:=(A_1^{-1},\ldots ,A_n^{-1})$,
respectively, with the matrices $A_k:=\begin{bmatrix}
{a_k}&{b_k}\\
{c_k}&{d_k}
\end{bmatrix},A_k^{-1}:=
\begin{bmatrix}
{d_k}&{-b_k}\\
{-c_k}&{\ a_k}
\end{bmatrix}\in{M_{2 \times 2}}
( \mathbb{R})$ for any $k\in\{1,\ldots,n\}$.
For any $f\in\mathscr{S}(\mathbb{R}^n)$,
its \emph{multidimensional inverse} LCT
$({\mathscr{L}_{\boldsymbol{A}}})^{-1}(f)$
is defined by setting, for any $\boldsymbol{x}\in \mathbb{R}^n$,
$ \left({\mathscr{L}_{\boldsymbol{A}}}\right)^{-1}(f)
(\boldsymbol x):={\mathscr{L}_{\boldsymbol{A}^{-1}}}(f)
(\boldsymbol x).$
\end{definition}	

The Hilbert transform occupies a significant position in 
Fourier analysis and plays a crucial role in signal processing. 
It has been extensively applied to modulation theory, edge 
detection, and filter design (see, for example, \cite{b19921,b19922,g1946,ppst23}). In recent years, the FrFT and the LCT are 
respectively combined with the Hilbert transform to lead 
to the fractional Hilbert transform and the linear canonical 
Hilbert transform (for short, LCHT), which yield significant 
results (see, for example, \cite{cfgw21,z1998}). The Riesz 
transform serves as a natural extension of the Hilbert transform 
in the $n$-dimensional Euclidean space. In 2023, Fu et al. 
\cite{fglwy23} combined the FrFT with the Riesz transform, 
introducing the fractional Riesz transform and applying it 
to edge detection. Compared to the classical Riesz transform, 
the fractional Riesz transform is capable of detecting not 
only global information in images but also local information
in arbitrary directions.

In this article, combining the linear canonical transform and the Riesz
transform, we introduce the  linear canonical Riesz 
transform (for short, LCRT), which is further proved to 
be a linear canonical multiplier.   Using this LCRT multiplier, 
we conduct numerical simulations on images. Notably, 
the LCRT multiplier significantly reduces the complexity of 
the algorithm. Based on these we introduce the new concept of
the sharpness $R^{\rm E}_{\rm sc}$ of the edge strength and continuity of images 
associated with the LCRT  and, using it, we propose a new LCRT 
image edge detection method (for short, LCRT-IED method) 
and provide its mathematical foundation. Our experiments indicate 
that this sharpness $R^{\rm E}_{\rm sc}$ characterizes 
the macroscopic trend of edge  variations of the image under
 consideration, while this new LCRT-IED 
method not only controls the overall edge strength and continuity 
of the image, but also excels in feature extraction in some local 
regions.  These highlight the fundamental differences between the LCRT and 
the Riesz transform, which are precisely due to the 
multiparameter of the former. This new LCRT-IED method might be of significant 
importance  for image feature extraction, image matching, and 
image refinement.

The remainder of this article is organized as follows.
	
In Section \ref{sec2}, we introduce the LCRT and  prove
that the LCRT is the linear canonical multiplier (Theorem
\ref{LCRT-rf}). Additionally, we obtain the boundedness
of the LCRT on Lebesgue spaces (Theorem \ref{Lp}).

In Section \ref{sec3}, we conduct numerical simulations
of images using the LCRT, demonstrating the image of the
original test image (Gaussian function) after the LCRT
(Figure \ref{FIG5.2}) and, in the LCT domain,
the amplitude, the real part, and the imaginary part of both
the original test image and the image after the LCRT (Figures
\ref{FIG5.3} and \ref{FIG5.4}). The obtained results demonstrate
that the LCRT exhibits both the amplitude attenuate and the
shifting effect in the LCT domain. Additionally, we conduct image
numerical simulations of the LCHT to compare the differences
between the multiplier of the LCHT and the LCRT (Figure \ref{FIG9}).

In Section \ref{sec4},  we first introduce  the new concept of 
the sharpness  $R^{\rm E}_{\rm sc}$ of the edge strength
 and continuity of images associated 
with the LCRT in Definition \ref{is}. Based on this we then propose 
the new LCRT-IED method and provide its mathematical foundation (Theorem 
\ref{p-sharp}).  This sharpness  $R^{\rm E}_{\rm sc}$ is used to characterize the 
macroscopic trend of edge changes in the image under consideration. 
Our experimental results (Figures \ref{FIG10}, 
 \ref{FIG10-1}, \ref{FIG11},  \ref{FIG11-1}, \ref{FIG1111}, and
 \ref{FIG1111-1} and Tables \ref{tab:1}, \ref{tab:2}, and \ref{tab:3})
demonstrate that the LCRT-IED method can gradually control 
of the edge strength and  continuity by subtly adjusting the 
parameter matrices of the LCRT. 
Additionally, this new LCRT-IED method also demonstrates its capabilities 
in feature extraction within some local regions, enabling better 
preservation of image details. Also, in this section, we further 
summarize the advantages and features of this new LCRT-IED 
method in image processing.

A conclusion is given in Section \ref{sec5}.

It is worth mentioning that the experimental results
presented in this article do not require the final binarization
like the fractional Riesz transform edge detection method in 
\cite{fglwy23} and, moreover, the results are equally striking.
This can be attributed to the fact that the LCT is able
to perform not only rotations but also scaling operations
in the time-frequency plane. Furthermore, this new LCRT-IED method 
also performs exceptionally well when processing RGB images.

Finally, we make some conventions on notation.
Let  $\mathbb{N}:=\{1,2,\ldots\}$, ${C}^\infty({\mathbb{R}^n})$ denote the
set of all infinitely differentiable functions on $\mathbb{R}^n$.
We use $C$ to denote a positive constant,
which is independent of the main parameters involved,
whose value may vary from line to line. The \emph{symbol}
$g\lesssim h$ means $g\leq Ch$. If $g\lesssim h$ and $h\lesssim g$,
we then write $g\sim h$.
Let $p\in(0,\infty]$.
 The \emph{Lebesgue space} $L^p(\mathbb{R}^n)$
is defined to be the set of all measurable functions
$f$ on $\mathbb{R}^n$
such that, when $p\in(0,\infty)$,  
$\|f\|_{L^p(\mathbb{R}
^n)}:=\left[\int_{\mathbb{R}^n}|f(x)|^p\,
dx\right]^{\frac 1p}<\infty$
and
$$\|f\|_{L^{\infty}(\mathbb{R}^n)}:=\esssup_{
x\in \mathbb{R}^n}|f(x)|<\infty.$$
Moreover, when we prove a theorem (and the like), in its
proof we always use the same symbols as those used in
the statement itself of that theorem (and the like).

\section{Linear Canonical Riesz Transforms}\label{sec2}

The Riesz transform, as a natural extension of the
Hilbert transform in the $n$-dimensional
Euclidean space, is the archetype of Calder\'{o}n--Zygmund
operators,  which is not only an important class of
convolution operators but also a special class of Fourier multipliers.
The fractional Hilbert transform and the LCHT
have been extensively studied and widely applied in
signal processing (see, for example, \cite{fl08,hll1997,xwx09,zl18}). The  LCHT was introduced by Li et al.
in \cite{ltw06}  as follows.

\begin{definition}\label{LCTHilbertdef}
Let
$A :=\begin{bmatrix}a&b\\c&d\end{bmatrix}
\in {M_{2 \times 2}}(\mathbb{R})$ and $b\ne0 $.
The \emph{linear canonical Hilbert transform} (for short, LCHT)
${H_A}$ is defined by setting, for
any $f \in \mathscr{S}({\mathbb{R}})$ and $t\in {\mathbb{R}}$,
\begin{align*}
{H_A}(f)(t):= \frac{1}{\pi }e^{-i\frac{d}{2b}{t^2}}
{\rm p}.{\rm v}.\int_{\mathbb{R}}
{\frac{f(x)e^{i\frac{a}{2b}{x^2}}}{t-x}\,dx} .
\end{align*}	
\end{definition}

\begin{remark}	
If $A:= \begin{bmatrix}	
{\cos{\alpha}\,}&{\sin{\alpha}}\\
{-\sin{\alpha}\,}&{\cos {\alpha}}
\end{bmatrix}$ with $\alpha\in	(0,2\pi)$ and $\alpha\neq \pi$,
then the LCHT  reduces to the fractional Hilbert transform
in \cite[Definition 1]{z1998}.
\end{remark}

The two-dimensional (for short, 2D)   half-planed Hilbert transform was introduced by Xu et al. in \cite{xwx09} as follows.
\begin{definition}\label{2DdefHilbert1}
Let $A_j:=\begin{bmatrix}
{a_j}&{b_j}\\{c_j}&{d_j}
\end{bmatrix}\in{M_{2\times2}}(\mathbb{R})$ and
$b_j\neq0$ for any $j\in\{1,2\}$.
The \emph{half-planed Hilbert transforms} related to the  LCT (for short,
HLCHT) $H_x^{A_1}$ and $H_y^{A_2}$ are defined by setting,
for any $f \in \mathscr{S}
({\mathbb{R}^2})$ and $x,y\in \mathbb{R}$,
\begin{equation*}
H_x^{A_1}\left( f \right)\left( {x,y} \right) := \frac{e^{\frac
{-id_1x^2}{2b_1}}}{\pi}{\rm p}.{\rm v}.\int_{\mathbb{R}}\frac
{f( \xi,y)e^{\frac{-ia_1\xi ^2}{2b_1}}}{x-\xi }\,d\xi
\end{equation*}
and
\begin{equation*}
H_y^{A_2}\left( f \right)\left( {x,y} \right) := \frac{e^{\frac
{-id_2y^2}{2b_2}}}{\pi}{\rm p}.{\rm v}.\int_{\mathbb{R}}
\frac{f(x,\eta)e^{\frac{-ia_2\eta ^2}{2b_2}}}{y-\eta}\,d\eta,
\end{equation*}
where $H_x^{A_1}$ is the HLCHT along $X$ axis and
$H_y^{A_2}$ is the HLCHT  along $Y$ axis.
\end{definition}

Fu et al. \cite{fglwy23} further extended the concept of fractional
Hilbert transforms to higher dimensions, referred to as the fractional
Riesz transforms, and successfully applied this extension to image
edge detection. Compared with the classical Riesz transform
used in edge detection,  the fractional Riesz transform not only extracts
global edge information from images but also flexibly captures local
edge information in any directions by adjusting the fractional
parameters.

\begin{definition}\label{def-frt}(see \cite{fglwy23})
For any $j\in\{1,\ldots, n\}$  and  $\boldsymbol{\alpha}
:=(\alpha_1,\ldots,\alpha_n)\in [0,2\pi)^n$ with
$\alpha_k\notin\{0,\pi\}$  for any $k\in\{1,\ldots,n\}$,
the $j$th \emph{fractional Riesz transform}
 $R_j^{\boldsymbol{\alpha}}(f)$ of
$f\in \mathscr{S}(\mathbb{R}^n)$ is defined by setting,
for any $\boldsymbol{x}\in \mathbb{R}^n$,
\begin{align*}
R_j^{\boldsymbol{\alpha}}(f)(\boldsymbol{x}):=\widetilde{c}_n\
{\rm p}.{\rm v}.\ e_{-\boldsymbol{\alpha}}(\boldsymbol{x})
\int_{\mathbb{R}^n}\frac{x_j-y_j}{|\boldsymbol{x}
-\boldsymbol{y}|^{n+1}}f(\boldsymbol{y})e_{\boldsymbol
{\alpha}}(\boldsymbol{y})\,d\boldsymbol{y},
\end{align*}
where $\widetilde{c}_n:=\Gamma(\frac{n+1}{2})/\pi^{\frac{n+1}{2}}$, 
$e_{\boldsymbol{\alpha}}(\boldsymbol{x}):=e^{i\sum\nolimits
_{k = 1}^n\frac{\cot({\alpha _k})}{2}{x_k^2}}$, and
$e_{-\boldsymbol{\alpha}}(\boldsymbol{x}):=e^{-i\sum\nolimits
_{k = 1}^n\frac{\cot({\alpha _k})}{2}{x_k^2}}$.
\end{definition}

In other words, the fractional Riesz transform can also be viewed
as a fractional convolution operator.  Fu et al. \cite{fglwy23}
proved that the fractional Riesz transform is a fractional multiplier,
demonstrating that the fractional Riesz transform is entirely
equivalent to the composition of the FrFT, the fractional multiplier,
and the inverse FrFT.

Based on the LCHT and the fractional Riesz transform,
we introduce the $j$th linear canonical Riesz transform 
as follows.

\begin{definition}\label{Rieszdef1}
Let  $\boldsymbol{A}:=(A_1,\ldots,A_n)$
with $A_k:=\begin{bmatrix}
{a_k}&{b_k}\\
{c_k}&{d_k}
\end{bmatrix}\in{M_{2\times2}}(\mathbb{R})$
and  $b_k\ne0$ for any $k\in\{1,\ldots,n\}$,
and let
$\boldsymbol{a}:=(a_1,\ldots,a_n)$,
$\boldsymbol{b}:=(b_1,\ldots,b_n)$,
and $\boldsymbol{d}:=(d_1,\ldots,d_n)$.
For any given $j\in \{1,\ldots,n\}$, the $j$th \emph
{linear canonical Riesz transform} (for short, LCRT)
$R_j^{\boldsymbol{A}}(f)$ of $f \in \mathscr{S}
(\mathbb{R}^n)$ is defined  by setting, for any
 $\boldsymbol{x}\in \mathbb{R}^n$,
\begin{equation}\label{eqRiesz1}
R_j^{\boldsymbol A}(f)(\boldsymbol x):={\widetilde{c}_n}
{e}_{\boldsymbol b,-\boldsymbol d}(\boldsymbol x)\,{\rm p}.
{\rm v}.\int_{\mathbb{R}^n} \frac{({x_j}-{y_j})
f(\boldsymbol y) {e}_{\boldsymbol b,\boldsymbol a}
(\boldsymbol y)}{| \boldsymbol x-\boldsymbol y
|^{n+1}}\,d\boldsymbol{y},
\end{equation}
where $\widetilde{c}_n:= \frac{\Gamma (\frac{n + 1}{2})}
{\pi ^{\frac{n + 1}{2}}}$,  ${e}_{\boldsymbol b,
-\boldsymbol d}$  is the same as in \eqref{1eq}
with $\boldsymbol{a}$ and $\boldsymbol{b}$ replaced,
respectively, by $\boldsymbol b$ and $-\boldsymbol d$
and  ${e}_{\boldsymbol b,\boldsymbol a}$
is the same as in \eqref{1eq} with 
exchanging the position of  $\boldsymbol a$ and
$\boldsymbol{b}$.

\end{definition}

\begin{remark}\label{rem3.5}
\begin{enumerate}[(i)]
\item
Let $k,j\in\{1,\ldots,n\}$. If $\boldsymbol{A}:=({{A_1},\ldots,{A_n}})$
with $A_k:= \begin{bmatrix}	
 \cos{\alpha_k}&\sin{\alpha _k}\\
-\sin {\alpha_k}&\cos{\alpha _k}
\end{bmatrix}$ and both $\alpha_k\in(0,2\pi)$ and
$\alpha_k\neq\pi$,
then the LCRT $R_j^{\boldsymbol A}$ in this case
becomes the fractional Riesz transform
$R_j^{\boldsymbol{\alpha}}$ in \cite[Definition 1.3]{fglwy23}.
If $\boldsymbol{A}:=({{A_1},\ldots,{A_n}})$
with $A_k:=
\begin{bmatrix}
0&1\\{-1}&0
\end{bmatrix}$ for all $k$,  then the LCRT
$R_j^{\boldsymbol A}$ in this case becomes
 the classical Riesz transform $R_j$ in
 \cite[Definition 4.1.13]{g20141}.
\item
If $n=1$, then the LCRT becomes the LCHT
in this case in \cite[(7), Section 2]{ltw06}.	
\end{enumerate}
\end{remark}

Now, we prove
that the LCRT is a linear canonical multiplier, which
demonstrates that the LCRT is equivalent
to the composition of the LCT and a multiplier.

\begin{theorem}\label{LCRT-rf}
Let  $k,j\in\{1,\ldots,n\}$ and $\boldsymbol A:=(A_1,\ldots ,A_n)$
with $A_k:=\begin{bmatrix}
{a_k}&{b_k}\\
{c_k}&{d_k}
\end{bmatrix}\in{M_{2\times2}}(\mathbb{R})$
and both  $b_k\ne0$ and $a_k=d_k$ for all $k$, and let
$\boldsymbol{a}:=(a_1,\ldots,a_n)$,
$\boldsymbol{b}:=(b_1,\ldots,b_n)$,
and $\boldsymbol{d}:=(d_1,\ldots,d_n)$.
From the perspective of the {\rm LCT}, the $j$th {\rm LCRT}
$R_{j}^{\boldsymbol A}$ is given
 by the multiplication by the function $\frac
{-i\omega_j}{b_j}/{|\frac{\boldsymbol{\omega}}
{\boldsymbol b} |}$.
That is, for any $f \in\mathscr{S}
(\mathbb{R}^n)$  and $\boldsymbol{\omega}\in
\mathbb{R}^n$,  one has
\begin{equation}\label{eqRieszM}	
\mathscr{L}_{\boldsymbol A}\left[R_{j}^{\boldsymbol A}
(f)\right](\boldsymbol{\omega}) = -i \frac{\frac
{\omega_j}{b_j}}{|\frac{\boldsymbol{\omega}}
{\boldsymbol b}|} \mathscr{L}_{\boldsymbol A}
(f)(\boldsymbol{\omega}),
\end{equation}
where $\frac{\boldsymbol{\omega}}{\boldsymbol b}:=
(\frac{\omega_1}{b_1},\ldots,\frac{\omega_n}{b_n})$.
\end{theorem}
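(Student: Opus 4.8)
The plan is to reduce everything to the classical Riesz transform, using the decomposition \eqref{equation1.2} of the LCT together with the well-known fact that the $j$th (classical) Riesz transform $R_j$ is the Fourier multiplier with symbol $-i\xi_j/|\boldsymbol\xi|$ (see, e.g., \cite[Section 4.1]{g20141}). First I would recognize the LCRT in \eqref{eqRiesz1} as a conjugated classical Riesz transform: with $g:=e_{\boldsymbol b,\boldsymbol a}f$, the principal-value integral in \eqref{eqRiesz1} is exactly $R_j(g)(\boldsymbol x)$, so that $R_j^{\boldsymbol A}(f)=e_{\boldsymbol b,-\boldsymbol d}\,R_j(e_{\boldsymbol b,\boldsymbol a}f)$. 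Before proceeding, I would record the routine observation that, since $f\in\mathscr S(\mathbb R^n)$ and the chirp $e_{\boldsymbol b,\boldsymbol a}$ is smooth with all derivatives of at most polynomial growth, the product $e_{\boldsymbol b,\boldsymbol a}f$ again lies in $\mathscr S(\mathbb R^n)$; hence $R_j(e_{\boldsymbol b,\boldsymbol a}f)$ is a well-defined smooth function belonging to $L^1(\mathbb R^n)\cap L^2(\mathbb R^n)$, so that all the transforms written below are meaningful.

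The crucial step is then to multiply $R_j^{\boldsymbol A}(f)$ by $e_{\boldsymbol b,\boldsymbol a}$ and invoke the hypothesis $a_k=d_k$. By Definition \ref{cf},
$$e_{\boldsymbol b,\boldsymbol a}(\boldsymbol x)\,e_{\boldsymbol b,-\boldsymbol d}(\boldsymbol x)=e^{i\sum_{k=1}^n\frac{a_k-d_k}{2b_k}x_k^2}=1,\qquad\boldsymbol x\in\mathbb R^n,$$
so $e_{\boldsymbol b,\boldsymbol a}\,R_j^{\boldsymbol A}(f)=R_j(e_{\boldsymbol b,\boldsymbol a}f)$. Substituting this into the decomposition \eqref{equation1.2} written for $R_j^{\boldsymbol A}(f)$ in place of $f$ — legitimate because the only object that formula requires is $\mathscr F(e_{\boldsymbol b,\boldsymbol a}R_j^{\boldsymbol A}(f))=\mathscr F(R_j(e_{\boldsymbol b,\boldsymbol a}f))$, the Fourier transform of an $L^1\cap L^2$ function — yields
$$\mathscr L_{\boldsymbol A}\big[R_j^{\boldsymbol A}(f)\big](\boldsymbol\omega)=(\sqrt{2\pi})^nC_{\boldsymbol A}\,e_{\boldsymbol b,\boldsymbol d}(\boldsymbol\omega)\,\mathscr F\big(R_j(e_{\boldsymbol b,\boldsymbol a}f)\big)\Big(\tfrac{\boldsymbol\omega}{\boldsymbol b}\Big)$$
for every $\boldsymbol\omega\in\mathbb R^n$.

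To finish, I would apply the Riesz multiplier identity $\mathscr F(R_j h)(\boldsymbol\xi)=-i\frac{\xi_j}{|\boldsymbol\xi|}\widehat h(\boldsymbol\xi)$ with $h=e_{\boldsymbol b,\boldsymbol a}f$ and $\boldsymbol\xi=\boldsymbol\omega/\boldsymbol b$; since the $j$th coordinate of $\boldsymbol\omega/\boldsymbol b$ is $\omega_j/b_j$, this pulls out precisely the factor $-i\,(\omega_j/b_j)/|\boldsymbol\omega/\boldsymbol b|$ in front of $(\sqrt{2\pi})^nC_{\boldsymbol A}\,e_{\boldsymbol b,\boldsymbol d}(\boldsymbol\omega)\,\mathscr F(e_{\boldsymbol b,\boldsymbol a}f)(\boldsymbol\omega/\boldsymbol b)$, and by \eqref{equation1.2} this last quantity equals $\mathscr L_{\boldsymbol A}(f)(\boldsymbol\omega)$. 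This is exactly \eqref{eqRieszM}. I expect the main (and rather modest) obstacle to be purely a matter of bookkeeping: tracking the three chirp factors so as to see that $e_{\boldsymbol b,\boldsymbol a}$ and $e_{\boldsymbol b,-\boldsymbol d}$ cancel once $a_k=d_k$, and checking that both \eqref{equation1.2} and the classical Riesz multiplier formula survive the passage from $\mathscr S(\mathbb R^n)$ to the merely $L^1\cap L^2$ (smooth) function $R_j(e_{\boldsymbol b,\boldsymbol a}f)$; once these are in place, \eqref{eqRieszM} follows by direct substitution.
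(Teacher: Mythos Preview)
Your argument is correct and is genuinely different from the paper's. The paper proceeds by a direct computation: it writes out $\mathscr L_{\boldsymbol A}[R_j^{\boldsymbol A}(f)](\boldsymbol\omega)$ as an iterated integral, isolates the inner principal-value integral $I=\widetilde c_n\,\lim_{\varepsilon\to0^+}\int_{|\boldsymbol t-\boldsymbol x|\ge\varepsilon}\frac{(t_j-x_j)e_{-\boldsymbol b}(\boldsymbol t,\boldsymbol\omega)}{|\boldsymbol t-\boldsymbol x|^{n+1}}\,d\boldsymbol t$, and evaluates $I$ from scratch by a change of variables and a spherical-coordinate calculation, thereby essentially rederiving the classical Riesz multiplier identity inside the LCT kernel. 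Your route is more structural: you factor $R_j^{\boldsymbol A}(f)=e_{\boldsymbol b,-\boldsymbol d}\,R_j(e_{\boldsymbol b,\boldsymbol a}f)$, use $a_k=d_k$ to collapse the two outer chirps, feed this into the decomposition \eqref{equation1.2}, and then invoke the known multiplier formula for the classical $R_j$. Your approach makes transparent \emph{why} the hypothesis $a_k=d_k$ is needed (it is exactly what makes $e_{\boldsymbol b,\boldsymbol a}e_{\boldsymbol b,-\boldsymbol d}\equiv1$), whereas in the paper this hypothesis is used silently when the kernel $K_{\boldsymbol A}$ is unpacked. The paper's computation, on the other hand, is self-contained and does not appeal to any outside multiplier result. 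One small caveat: $R_j(e_{\boldsymbol b,\boldsymbol a}f)$ is smooth and lies in every $L^p$ for $1<p<\infty$, but it need not be in $L^1(\mathbb R^n)$; this does not affect your argument, since the Riesz multiplier identity and \eqref{equation1.2} are both valid in the $L^2$ (or tempered-distribution) sense, but you should phrase the justification accordingly rather than claiming $L^1\cap L^2$.
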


\begin{proof} From the definitions of LCTs and LCRTs,
it follows that, for any $f \in\mathscr{S}
(\mathbb{R}^n)$  and $\boldsymbol{\omega}\in
\mathbb{R}^n$,
\begin{align}\label{eq3.3}	
&\mathscr{L}_{\boldsymbol A}\left[R_j^{\boldsymbol{A}}
\left( f\right)\right](\boldsymbol{\omega})\\
&\quad={\widetilde{c}_n}\int_{\mathbb{R}^n}
{e}_{-\boldsymbol b,\boldsymbol d}
\left(\boldsymbol t \right)\lim_{\varepsilon\rightarrow0}
\int_{|\boldsymbol{t}-\boldsymbol{x}|\geq\varepsilon}
{\frac{( {t_j}-{x_j})f(\boldsymbol x)
{e}_{\boldsymbol b,\boldsymbol a}\left( \boldsymbol x
\right)}{{|{\boldsymbol{t}-\boldsymbol{x}}|}
^{n+1}}\,d\boldsymbol{x}}K_{\boldsymbol A}
(\boldsymbol{t},\boldsymbol{\omega})\,d\boldsymbol{t}\nonumber\\
&\quad=C_{\boldsymbol A}\int_{\mathbb{R}^n}
{f\left( \boldsymbol x\right)} {e}_{\boldsymbol b,
\boldsymbol a}\left( \boldsymbol{\omega}\right)
{e}_{\boldsymbol b,\boldsymbol a}\left(\boldsymbol x \right)
\left[ \frac{\Gamma \left( \frac{n+1}{2}\right)}
{\pi ^{\frac{n+1}{2}}}\lim_{\varepsilon\rightarrow0}
\int_{|\boldsymbol{t}-\boldsymbol{x}|\geq\varepsilon}
{\frac{(t_j-x_j){e_{-\boldsymbol b}}
\left( \boldsymbol{t} ,\boldsymbol{\omega} \right)}
{{|\boldsymbol{t}-\boldsymbol{x}|}^
{n+1}}\,d\boldsymbol{t}}
\right]\,d\boldsymbol{x},\nonumber
\end{align}
where  ${e}_{\boldsymbol b,
\boldsymbol a}$ is the same as in \eqref{1eq}
with exchanging the position of  $\boldsymbol a$ and
$\boldsymbol{b}$
and $e_{-\boldsymbol b}$
is the same as in \eqref{2eq}
with $\boldsymbol a$ replaced by $-\boldsymbol{b}$.
Let $$I :=\frac{\Gamma \left(\frac{n + 1}{2} \right)}
{\pi ^{\frac{n+1}{2}}}\lim_{\varepsilon\rightarrow0^{+}}
\int_{|\boldsymbol{t}-\boldsymbol{x}|\geq\varepsilon}
\frac{(t_j-x_j)e_{-\boldsymbol{b}}({\boldsymbol{t},
\boldsymbol{\omega}})}{{| \boldsymbol{t}-\boldsymbol{x}|}
^{n+1}}d\boldsymbol{t},$$
here and thereafter, $\varepsilon\rightarrow0^{+}$
means $\varepsilon\in (0,c_0)$, with $c_0$ being some positive 
constant, and $\varepsilon\rightarrow0$.
By a change of variables and the spherical coordinate transformation,
we conclude that
\begin{align*}	
I & = -\frac{\Gamma (\frac{n+1}{2})}{\pi^{\frac{n+1}
{2}}}\lim_{\varepsilon\rightarrow0^{+}}
\int_{|\boldsymbol{t}-\boldsymbol{x}|\geq\varepsilon}\frac{(x_j-t_j)e_{-
\boldsymbol b}(\boldsymbol{t},\boldsymbol{\omega})}
{| \boldsymbol{x}-\boldsymbol{t}|^{n + 1}}
 \,d\boldsymbol{t}\\
&=-\frac{\Gamma(\frac{n+1}{2})}
{\pi^{\frac{n +1}{2}}}\lim_{\varepsilon\rightarrow0^{+}}
\int_{|\boldsymbol{y}|\geq\varepsilon}
 \frac{y_j e^{-i\sum\limits_{j = 1}^n
\frac{(x_j-y_j)\omega _j}{b_j}}}
{{|\boldsymbol y |}^{n+1}}\,d\boldsymbol{y}\\
& = -\frac{\Gamma(\frac{n+1}{2})}
{\pi^{\frac{n+1}{2}}}e^{-i\sum\limits_{j = 1}^n
{\frac{{x_j}{\omega _j}}{b_j}}}
\lim_{\varepsilon\rightarrow0}\int_{s^{n-1}}\int_
{\varepsilon\leq r\leq\frac{1}{\varepsilon}}{ e^{i\sum\limits_{j = 1}^n
{\frac{{r{\theta _j}{\omega _j}}}{b_j}}}\frac{1}{r}}
{\theta _j}\,dr\,d\boldsymbol{\theta} \\
&=-\frac{\Gamma ({\frac{n+1}{2}})}
{\pi^{\frac{n + 1}{2}}}e^{-i\sum\limits_{j = 1}
^n{\frac{x_j{\omega _j}}{b_j}}}\int_{S^{n-1}}
{\int_0^\infty  i\frac{\sin ( \sum\limits_{j = 1}^n
{\frac{r{\theta _j}{\omega _j}}{b_j}})}{r}}
{\theta _j}\,dr\,d\boldsymbol{\theta} \\
& = -\frac{\Gamma( \frac{n + 1}{2})}
{\pi ^{\frac{n + 1}{2}}}e^{-i\sum\limits_{j = 1}^n
{\frac{x_j{\omega _j}}{b_j}} }\int_{S^{n-1}}
{\int_0^\infty  i \frac{^{\sin( r\sum\limits_{j = 1}
^n {\frac{\theta _j \omega _j}{b_j}})}}
{r\sum\limits_{j = 1}^n \frac{\theta _j \omega_j}
{b_j}}}{\theta _j}d\left( \sum\limits_{j = 1}^n
{\frac{\theta _j \omega_j}{b_j}}r\right)
\,d\boldsymbol{\theta} \\
&=-i\frac{\Gamma ({\frac{n+1}
{2}})}{\pi ^{\frac{n+1}{2}}}e^{-i\sum\limits_{j = 1}^n \frac{x_j{\omega _j}}
{b_j}}\frac{1}{2}\pi\frac{2\pi ^{\frac{n-1}{2}}}
{\Gamma \left( \frac{n+1}{2} \right) }\frac{\frac
{\omega_j}{b_j}}{\left| {\frac{\boldsymbol{\omega }}
{\boldsymbol{b}}} \right|}=-ie^{-i\sum\limits_{j = 1}^n \frac{x_j{\omega _j}}
{b_j}}\frac{\frac
{\omega_j}{b_j}}{| {\frac{\boldsymbol{\omega }}
{\boldsymbol{b}}}|}.
\end{align*}
Substituting the above formula into \eqref{eq3.3}, we then obtain
\begin{align*}	
\mathscr{L}_{\boldsymbol A}\left[R_j^{\boldsymbol A}
\left(f\right)  \right] (\boldsymbol{\omega} )
& =-iC_{\boldsymbol A}\int_{\mathbb{R}^n}f
(\boldsymbol x) e^{i\sum\limits_{j = 1}^n ({\frac
{a_j}{2b_j}\omega _j^2-\frac{x_j{\omega _j}}{b_j}
+ \frac{a_j}{{2b_j}}x_j^2}) }\frac{\frac{\omega_j}
{b_j}}{\left|{\boldsymbol{\frac{\omega }{b}}} \right|}
\,d\boldsymbol{x}= -i\frac{{\frac{\omega _j}{b_j}}}{|
{\frac{\boldsymbol{\omega}}{\boldsymbol{b}}}|}
{\mathscr{L}_{\boldsymbol A}}(f)(\boldsymbol{\omega} ).
\end{align*}
This  finishes the proof of Theorem \ref{LCRT-rf}.
\end{proof}

\begin{remark}
\begin{enumerate}[(i)]
\item	
When $n=1$,  Theorem \ref{LCRT-rf} in this case  coincides with
 \cite[Theorem 2]{ltw06} which is on the LCHT.
\item
Let $j\in\{1,\ldots,n\}$. When $A_j:= \begin{bmatrix}	
 {\cos{\alpha_j}}&{\sin{\alpha _j}}\\
{-\sin {\alpha_j}}&{\cos {\alpha _j}}
\end{bmatrix}$ with $\alpha_j\in (0,2\pi)$
and $\alpha_j\neq \pi $,
Theorem \ref{LCRT-rf} in this case reduces to
\cite[Theorem 2.1]{fglwy23} which is on the fractional Riesz transform.
\end{enumerate}
\end{remark}

 The following lemma gives
 the inverse LCT theorem, which is precisely
 \cite[Reversible property (13)]{dp1999}.

\begin{lemma}\label{lem-ilct}
Let $\boldsymbol{A}:=(A_1,\ldots ,A_n)$
with $A_k:=\begin{bmatrix}
{a_k}&{b_k}\\
{c_k}&{d_k}
\end{bmatrix}\in{M_{2 \times 2}}
(\mathbb{R})$ for any $k\in\{1,\ldots,n\}$.
For any $f\in\mathscr{S}(\mathbb{R}^n)$ and
$\boldsymbol{x}\in \mathbb{R}^n$,
$({\mathscr{L}_{\boldsymbol{A}}})^{-1}
{\mathscr{L}_{\boldsymbol{A}}}(f)
(\boldsymbol x)=f
(\boldsymbol x).$
\end{lemma}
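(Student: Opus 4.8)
The plan is to reduce to dimension one and then invoke the classical Fourier inversion theorem. Because the kernels factor as $K_{\boldsymbol{A}}(\boldsymbol{x},\boldsymbol{u})=\prod_{k=1}^{n}K_{A_k}(x_k,u_k)$ and likewise for $\boldsymbol{A}^{-1}$, Fubini's theorem lets one peel off one coordinate at a time, so it suffices to prove $(\mathscr{L}_{A})^{-1}\mathscr{L}_{A}(f)=f$ when $n=1$ and $A=\begin{bmatrix}a&b\\c&d\end{bmatrix}\in M_{2\times2}(\mathbb{R})$; recall that then $A^{-1}=\begin{bmatrix}d&-b\\-c&a\end{bmatrix}$. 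The degenerate case $b=0$ is an immediate computation: there $K_{A}(y,u)=\sqrt{d}\,e^{\frac{i}{2}cd u^{2}}\delta(y-du)$ and $K_{A^{-1}}(u,x)=\sqrt{a}\,e^{-\frac{i}{2}ca x^{2}}\delta(u-ax)$, and carrying out the $u$-integral while using $ad=1$ (since $\det A=1$ and $b=0$) collapses the two Dirac measures and the two phases to give $\int_{\mathbb{R}}K_{A}(y,u)K_{A^{-1}}(u,x)\,du=\delta(y-x)$, whence the claim.

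For $b\ne0$ I would use the decomposition \eqref{equation1.2} from Remark \ref{rem-lct}: in one dimension it reads $\mathscr{L}_{A}(f)(x)=\sqrt{2\pi}\,C_{A}\,e^{i\frac{d}{2b}x^{2}}\,\mathscr{F}\bigl(e^{i\frac{a}{2b}(\cdot)^{2}}f\bigr)(x/b)$. Applying the same decomposition to $\mathscr{L}_{A^{-1}}$, whose parameters $(a',b',c',d')=(d,-b,-c,a)$ turn the leading chirp into $e^{-i\frac{a}{2b}x^{2}}$, the inner chirp into $e^{-i\frac{d}{2b}(\cdot)^{2}}$, and the scaling $x\mapsto x/b'$ into $x\mapsto -x/b$, one finds that the leading chirp of $\mathscr{L}_{A^{-1}}$ exactly cancels the trailing chirp $e^{i\frac{d}{2b}x^{2}}$ carried by $g:=\mathscr{L}_{A}(f)$, and that the inner chirp $e^{-i\frac{d}{2b}(\cdot)^{2}}$ cancels the corresponding factor inside $g$. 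After a change of variables in the scaling step and an application of $\mathscr{F}\circ\mathscr{F}=(\text{reflection})$ — i.e. the Fourier inversion theorem on $\mathscr{S}(\mathbb{R})$ — the composition reduces to $e^{-i\frac{a}{2b}x^{2}}\cdot e^{i\frac{a}{2b}x^{2}}f(x)=f(x)$, up to a scalar prefactor.

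Verifying that the prefactor is exactly $1$ is the only delicate point, and it is where I expect the bookkeeping to be fussiest. Tracking the two factors $\sqrt{2\pi}$, the two normalizing constants $C_{A}=\tfrac{1}{\sqrt{i2\pi b}}$ and $C_{A^{-1}}=\tfrac{1}{\sqrt{-i2\pi b}}$, and the Jacobian $|b|$ from the change of variables in the scaling step, one gets $2\pi\cdot C_{A}C_{A^{-1}}\cdot b$ (with the sign of $b$ absorbed correctly); since $i2\pi b$ and $-i2\pi b$ are complex conjugates, the principal branches satisfy $\sqrt{i2\pi b}\,\sqrt{-i2\pi b}=2\pi|b|$, so $C_{A}C_{A^{-1}}=\tfrac{1}{2\pi|b|}$ and the prefactor is indeed $1$. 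The main obstacle is thus purely the branch-of-square-root and sign-of-$b$ accounting, not anything conceptual; everything substantive is the Fourier inversion theorem, already available at the Schwartz-class level.

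An alternative, self-contained route avoids Remark \ref{rem-lct} altogether: write $(\mathscr{L}_{A})^{-1}\mathscr{L}_{A}(f)(x)=\int_{\mathbb{R}}f(y)\left[\int_{\mathbb{R}}K_{A}(y,u)K_{A^{-1}}(u,x)\,du\right]dy$ and show the inner (non-absolutely-convergent) $u$-integral equals $\delta(y-x)$ by inserting a Gaussian regularizer $e^{-\varepsilon u^{2}}$, evaluating the resulting Gaussian integral explicitly, and letting $\varepsilon\to0^{+}$ to obtain a suitably scaled Dirac measure; the quadratic phases in $y$ and $x$ cancel because $A$ has determinant one. This route carries the same constant-tracking burden. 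In any case the statement is precisely \cite[Reversible property (13)]{dp1999}, to which one may simply appeal.
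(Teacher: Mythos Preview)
The paper does not prove this lemma at all: it simply states it and cites \cite[Reversible property (13)]{dp1999}, which you also note at the end of your proposal. Your argument --- reduction to $n=1$ via the tensor structure of the kernel, then cancellation of the chirp factors through Remark~\ref{rem-lct} and the Fourier inversion theorem, with careful tracking of the constants $C_A C_{A^{-1}}|b|$ --- is correct and goes well beyond what the paper supplies. One small refinement: in the scaling step the Jacobian is $|b|$ rather than $b$ (you implicitly use this when you write $\sqrt{i2\pi b}\,\sqrt{-i2\pi b}=2\pi|b|$, but the text says ``Jacobian $b$''), and in the degenerate case $b=0$ your formal Dirac computation is fine but strictly speaking one should verify the sign in $\sqrt{d}\sqrt{a}$ when $a,d<0$; these are exactly the branch-cut subtleties you already flag as the only delicate points.
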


Let $k,j\in\{1,\ldots,n\}$ and $\boldsymbol A:=(A_1,\ldots ,A_n)$
with $A_k:=\begin{bmatrix}
{a_k}&{b_k}\\
{c_k}&{d_k}
\end{bmatrix}\in{M_{2\times2}}(\mathbb{R})$
and both  $b_k\ne0$ and $a_k=d_k$ for all $k$.
By Lemma \ref{lem-ilct} and Theorem \ref{LCRT-rf},
the $j$th LCRT $R_j^{\boldsymbol A}$ can be rewritten as,
for any $f\in\mathscr{S}(\mathbb{R}^n)$
and  $\boldsymbol{x}\in \mathbb{R}^n$,
\begin{equation}\label{eqRiesz2}
R_j^{\boldsymbol A}f(\boldsymbol x) = \mathscr{L}
_{{\boldsymbol A}^{-1}}\left\{-i\frac{\frac{\omega _j}
{b_j}}{| {\frac{\boldsymbol{\omega}}
{\boldsymbol b}}|}{\mathscr{L}_{\boldsymbol A}}
\left[f(\boldsymbol{\omega})\right]\right\}(\boldsymbol x).
\end{equation}
For simplicity of presentation, for any  $\boldsymbol{\omega}
\in \mathbb{R}^n$,  let  $m^j_{\boldsymbol A}(\boldsymbol{\omega}):=
-i\frac{\frac{\omega _j}{b_j}}{| \frac{\boldsymbol
{\omega} }{\boldsymbol b}|}$. Based on \eqref{eqRiesz2},
we observe that the  LCRT  $R_j^{\boldsymbol A}f$ of  $f$
consists of the following three simpler operators
(as illustrated in Figure \ref{img2}):

\begin{enumerate}[(i)]
\item The LCT ${\mathscr{L}_{\boldsymbol A}}f$ of $f$,
that is, for any $\boldsymbol w\in \mathbb{R}^n$, $g(\boldsymbol w):=
{\mathscr{L}_{\boldsymbol A}}f(\boldsymbol w)$;
\item The multiplier $m^j_{\boldsymbol A}$,
 that is,  for any $\boldsymbol w\in \mathbb{R}^n$,
 $h(\boldsymbol w) := m^j_{\boldsymbol A}
 (\boldsymbol w)g(\boldsymbol w)$;
\item  The inverse LCT $\mathscr{L}_{{\boldsymbol A}^{-1}}h$
of  $h$, that is, for any $\boldsymbol x\in \mathbb{R}^n$,
 $R_j^{\boldsymbol A}f(\boldsymbol x) := \mathscr{L}
 _{{\boldsymbol A}^{-1}}h(\boldsymbol x)$.
\end{enumerate}

\begin{figure}[H]
\centering
\includegraphics[width=0.8\linewidth]{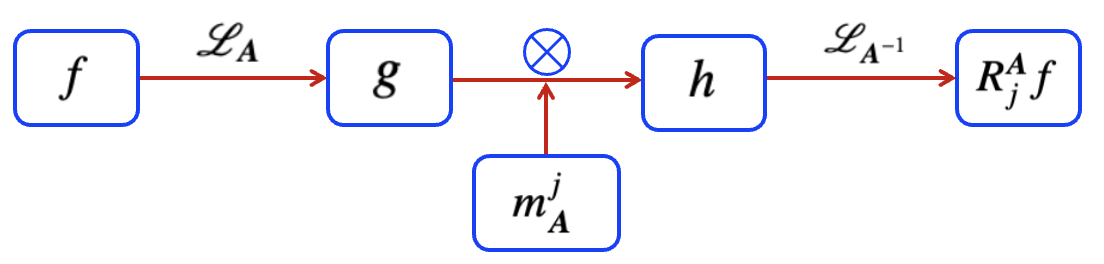}
\caption{The decomposition of the $j$th LCRT $R_j^{\boldsymbol A}f$.}
\label{img2}
\end{figure}

Next, we establish the boundedness of the LCRT on Lebesgue spaces.

\begin{theorem}\label{Lp}
Let $k,j\in\{1,\ldots,n\}$ and $\boldsymbol A:=(A_1,\ldots ,A_n)$
with $A_k:=\begin{bmatrix}
{a_k}&{b_k}\\
{c_k}&{d_k}
\end{bmatrix}\in{M_{2\times2}}(\mathbb{R})$
and both $b_k\ne0$ and $a_k=d_k$ for all $k$.
For any give   $p\in(1,\infty)$, $R_j^{\boldsymbol A}$
can be extended to a linear bounded operator on $L^p(\mathbb{R}^n)$
and, moreover, there exists a positive constant $C$ such that, for any
 $f \in L^p(\mathbb{R}^n)$,
 $\|R_j^{\boldsymbol A}(f)\|_{L^p(\mathbb{R}^n)}\leq C
 \|f\|_{L^p(\mathbb{R}^n)}.$	
\end{theorem}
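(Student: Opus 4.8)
The plan is to realize the $j$th LCRT as a conjugation of the classical $j$th Riesz transform by chirp multiplications, and then to invoke the classical $L^p$ theory. First I would note that, in \eqref{eqRiesz1}, the factor $e_{\boldsymbol b,-\boldsymbol d}(\boldsymbol x)$ does not depend on the integration variable $\boldsymbol y$, so it can be pulled outside the principal value; hence, for any $f\in\mathscr{S}(\mathbb{R}^n)$ and $\boldsymbol x\in\mathbb{R}^n$,
$$R_j^{\boldsymbol A}(f)(\boldsymbol x)=e_{\boldsymbol b,-\boldsymbol d}(\boldsymbol x)\,R_j\bigl(e_{\boldsymbol b,\boldsymbol a}f\bigr)(\boldsymbol x),$$
where $R_j$ is the classical $j$th Riesz transform of \cite[Definition 4.1.13]{g20141}, whose normalizing constant is precisely $\widetilde c_n=\Gamma(\frac{n+1}{2})/\pi^{\frac{n+1}{2}}$. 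Since $f\in\mathscr{S}(\mathbb{R}^n)$ and $|e_{\boldsymbol b,\boldsymbol a}|\equiv1$, the function $e_{\boldsymbol b,\boldsymbol a}f$ is smooth and belongs to $L^1(\mathbb{R}^n)\cap L^\infty(\mathbb{R}^n)$, so the principal value on the right-hand side is well defined in the usual sense, as an almost-everywhere and $L^p$ limit of truncated integrals.

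The next step uses two facts. By \eqref{1eq}, both $e_{\boldsymbol b,\boldsymbol a}$ and $e_{\boldsymbol b,-\boldsymbol d}$ have modulus $1$ at every point of $\mathbb{R}^n$, so multiplication by either of them is an isometry on $L^p(\mathbb{R}^n)$ for every $p\in(0,\infty]$; and the classical $j$th Riesz transform is bounded on $L^p(\mathbb{R}^n)$ for all $p\in(1,\infty)$, a standard consequence of the Calder\'{o}n--Zygmund theory (see, for instance, \cite{g20141}). Combining these with the displayed identity gives, for any $p\in(1,\infty)$ and any $f\in\mathscr{S}(\mathbb{R}^n)$,
$$\bigl\|R_j^{\boldsymbol A}(f)\bigr\|_{L^p(\mathbb{R}^n)}=\bigl\|R_j\bigl(e_{\boldsymbol b,\boldsymbol a}f\bigr)\bigr\|_{L^p(\mathbb{R}^n)}\le C\bigl\|e_{\boldsymbol b,\boldsymbol a}f\bigr\|_{L^p(\mathbb{R}^n)}=C\|f\|_{L^p(\mathbb{R}^n)},$$
with $C$ the $L^p(\mathbb{R}^n)$ operator norm of $R_j$, in particular independent of $f$. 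Since $\mathscr{S}(\mathbb{R}^n)$ is dense in $L^p(\mathbb{R}^n)$ when $p\in(1,\infty)$, this estimate would then let $R_j^{\boldsymbol A}$ be extended uniquely to a bounded linear operator on $L^p(\mathbb{R}^n)$ with the same bound, which is exactly the claim.

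I expect there to be essentially no serious obstacle here; the one point worth flagging is why the chirp decomposition is genuinely needed. One cannot prove Theorem \ref{Lp} by checking directly that $R_j^{\boldsymbol A}$ is a Calder\'{o}n--Zygmund operator: its kernel $\widetilde c_n\,\frac{x_j-y_j}{|\boldsymbol x-\boldsymbol y|^{n+1}}\,e_{\boldsymbol b,-\boldsymbol d}(\boldsymbol x)\,e_{\boldsymbol b,\boldsymbol a}(\boldsymbol y)$ does satisfy the size bound $\lesssim|\boldsymbol x-\boldsymbol y|^{-n}$, but differentiating it in $\boldsymbol x$ or $\boldsymbol y$ produces unbounded factors coming from the quadratic phases of the chirps, so the H\"ormander regularity condition fails. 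The decomposition above is precisely the device that sidesteps this obstruction: it absorbs the two chirps into $L^p$-isometric multiplications, leaving only the genuine Calder\'{o}n--Zygmund kernel of $R_j$ in the middle. It is also worth noting that the hypothesis $a_k=d_k$ is not actually used in this argument; it is needed only for the multiplier identity of Theorem \ref{LCRT-rf}, so the conclusion in fact persists whenever $b_k\ne0$ for all $k$.
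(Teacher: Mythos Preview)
Your proposal is correct and follows essentially the same approach as the paper: both arguments factor $R_j^{\boldsymbol A}$ as $e_{\boldsymbol b,-\boldsymbol d}\cdot R_j\cdot e_{\boldsymbol b,\boldsymbol a}$, use that the chirp multiplications are unimodular (hence $L^p$-isometries), invoke the classical $L^p$ boundedness of $R_j$, and extend from $\mathscr{S}(\mathbb{R}^n)$ to $L^p(\mathbb{R}^n)$ by density. Your additional remarks---that the kernel of $R_j^{\boldsymbol A}$ fails the H\"ormander regularity condition and that the hypothesis $a_k=d_k$ is not used here---are accurate and go slightly beyond what the paper records.
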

\begin{proof}
We first assume $f \in \mathscr{S}(\mathbb{R}^n)$.
By the integral definitions [see Definition \ref{Rieszdef1} and Remark \ref{rem3.5}(i)]
of both the Riesz transform and the LCRT, 
 we obtain, for any $\boldsymbol{x} \in{\mathbb{R}^n}$,
$R_j^{\boldsymbol A}(f)
(\boldsymbol x) = {e}_{\boldsymbol{b},-\boldsymbol{d}}
(\boldsymbol x)R_{j}({e}_{\boldsymbol{b},\boldsymbol{a}}f)(\boldsymbol x),$
where  ${e}_{\boldsymbol{b},-\boldsymbol{d}}$ 
is the same as in \eqref{1eq}
with $\boldsymbol{a}$ and $\boldsymbol{b}$ replaced,
respectively, by ${\boldsymbol b}$ and ${-\boldsymbol d}$,
${e}_{\boldsymbol{b},
\boldsymbol{a}}$ is the same as in \eqref{1eq}
with exchanging the position of  $\boldsymbol a$ and
$\boldsymbol{b}$, and $R_{j}$ is the same as in Remark
 \ref{rem3.5}(i).
Using this and the boundedness of the classical
 Riesz transform on $L^p(\mathbb{R}^n)$ (see, for example, \cite[Corollary 5.2.8]{g20141}),
we immediately obtain the boundedness of the LCRT 
$R_j^{\boldsymbol A}$ on $L^p(\mathbb{R}^n)\cap\mathscr{S}( \mathbb{R}^n)$.
 
Now, let $p\in (1,\infty)$ and $f\in L^p(\mathbb{R}^n)$. Since $\mathscr{S}( \mathbb{R}^n)$ 
is dense in $L^p(\mathbb{R}^n)$  (see, for example, \cite[p. 20]{sw1971}), 
we deduce that there exists a sequence 
$\{g_l\}_{l\in\mathbb{N}}$ in $\mathscr{S}(\mathbb{R}^n)$ such that $g_l \to f$ in 
$L^p(\mathbb{R}^n)$ as $l\to\infty$.  Thus, $\{g_l\}_{l\in\mathbb{N}}$ is a Cauchy
sequence in $L^p(\mathbb{R}^n)$. Note that $\{{e}_{\boldsymbol{b},\boldsymbol{a}}g_l\}_{l\in\mathbb{N}}$ is still
a sequence  in $\mathscr{S}(\mathbb{R}^n)$ and, by the just proved boundedness of 
$R_j^{\boldsymbol A}$ on $L^p(\mathbb{R}^n)\cap\mathscr{S}(\mathbb{R}^n)$,
we find that, for any $l,m\in\mathbb{N}$,
$$\left\|R_j^{\boldsymbol A}(g_l) - R_j^{\boldsymbol A}(g_m)\right\|_{L^p
(\mathbb{R}^n)}\lesssim \left\|g_l - g_m\right\|_{L^p(\mathbb{R}^n)},$$
which further implies that $\{R_j^{\boldsymbol A}(g_l)\}_{l\in\mathbb{N}}$
is also a Cauchy sequence in $L^p(\mathbb{R}^n)$. From this and the completeness of $L^p(\mathbb{R}^n)$, we infer that $\lim_{l\to\infty}R_j^{\boldsymbol A}(g_l)$
exists in $L^p(\mathbb{R}^n)$. Let $R_j^{\boldsymbol A}(f):=\lim_{l\to\infty}R_j^{\boldsymbol A}(g_l)$ in $L^p(\mathbb{R}^n)$.
Then it is easy to show that $R_j^{\boldsymbol A}(f)$ is independent of the choice of $\{g_l\}_{l\in\mathbb{N}}$ and hence is well-defined.
Moreover, by the Minkowski norm inequality of $L^p(\mathbb{R}^n)$ 
and the just proved boundedness of 
$R_j^{\boldsymbol A}$ on $L^p(\mathbb{R}^n)\cap\mathscr{S}(\mathbb{R}^n)$ 
again, we obtain 
$$\left\|R_j^{\boldsymbol A}(f)\right\|_{L^p(\mathbb{R}^n)}=
\lim_{l \to \infty} \left\|R_j^{\boldsymbol A}(g_l)\right\|_{L^p(\mathbb{R}^n)}
\lesssim \lim_{l \to \infty}  \|g_l\|_{L^p(\mathbb{R}^n)} =  \|f\|_{L^p(\mathbb{R}^n)},$$
which then completes the proof of Theorem \ref{Lp}.
\end{proof}

\section{Numerical Simulations\label{sec3}}

In this section, we perform LCRTs numerical simulations
of images by using the discrete LCT algorithm and the LCRT
multiplier (see, for example, the inspiring comprehensive monograph \cite{ppst23} for similar numerical simulations). Due to the fast algorithm of LCTs, the complexity of this algorithm is significantly reduced compared with the convolution type LCRT. Through a series of simulation experiments, we successfully demonstrate that LCRTs not only have the effect
of shifting terms, but can also attenuate the amplitude in the LCT
domain. Furthermore, we conduct numerical simulations of
images  by using LCHTs and conduct a complete comparison
of the numerical results between LCRTs and LCHTs, clarifying
the distinctions between them. This comparison not only
deepens our understanding of these two transforms, but also
lays the foundation for their further applications in signal
processing, thereby opening doors to new application directions.

\begin{figure}[H]
\centering
\subfigcapskip=-10pt
\subfigure[]{\includegraphics[width=0.48\linewidth]
{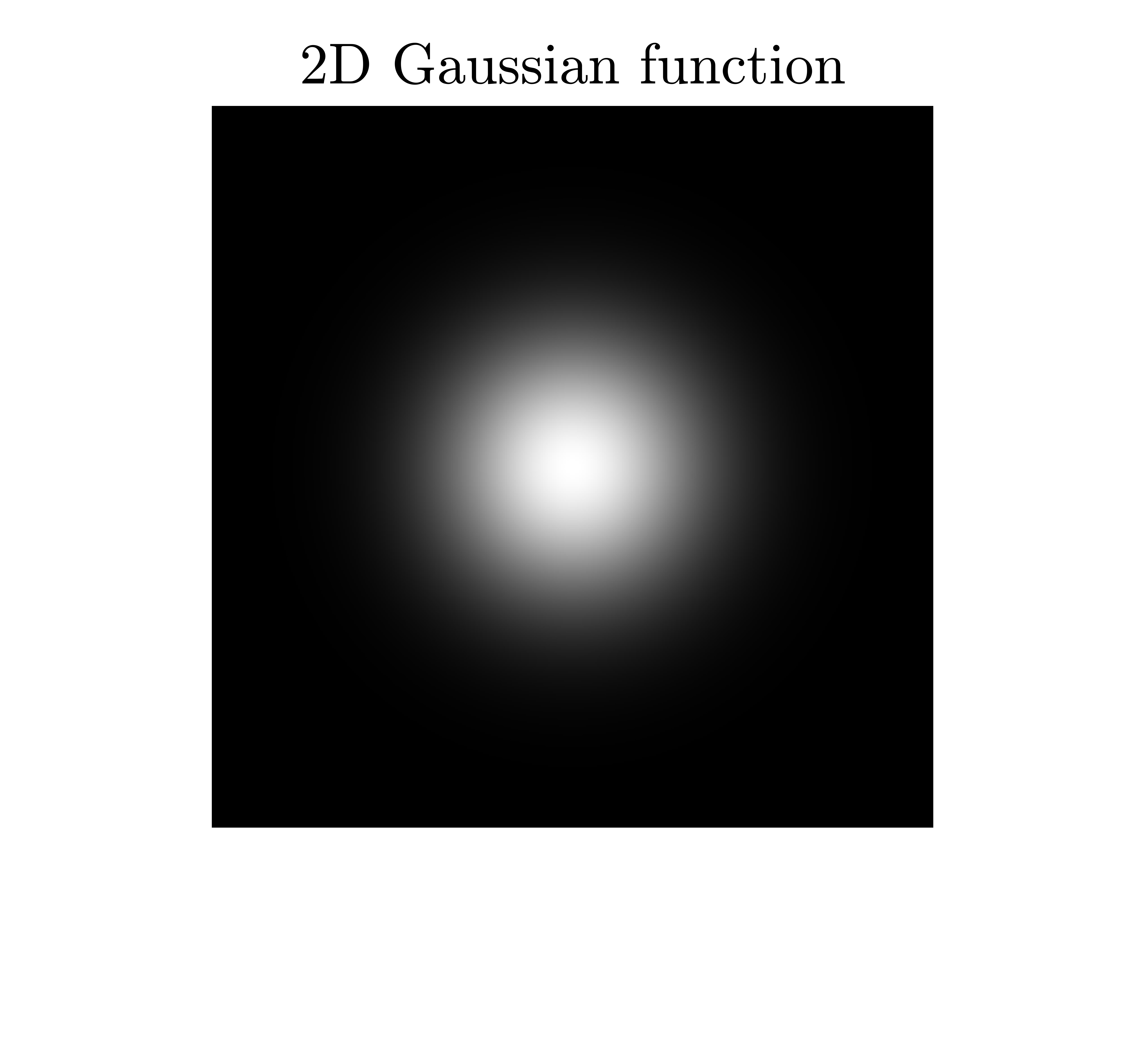}}\hspace{0.2cm}
\subfigure[]{\includegraphics[width=0.48\linewidth]
{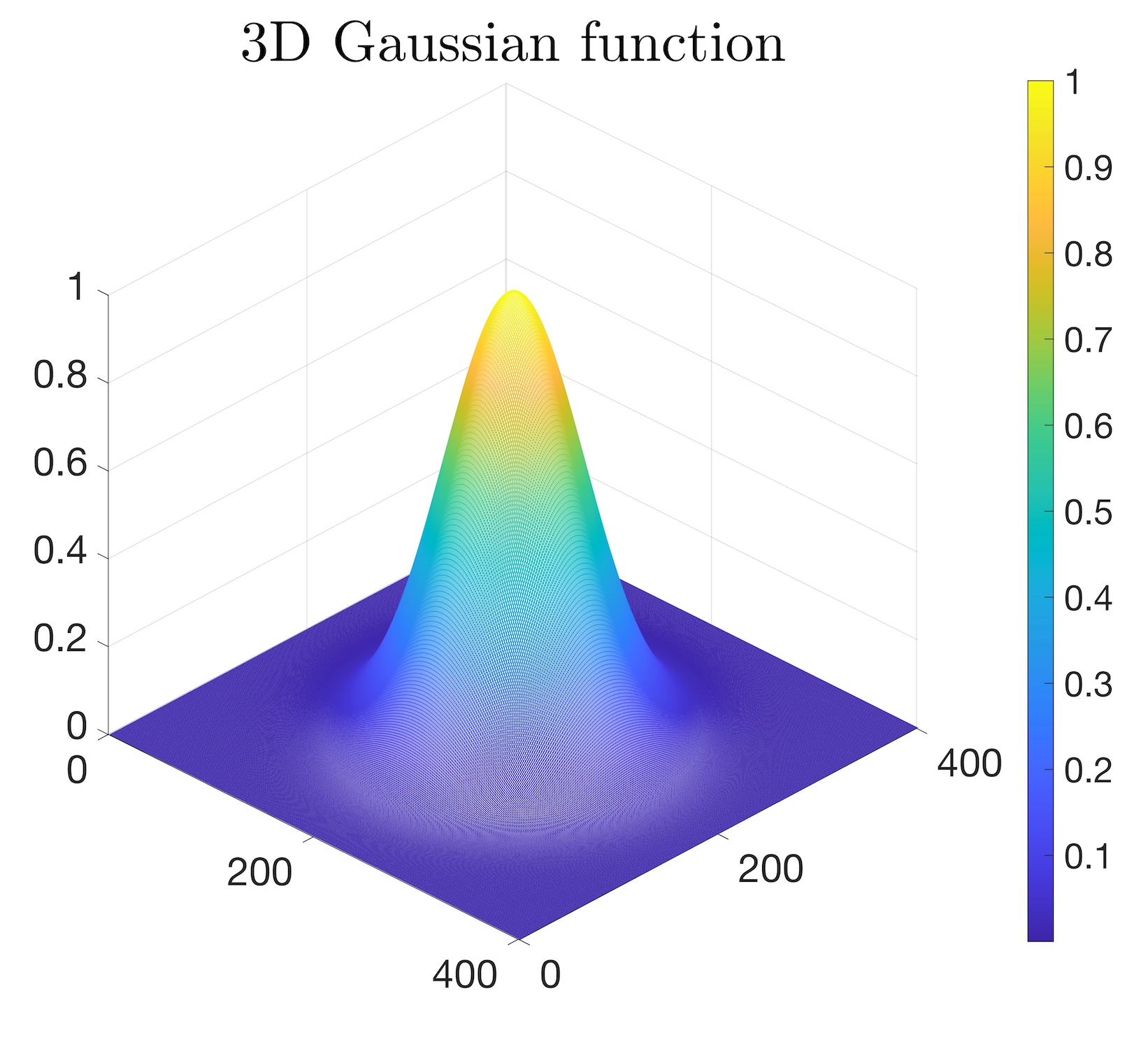}}
\caption{Gaussian function as a test image.}
\label{FIG5.1}
\end{figure}

Graphs (a) and (b) in Figure \ref{FIG5.1} are the test
images used for numerical
simulations in this whole section. Graph (a) presents a $400\times
400$ pixel 2D grayscale image, while Graph (b) is a
three-dimensional (for short, 3D) color image of Graph (a).
In the continuous case, Graph (a)  can be represented by the
following \emph{Gaussian function} $G$, which is
defined by setting, for any $(x_1, x_2)\in [0,400]^2$,
$$ G(x_1, x_2):=e^{-\frac{(x_1
- 200)^2+(x_2-200)^2}{2\cdot\sigma^2}}, $$ where $x_1$
and $x_2$ are the coordinates in the image matrix and
$\sigma$ is the standard deviation,  with $\sigma=50$. The center
of Graph (a)  is located at $(200, 200)$. Finally, the pixel
values of Graph (a) are normalized to ensure that its image data
falls within the range of $0$ to $1$, which is a common
practice to maintain consistency during display or
further processing.

\begin{figure}[H]
\centering
\subfigcapskip=-8pt
\subfigure[]{\includegraphics[width=0.461\linewidth]
{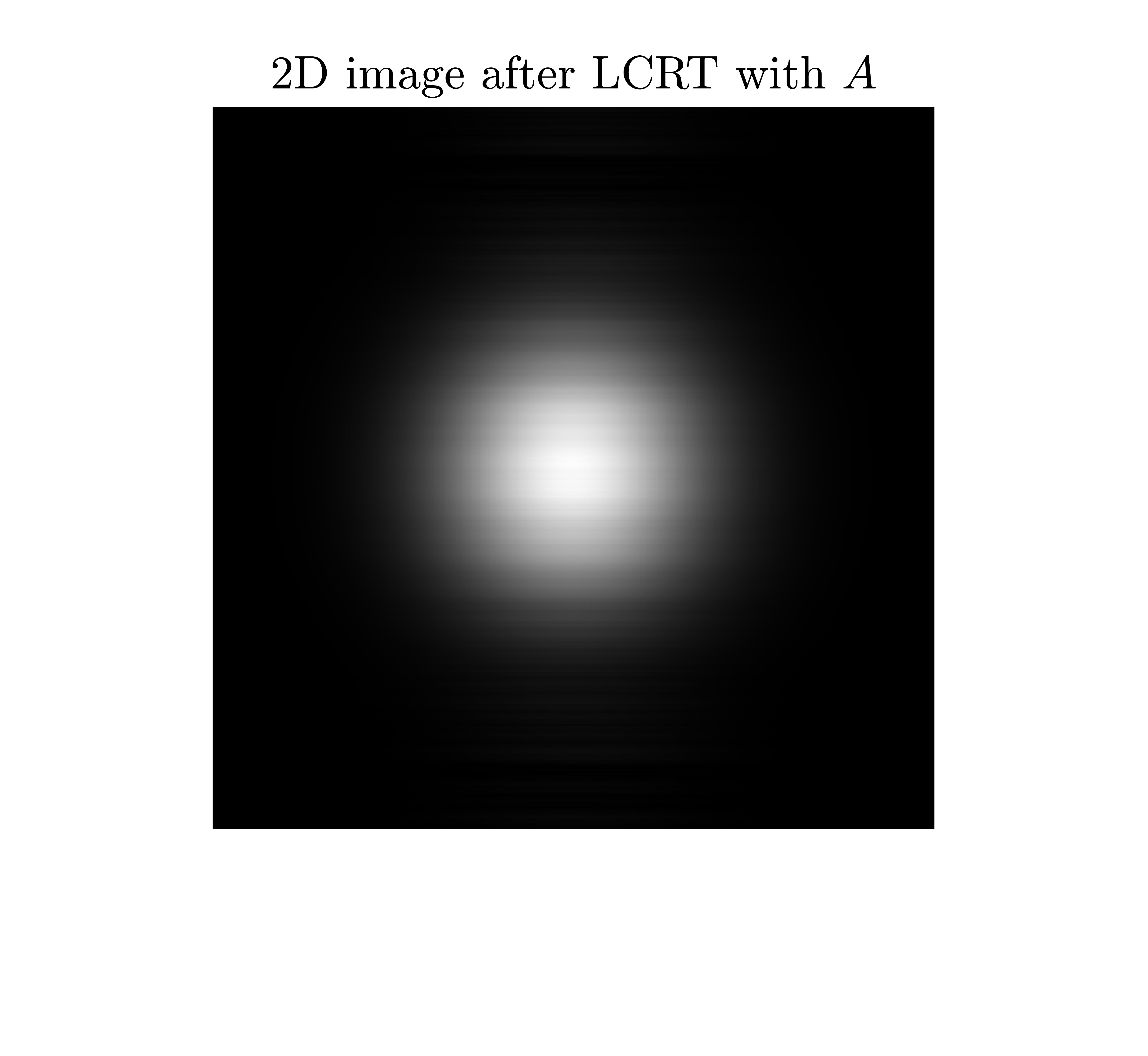}}\hspace{-0.5cm}
\subfigure[]{\includegraphics[width=0.461\linewidth]
{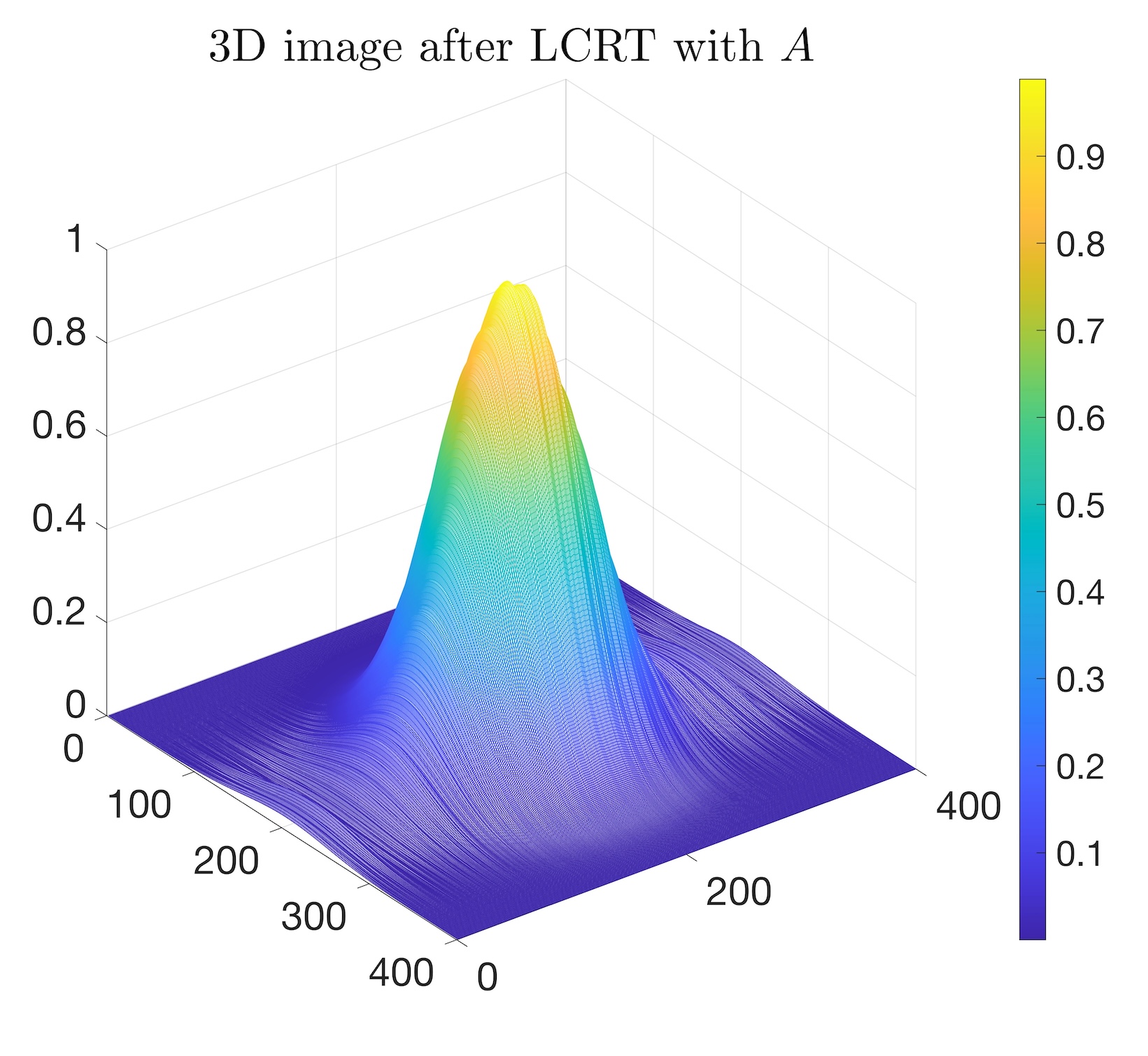}}\\\vspace{0.5cm}
\subfigure[]{\includegraphics[width=0.461\linewidth]
{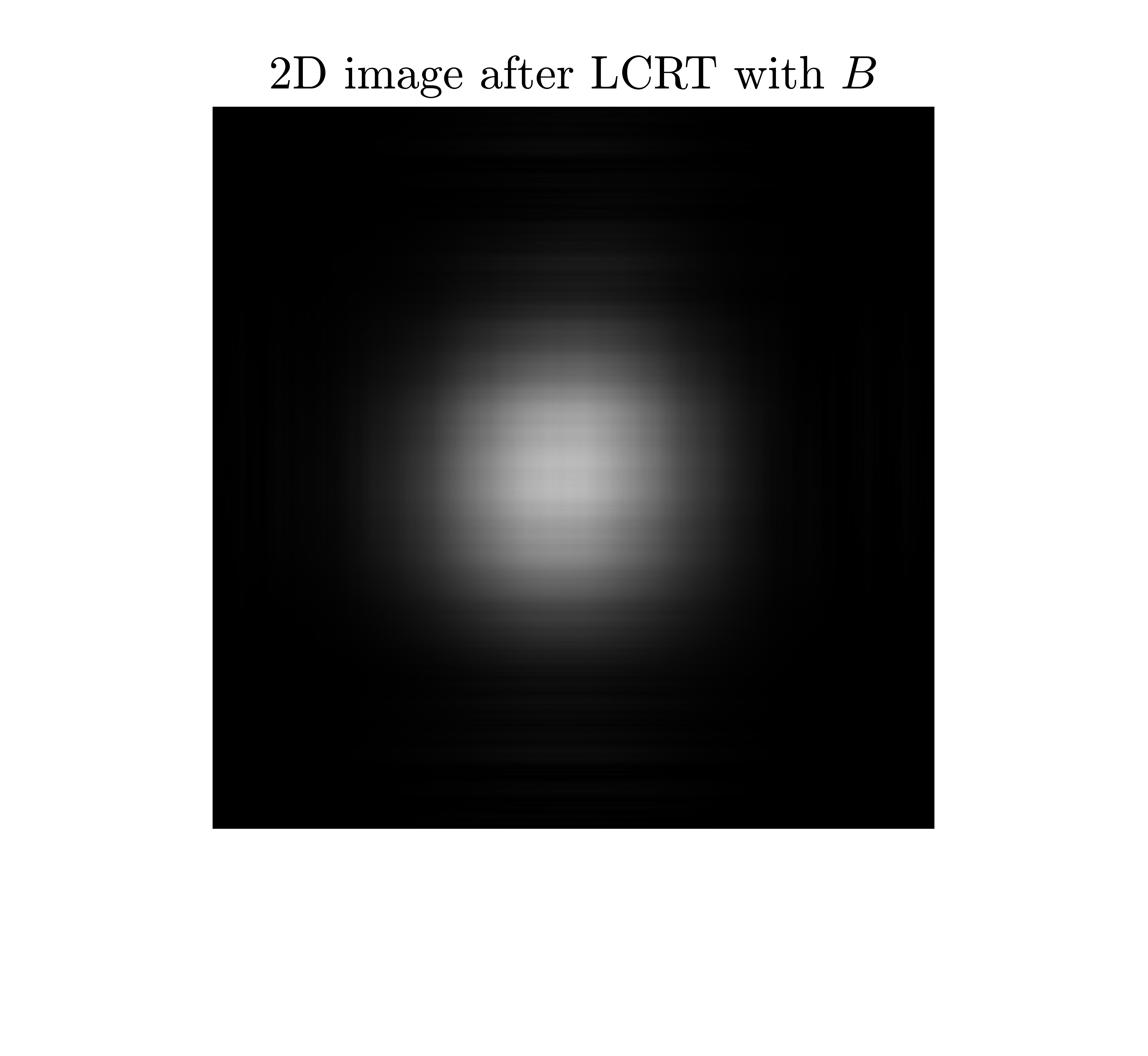}}\hspace{-0.5cm}
\subfigure[]{\includegraphics[width=0.461\linewidth]
{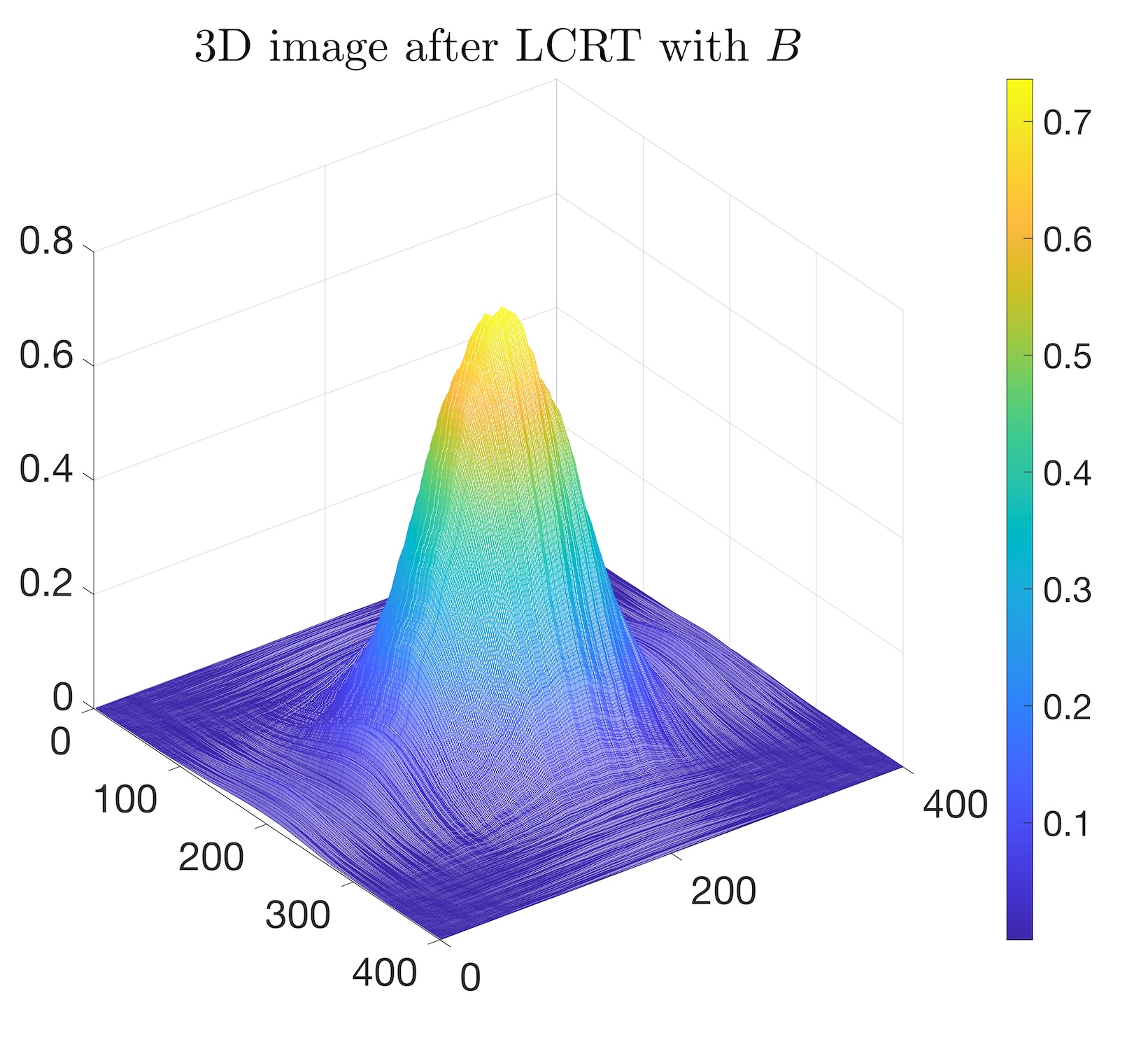}}\\\vspace{0.5cm}
\subfigure[]{\includegraphics[width=0.461\linewidth]
{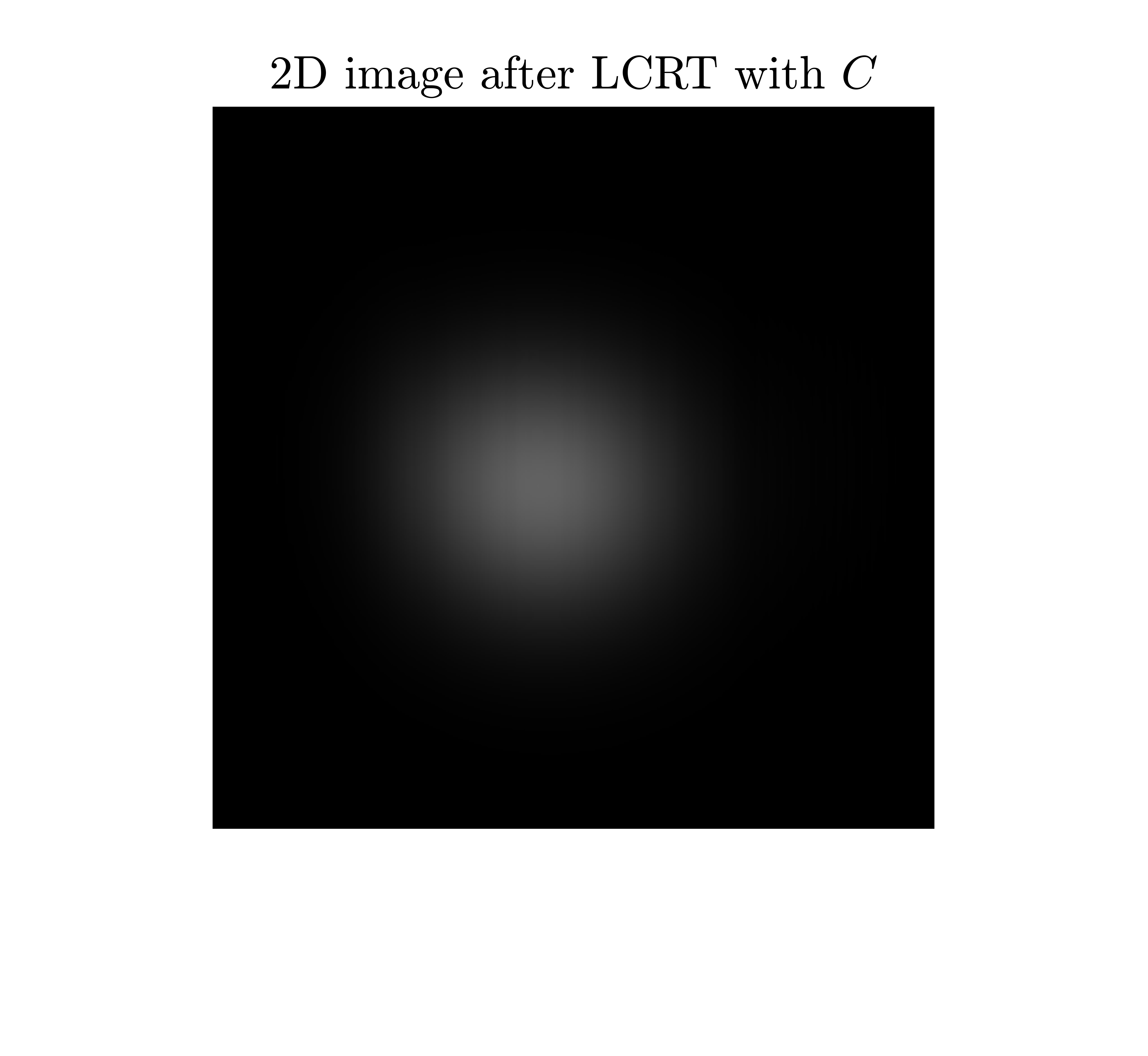}}\hspace{-0.5cm}
\subfigure[]{\includegraphics[width=0.461\linewidth]
{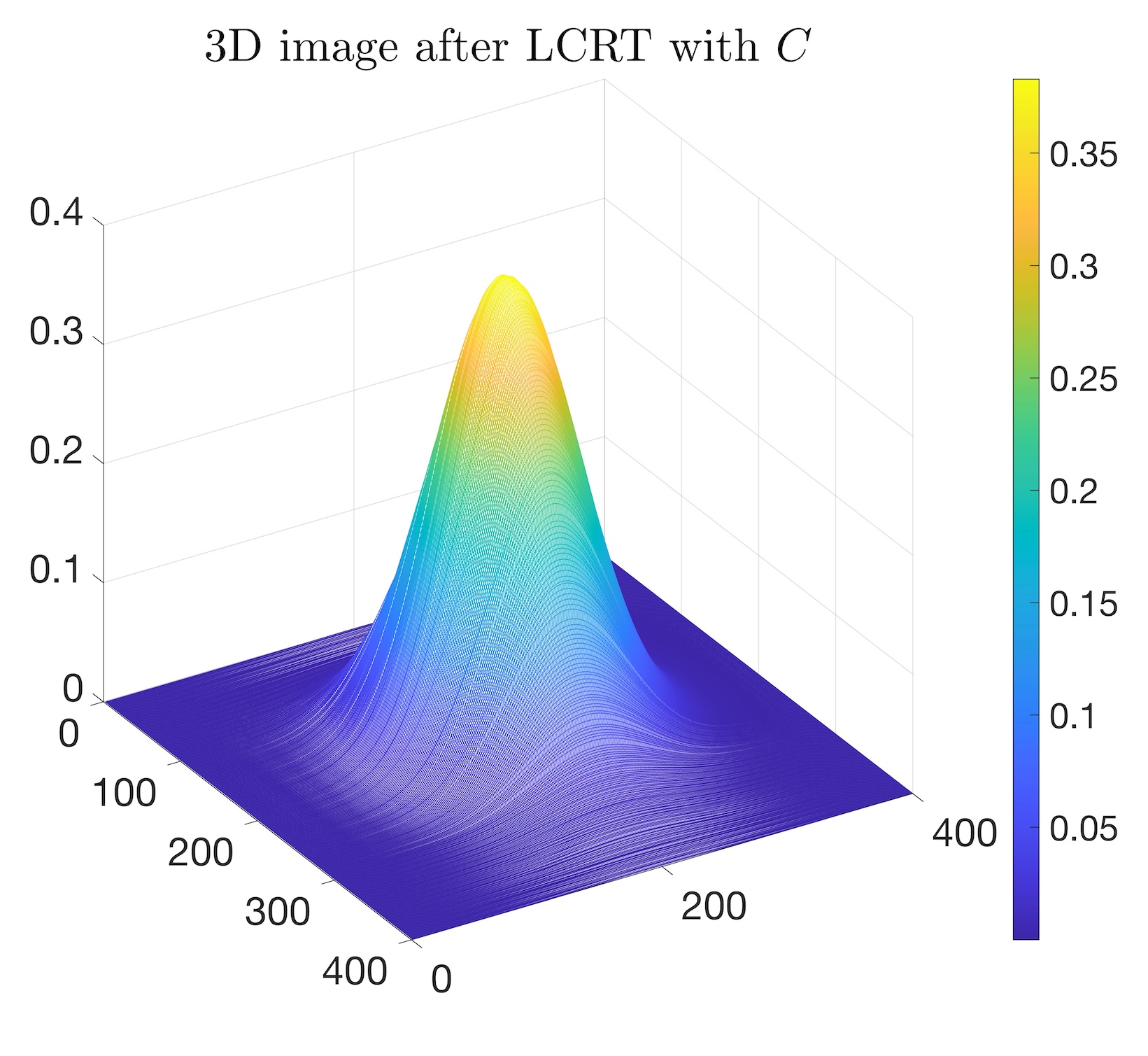}}
\caption{2D and 3D images of the test image after
applying LCRT with different parameters are shown.
Graphs (a) and (b), (c) and (d), and (d) and (e)
correspond to LCRT parameters ${\boldsymbol A}$,
${\boldsymbol B}$, and ${\boldsymbol C}$,
respectively.}
\label{FIG5.2}
\end{figure}

\begin{figure}[H]
\centering
\subfigcapskip=-8pt
\subfigure[]{\includegraphics[width=0.4506\linewidth]{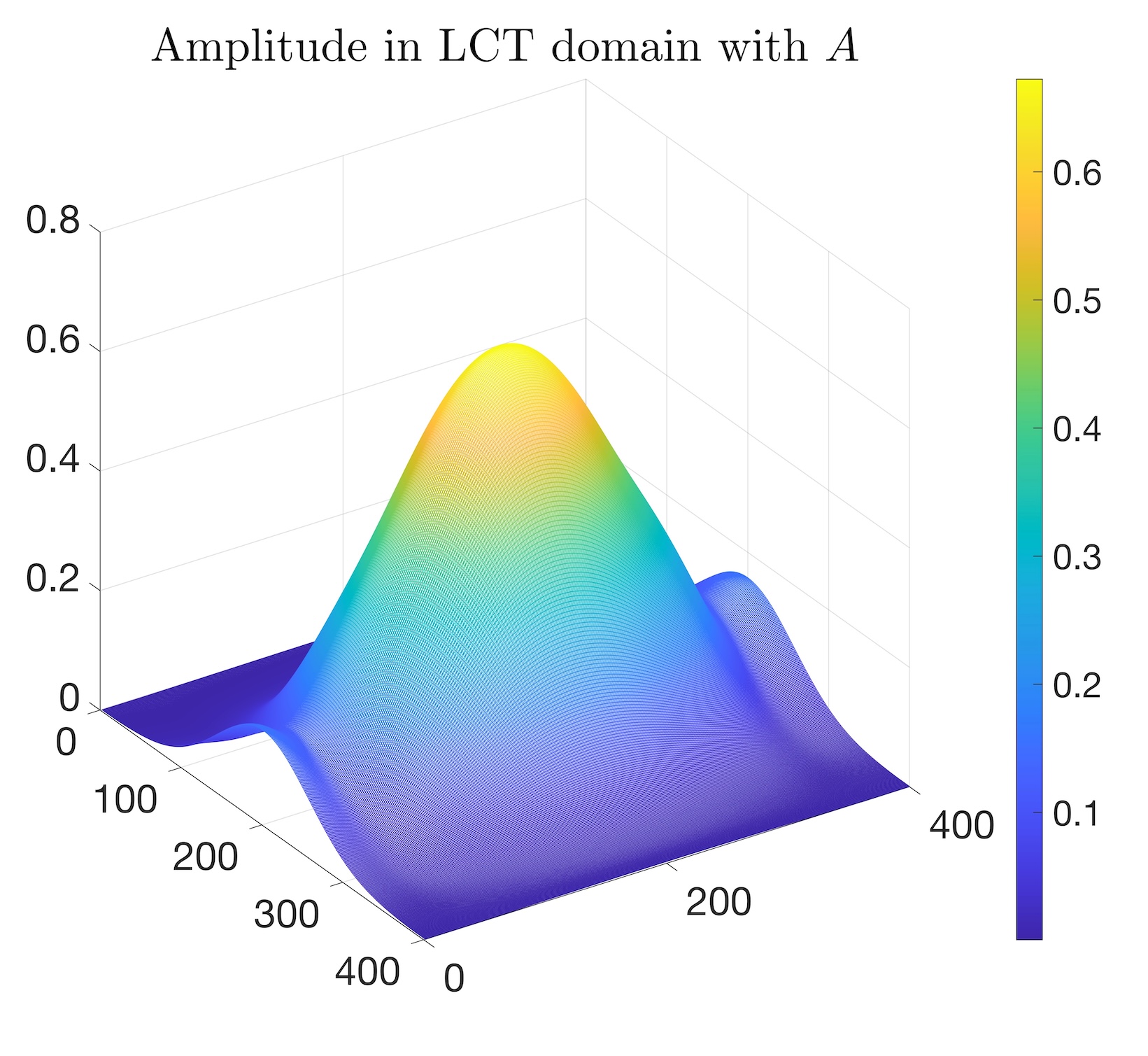}}\ \
\subfigure[]{\includegraphics[width=0.4506\linewidth]{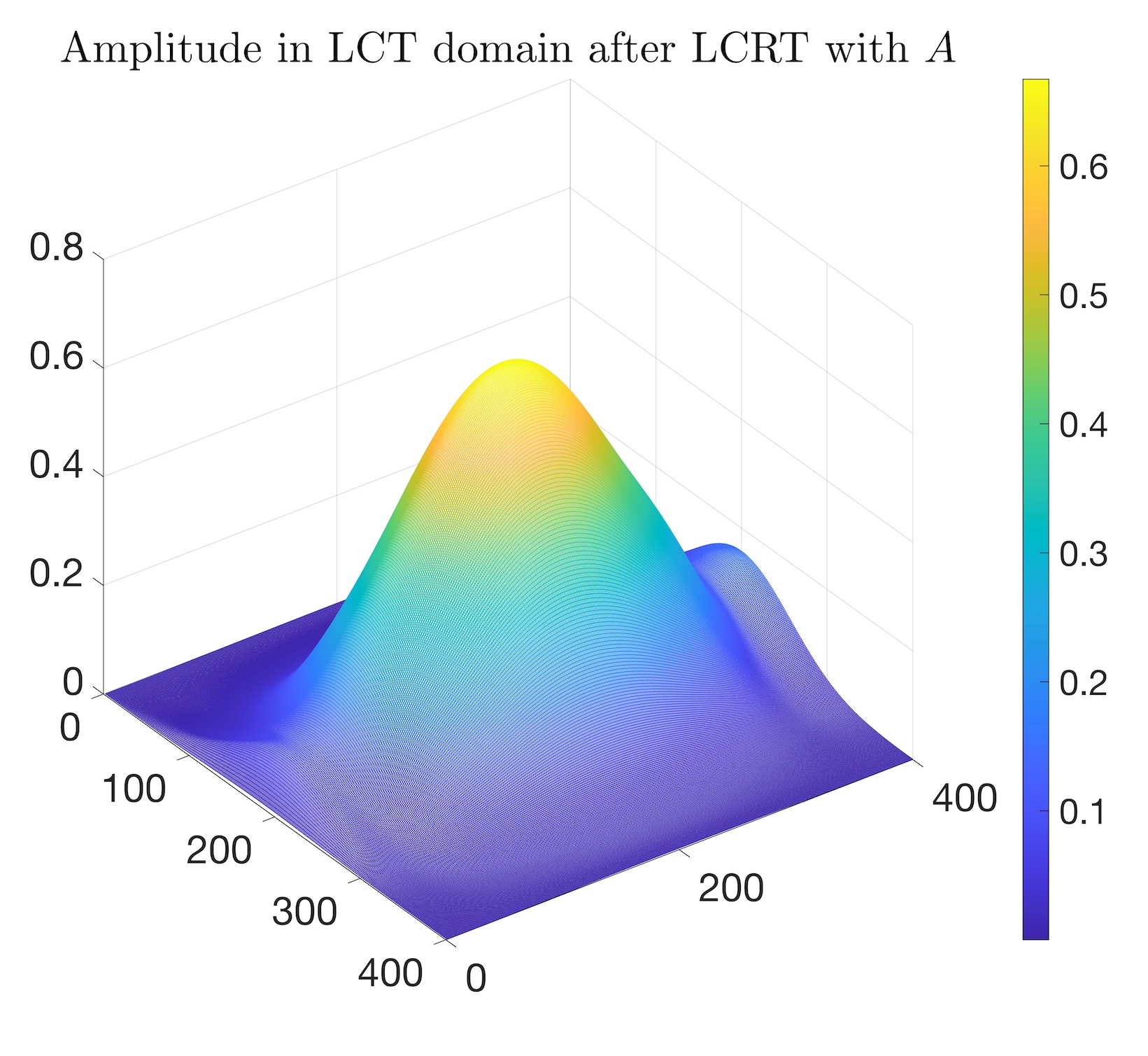}}\\
\vspace{0.5cm}
\subfigure[]{\includegraphics[width=0.4505\linewidth]{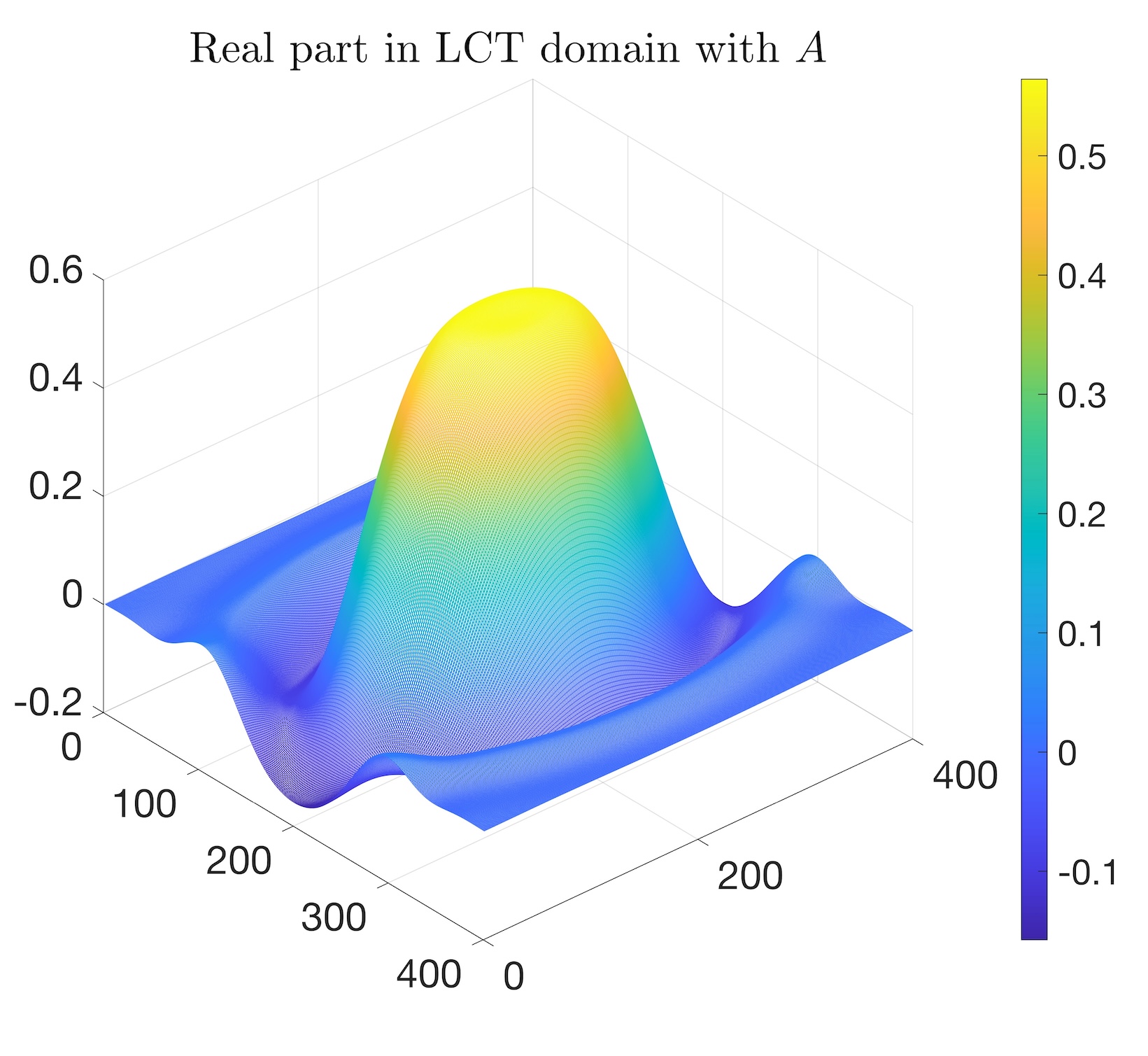}}\ \
\subfigure[]{\includegraphics[width=0.4505\linewidth]{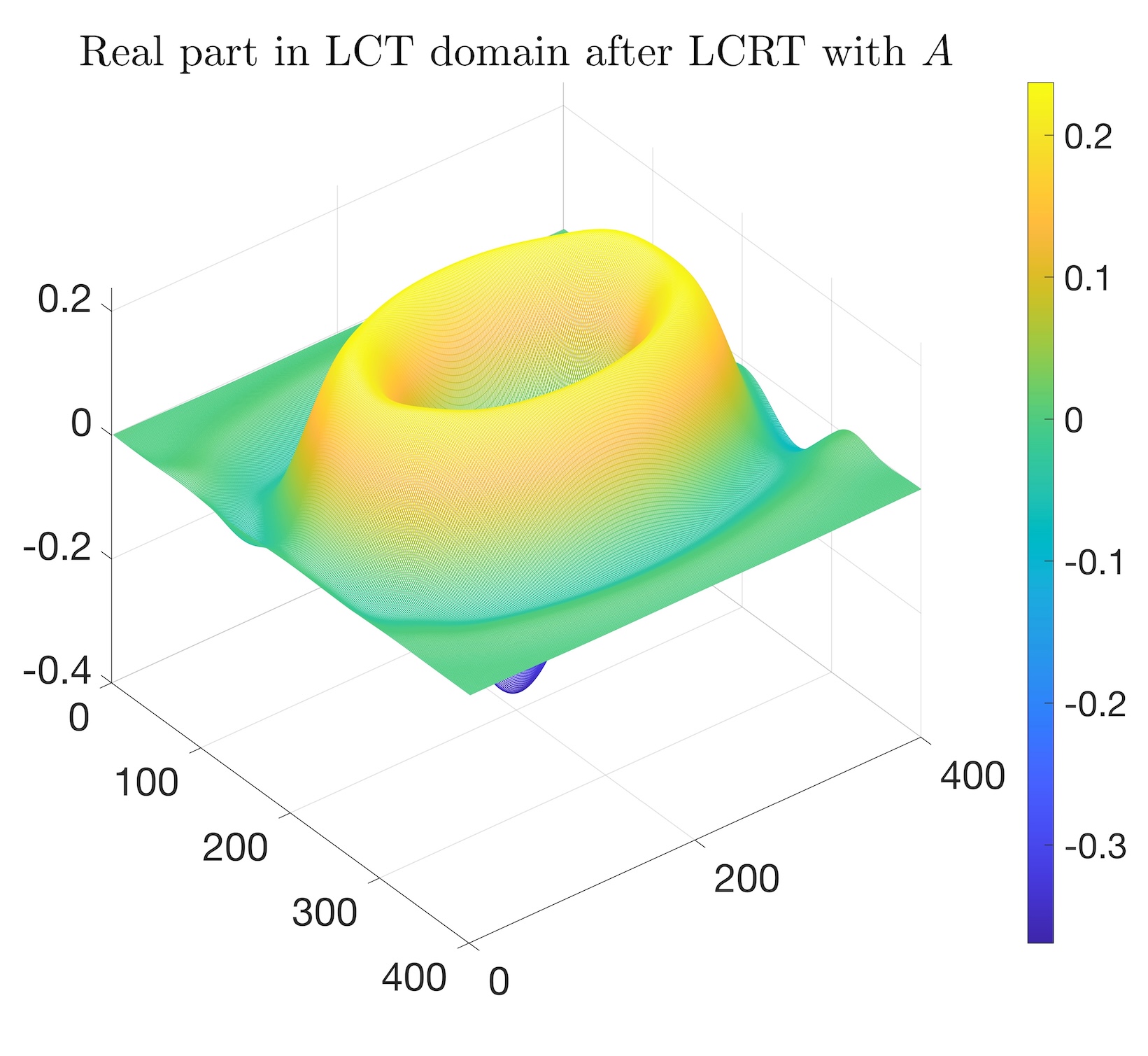}}\\
\vspace{0.5cm}
\subfigure[]{\includegraphics[width=0.45\linewidth]{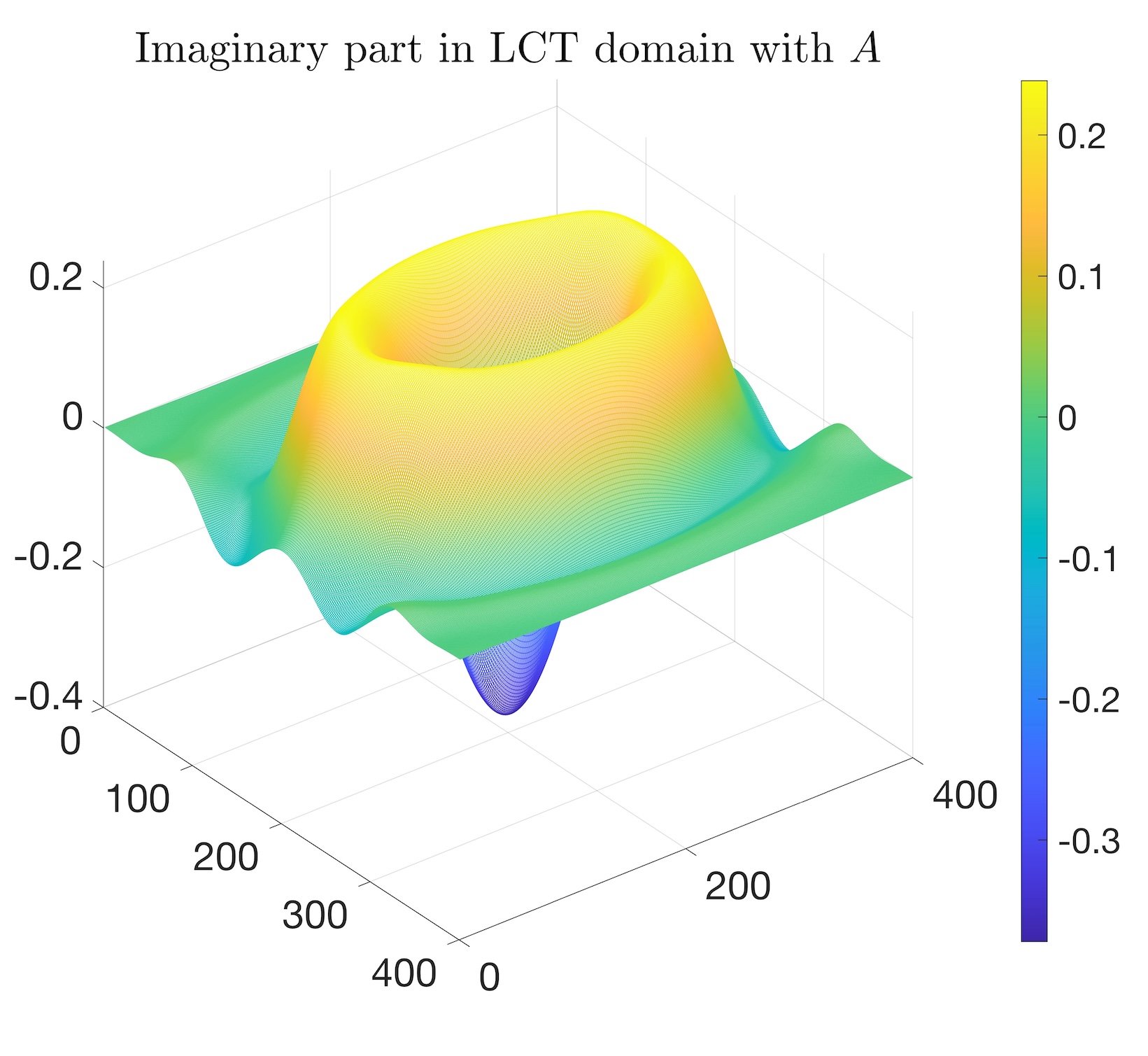}}\ \
\subfigure[]{\includegraphics[width=0.45\linewidth]{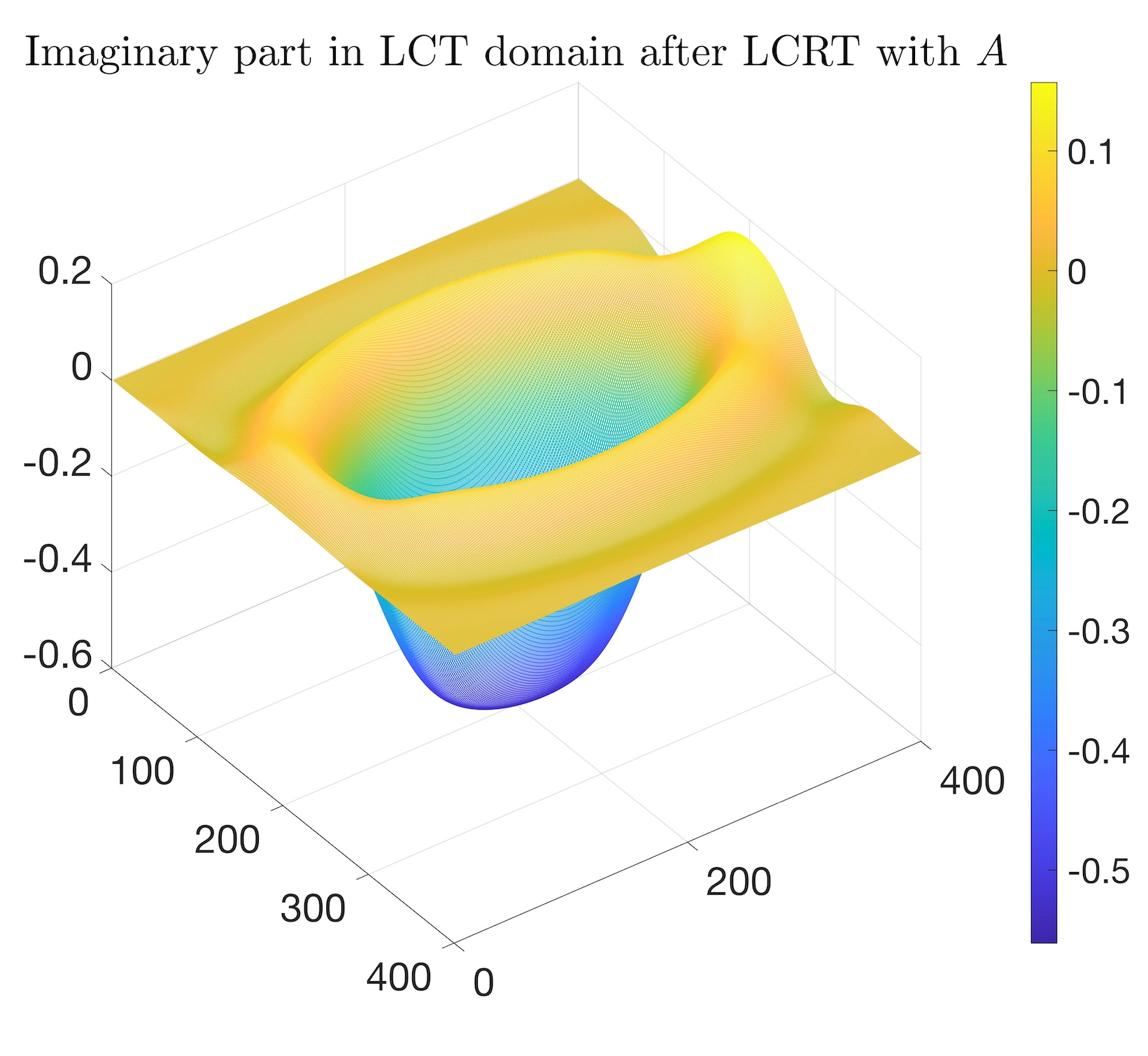}}
\caption{The amplitude, the real part, and the imaginary
part of the original test image in the LCT domain,
and those of the original image after applying
LCRTs in the LCT domain with the corresponding
parameter ${\boldsymbol A}$.}
\label{FIG5.3}
\end{figure}

\begin{figure}[H]
\centering
\subfigcapskip=-8pt
\subfigure[]{\includegraphics[width=0.4506\linewidth]{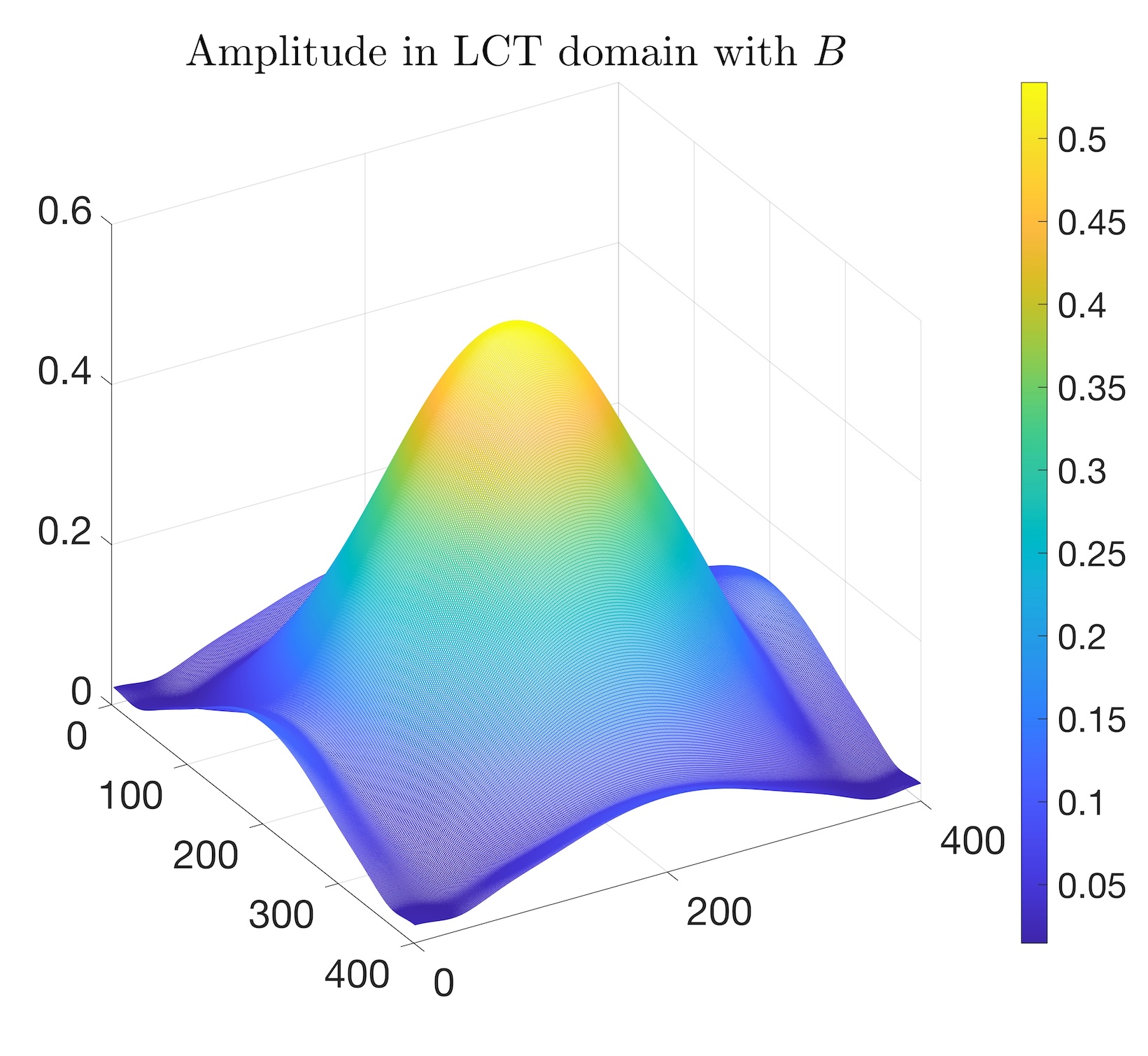}}\ \
\subfigure[]{\includegraphics[width=0.4506\linewidth]{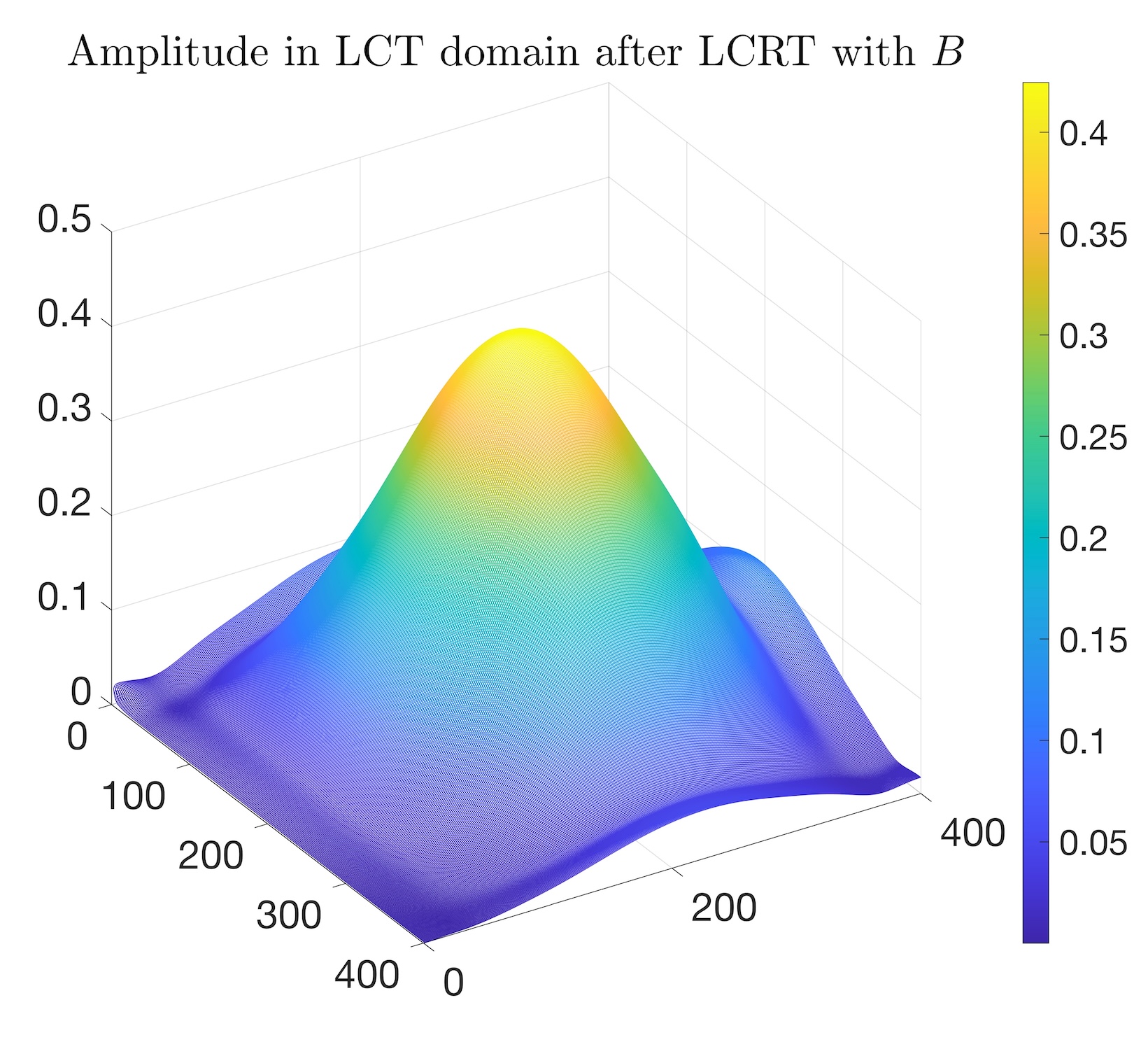}}\\
\vspace{0.5cm}
\subfigure[]{\includegraphics[width=0.4505\linewidth]{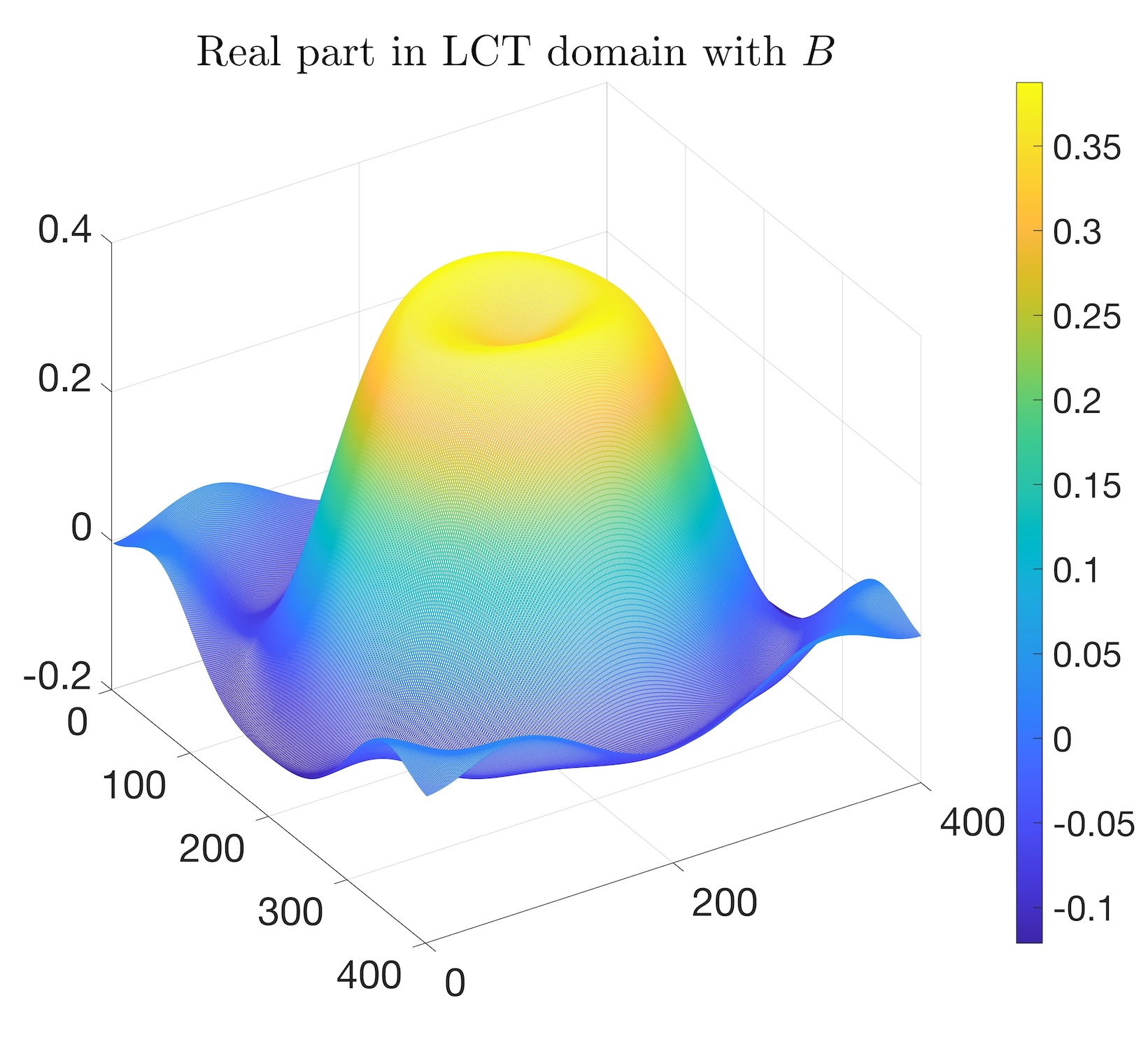}}\ \
\subfigure[]{\includegraphics[width=0.4505\linewidth]{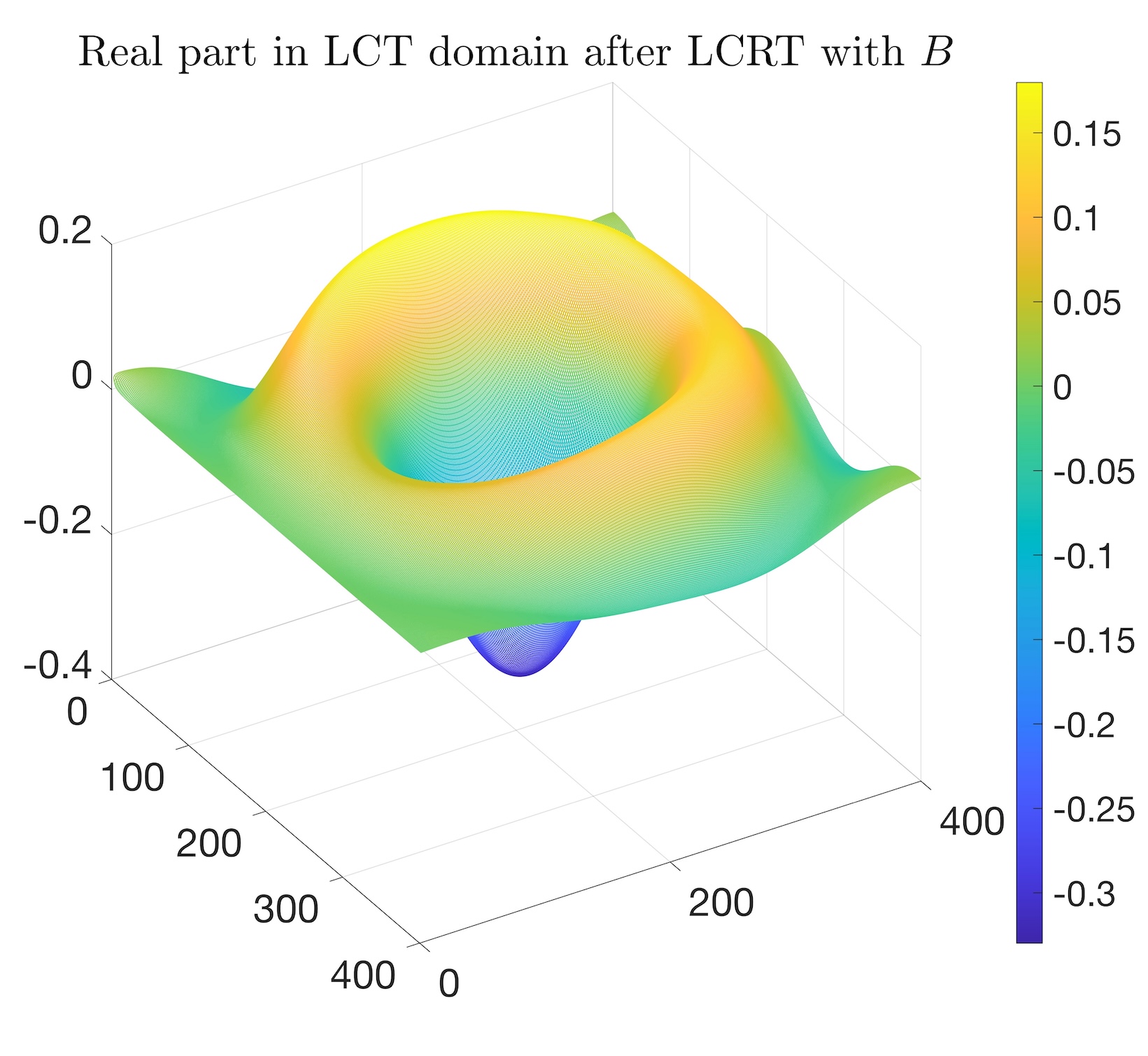}}\\
\vspace{0.5cm}
\subfigure[]{\includegraphics[width=0.45\linewidth]{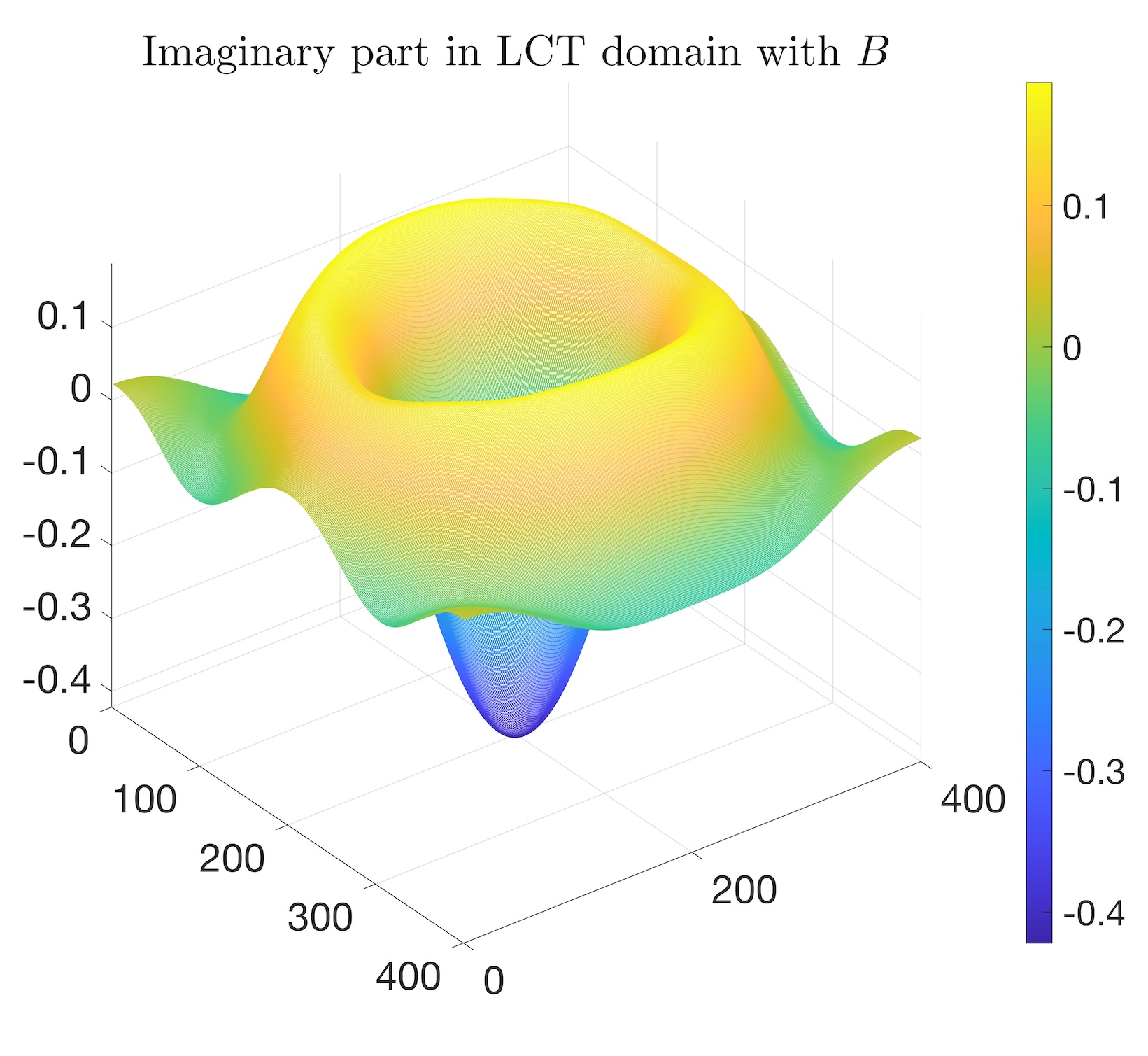}}\ \
\subfigure[]{\includegraphics[width=0.45\linewidth]{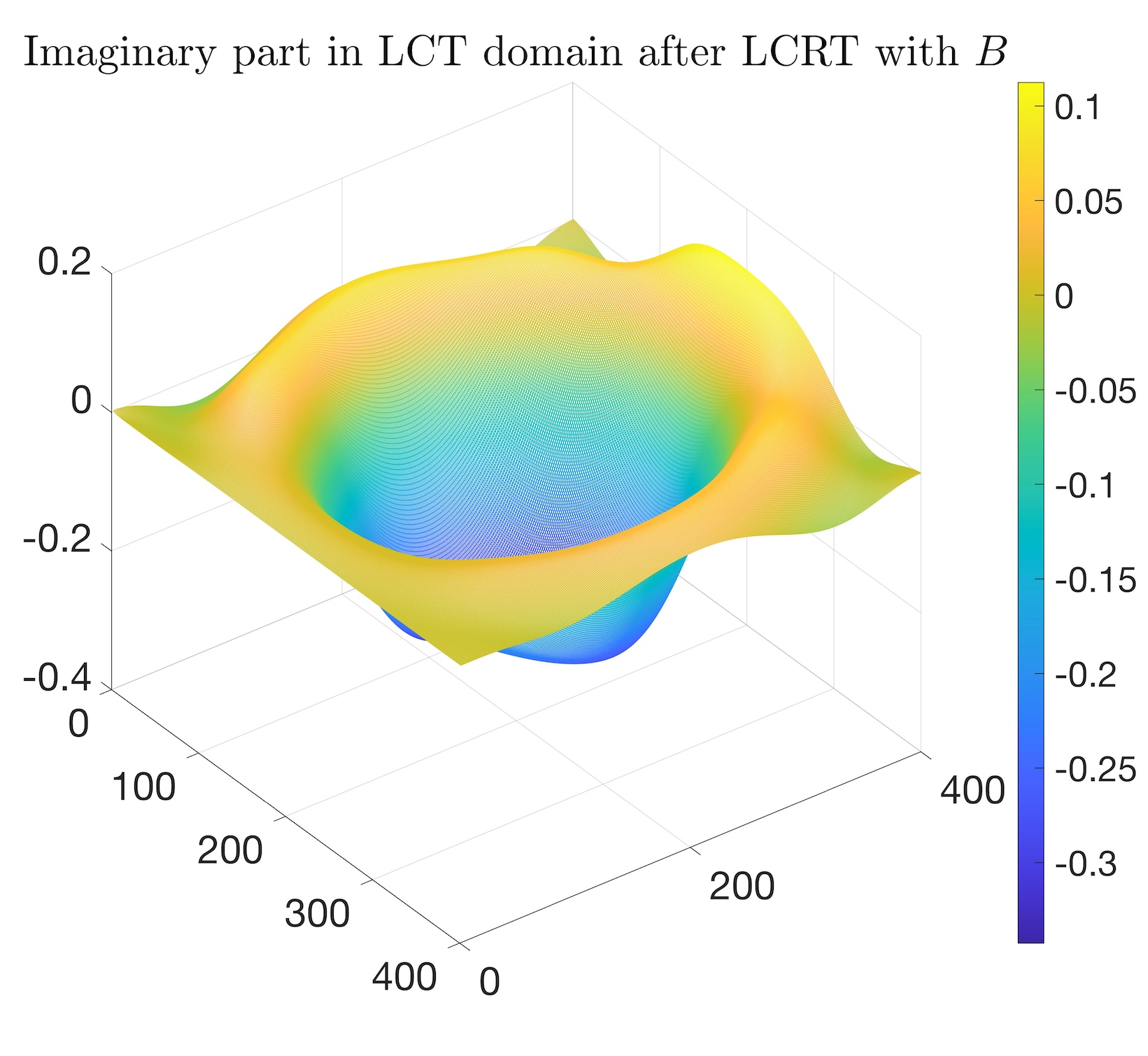}}
\caption{The amplitude, the real part, and the imaginary
part of the original test image in the LCT domain,
and those of the original image after applying
LCRTs in the LCT domain with the corresponding
parameter ${\boldsymbol B}$.}
\label{FIG5.4}
\end{figure}

\begin{figure}[H]
\centering
\subfigcapskip=-8pt
\subfigure[]{\includegraphics[width=0.32\linewidth]{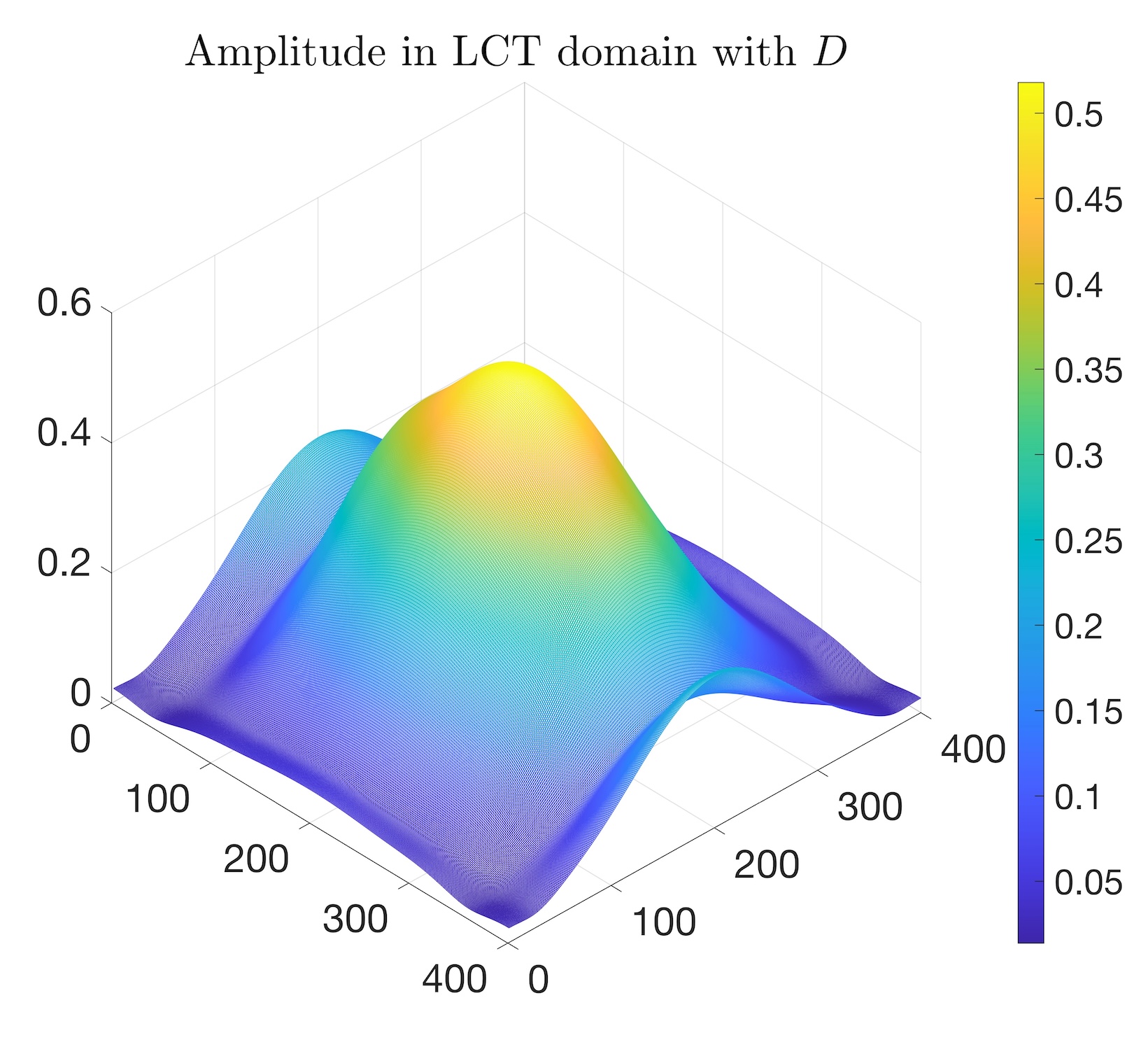}}\ \ \
\subfigure[]{\includegraphics[width=0.32\linewidth]{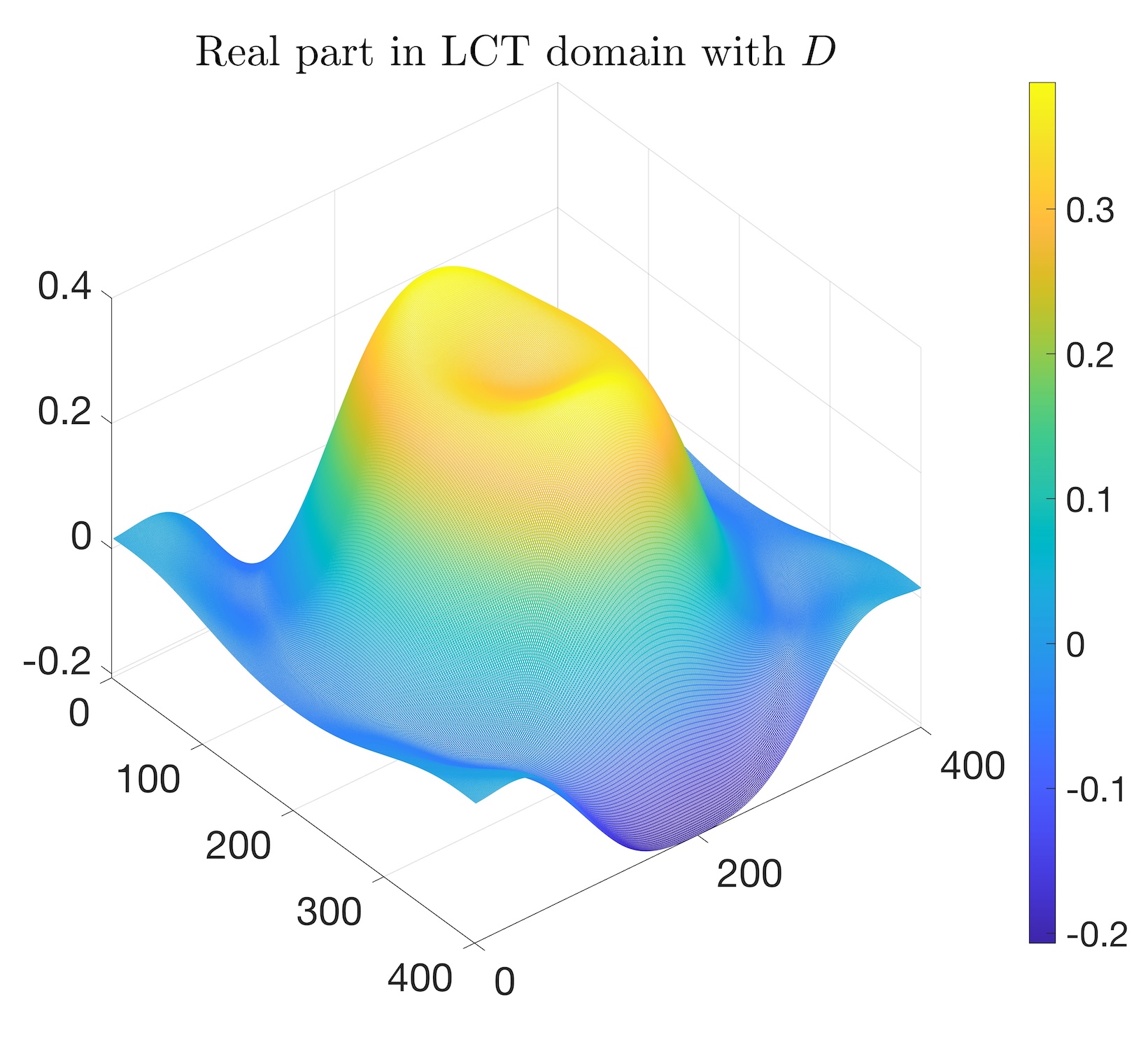}}\ \ \
\subfigure[]{\includegraphics[width=0.32\linewidth]{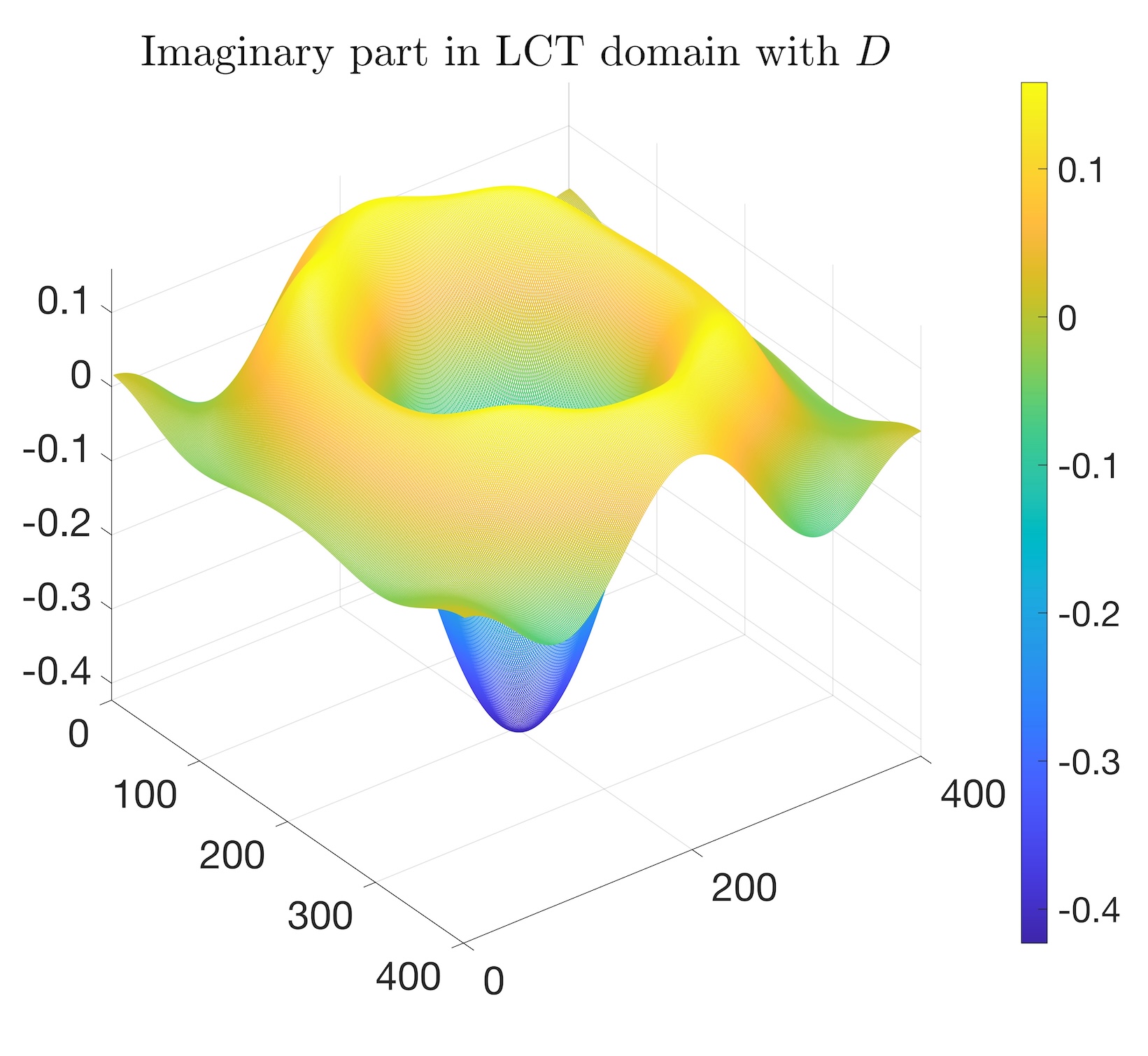}}\\
\vspace{0.3cm}
\subfigure[]{\includegraphics[width=0.32\linewidth]{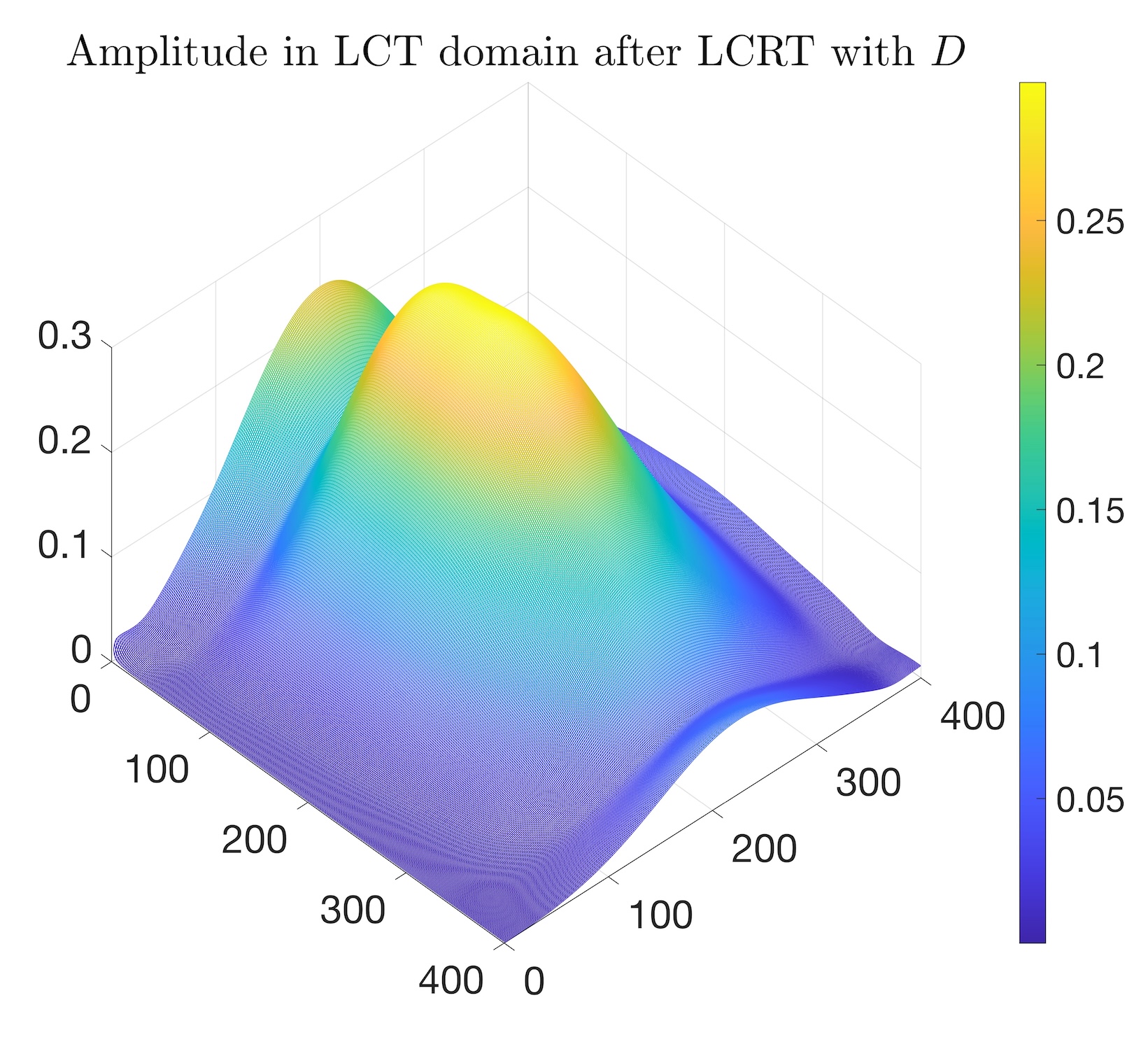}}\ \ \
\subfigure[]{\includegraphics[width=0.32\linewidth]{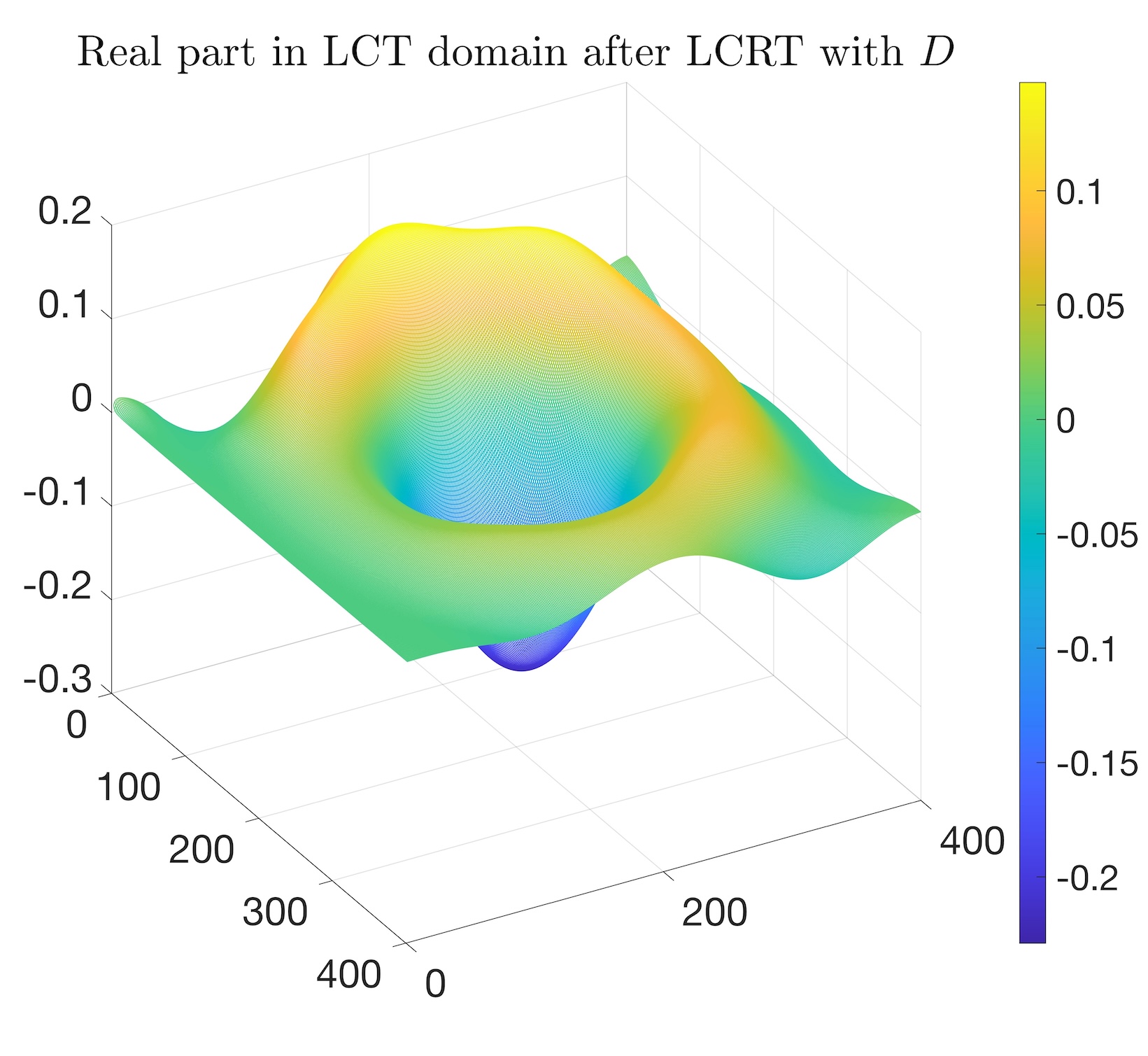}}\ \ \
\subfigure[]{\includegraphics[width=0.32\linewidth]{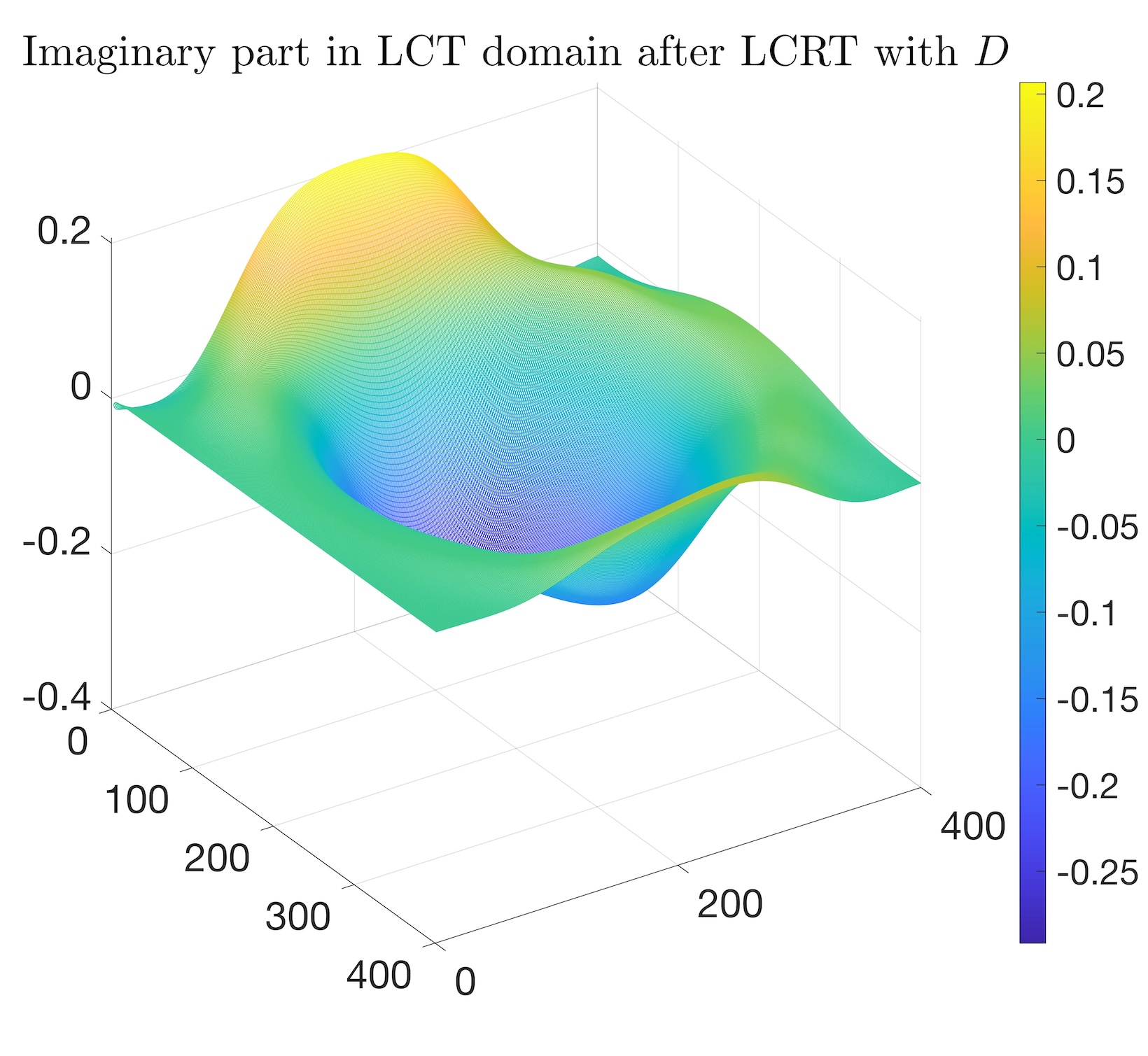}}\\
\vspace{0.3cm}
\subfigure[]{\includegraphics[width=0.32\linewidth]{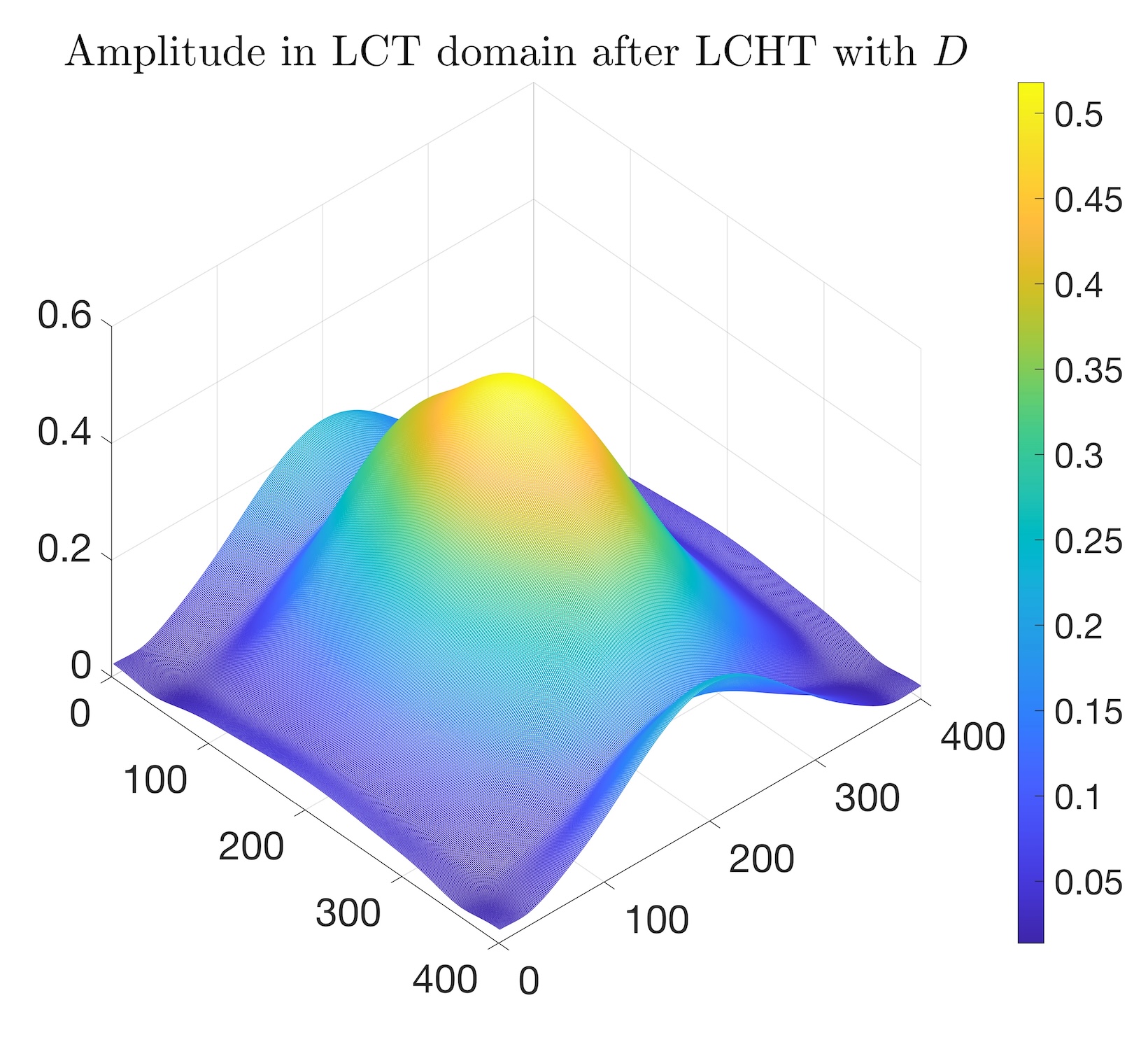}}\ \ \
\subfigure[]{\includegraphics[width=0.32\linewidth]{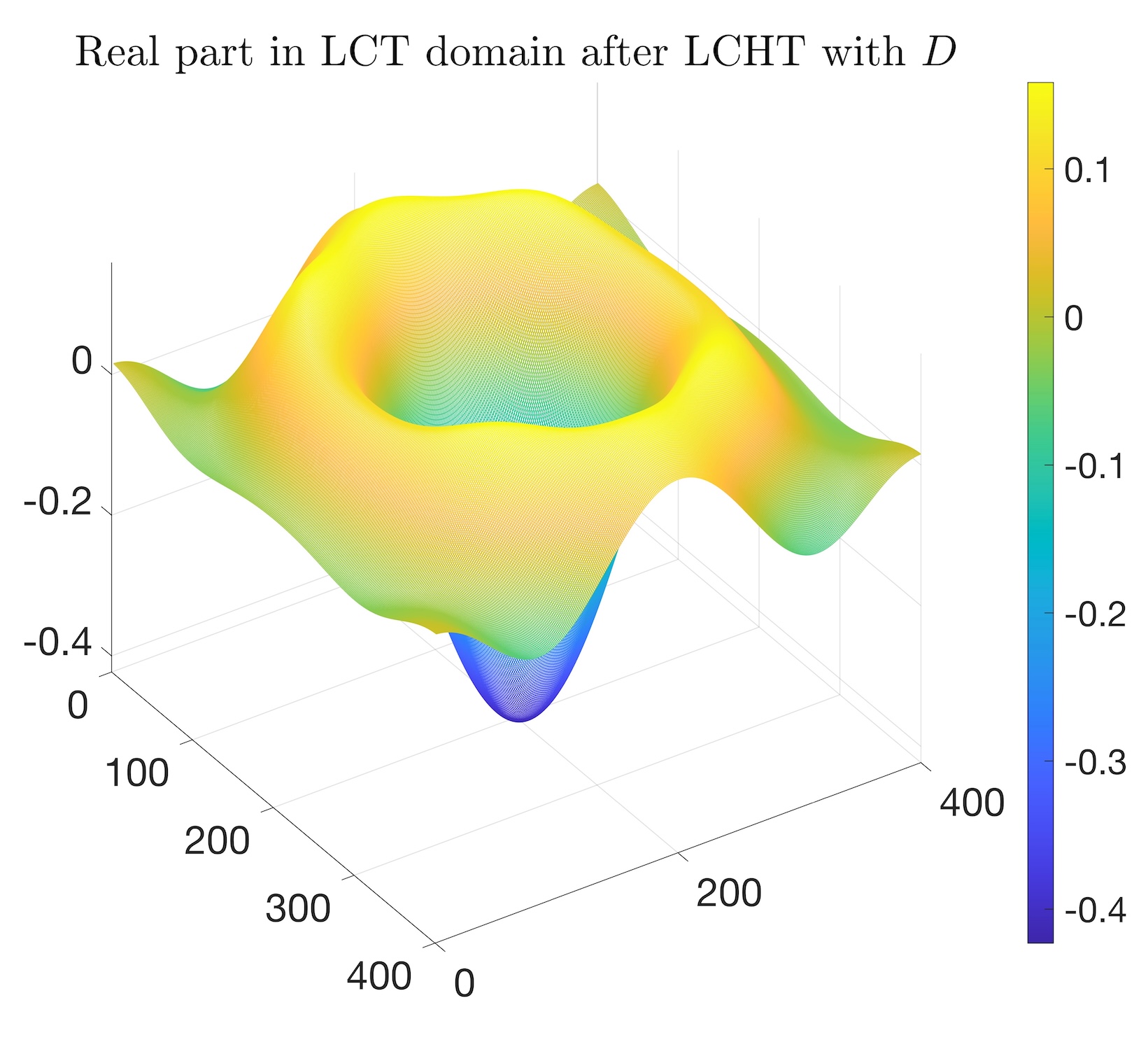}}\ \ \
\subfigure[]{\includegraphics[width=0.32\linewidth]{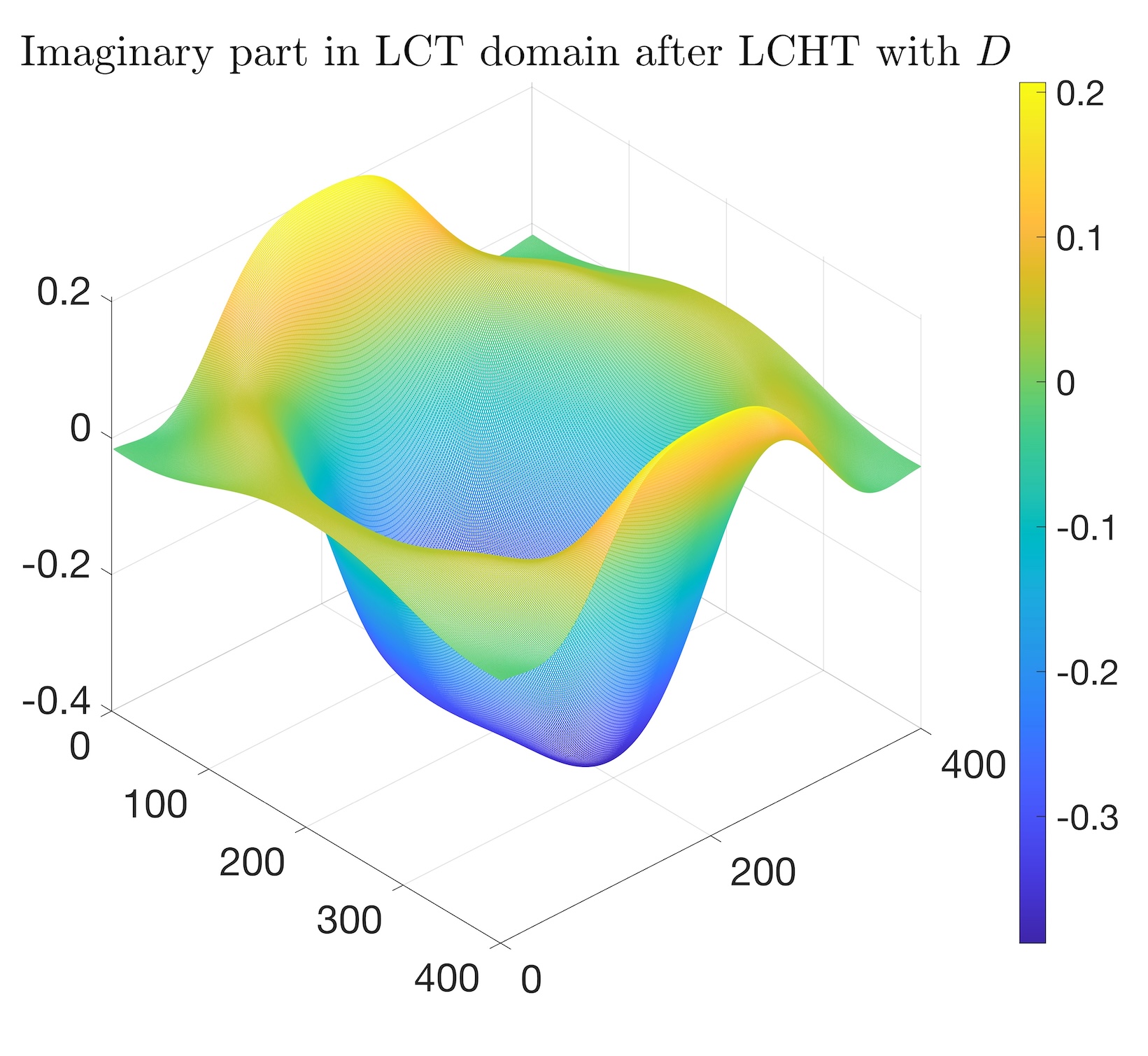}}
\caption{In the LCT domain, the amplitude, the real part,
as well as the imaginary part of the original test image, and those
of the original image after applying  LCRTs
and HLCHTs, are presented, with the corresponding
parameter being ${\boldsymbol D}$.}
\label{FIG9}
\end{figure}

In Figure \ref{FIG5.2}, Graphs (a), (c), and (e) represent
grayscale images obtained by applying LCRTs
$R_1^{\boldsymbol A}$, $R_1^{\boldsymbol B}$,
and $R_1^{\boldsymbol C}$ with different parameters on
Graph (a) in Figure
\ref{FIG5.1}. Their parameter matrices correspond, respectively, to
${\boldsymbol A}:=(A_1,A_2)$ with
$A_1:=\begin{bmatrix}{6}\ &{50}\\
{0.7}\ &{6}\end{bmatrix}$ and
$A_2:=\begin{bmatrix}{3}\ &{400}\\
{0.02}\ &{3}\end{bmatrix}$,
${\boldsymbol B}:=(B_1,A_2)$ with
$B_1:=\begin{bmatrix}{10}\ &{330}\\
{0.3}\ &{10}\end{bmatrix}$,
and ${\boldsymbol C}:=(C_1,C_2)$ with
$C_1:=\begin{bmatrix}{20}\ &{39.9}\\
{10}\ &{20}\end{bmatrix}$ and
$C_2:=\begin{bmatrix}{4}\ &{1}\\
{15}\ &{4}\end{bmatrix}$.  Also, in Figure \ref{FIG5.2},
Graphs (b), (d), and (f)   show
the 3D color images of Graphs (a), (c), and (e), respectively.
From Figure \ref{FIG5.2}, we infer that applying  LCRTs with
different parameter matrices to the same test image
yields significantly  different effects, which further
indicates that parameter matrices of LCRTs completely
determine its  role.

In Figure \ref{FIG5.3}, Graphs (a), (c), and (e) represent,
 respectively, the amplitude, the real part, and the imaginary part of the
original test image in Figure \ref{FIG5.1} in the LCT domain, while Graphs
(b), (d), and (f) represent, respectively, the amplitude, the real part, and the
imaginary part of the original test image in Figure \ref{FIG5.1} after
applying LCRTs in the LCT domain using the same parameter matrix
${\boldsymbol A}$ as in Figure \ref{FIG5.2}.
Comparing Graphs (a) with (b), we find that LCRTs
dramatically attenuate the amplitude in the LCT domain. Additionally,
by comparing Graphs (c) with (f) and Graphs (e) with (d),
we find that LCRTs  have an obvious shifting effect in the
LCT domain.

Replacing the parameter matrix
${\boldsymbol A}$  with the parameter matrix
${\boldsymbol B}$ (both ${\boldsymbol A}$ and
${\boldsymbol B}$ are the same as  in Figure \ref{FIG5.2})
and then repeating the experiment same as in
Figure \ref{FIG5.3}, we obtain Figure \ref{FIG5.4}.
We find  that, in different LCT domains,
LCRTs always have obvious and different  effects of shifting
and amplitude attenuation. Therefore, it is important to
select the appropriate LCT domain for solving
various problems.

In Figure \ref{FIG9}, we compare the amplitude, the real
part, and the imaginary part of the original test image in Figure
\ref{FIG5.1}, the
original test image in Figure
\ref{FIG5.1} after applying LCRTs $R_1^{\boldsymbol{D}}$,
and the original test image in Figure
\ref{FIG5.1} after applying HLCHTs  $H_x^{D_1}$
in the LCT domain, where the corresponding parameter matrix is
${\boldsymbol D}:=(D_1,D_2)$, where
$D_1:=\begin{bmatrix}{6}\ &{500}\\
{0.07}\ &{6}\end{bmatrix}$ and
$D_2:=\begin{bmatrix}{4}\ &{300}\\
{0.05}\ &{4}\end{bmatrix}$.
Graphs (a), (d), and (g) represent the amplitude for the aforementioned
three cases, while Graphs (b), (e), and (h) represent their
real parts, and Graphs (c), (f), and (i) represent their
imaginary parts. Observing Graphs (a), (d), and (g),
we conclude that LCRTs have the effect of reducing amplitude
in the LCT domain, whereas the HLCHT does not exhibit
this effect.  Comparing Graphs (b), (e), (h), (c), (f), and (i),
we find that LCRTs induce a phase shift of $\pi/2$ in the
LCT domain, and HLCHTs also have the same effect.

 In summary,  through a series of  image simulation experiments,
we have verified that LCRTs  are able to yield both shifting and attenuation
of amplitudes in the LCT domain. Compared to the HLCHT,
the LCRT is able to attenuate amplitudes in the LCT
domain, which may enhance performance in image feature
extraction, image quality enhancement, and texture analysis, while,
compared to fractional Riesz transforms, LCRTs provide a
broader selection of frequency domains and greater flexibility,
potentially resulting in more precise extraction of local image
feature information.

\section{Applications}\label{sec4}
 
In this section, we present the application of LCRT to 
refining image edge detection. In Section \ref{subsec5.1}, we 
introduce the concept of the sharpness $R^{\rm E}_{\rm sc}$ of the edge strength and  continuity of images associated with the LCRT, and propose 
 a new LCRT image edge detection method (namely LCRT-IED method)
and provide its mathematical foundation. 
 The experimental results  demonstrate that this new LCRT-IED
method is able to gradually control of the edge strength and  continuity by 
subtly adjusting the parameter matrices of the LCRT.  Meanwhile, 
 in local edge feature extraction, the LCRT-IED method better 
 preserves fine image edge details in some sub-regions. In Section 
 \ref{subsec5.2}, we further summarize the advantages and 
 features of the LCRT-IED method in image processing. 

\subsection{Refining Image Edge Detection by Using
$R^{\rm E}_{\rm sc}$\label{subsec5.1}}

The image processing has long been a central focus
in information sciences, playing a vital role in applied
 sciences and garnering significant attention (see, for
example, \cite{bshyh2007,gcn21,gcn21-2,hcg24}).
In image processing, edge detection is one of the
key steps in understanding image content. It involves extracting
important structural information from images, such as contours
and textures, which is crucial for image analysis and computer
vision tasks. While traditional edge detection methods, such
as the Canny and Sobel edge detectors, perform well in certain
scenarios, they often struggle with complex or noisy images
(see, for example, \cite{cd14,dwy13}). Recently, the Riesz
transform has garnered attention as a novel edge detection
technique due to its advantages in the analysis of multi-scale
and directional analysis tasks. The Riesz transform can provide
richer edge information compared to traditional methods,
demonstrating its superior performance in image processing
(see, for example, \cite{fglwy23,la10}). Based on
the theoretical  foundations of Riesz transforms and fractional Riesz
transforms, this section explore the application of LCRTs
in edge detection. Through the whole section, we work on $\mathbb R^2$.

In image processing and computer vision, edge strength
and continuity are two crucial attributes that are essential
for image analysis and understanding. \emph{Edge strength}
refers to the prominence of an edge within an image,
typically associated with the rate of strength change at
the edge. Regions with high edge strength exhibit
greater changes in image strength, indicating a more
pronounced edge. Edge strength information plays a
vital role in tasks, such as image segmentation, feature
extraction, and object recognition. In some cases,
edge strength can be used to differentiate among
different types of edges, such as contour edges and texture
edges. \emph{Edge continuity} describes the coherence of an
edge within an image. A continuous edge means that
edge points are closely connected without interruptions.
In image segmentation and object detection, edge
continuity is a key feature. Continuous edges help to
define object boundaries, leading to more accurate
image segmentation and object recognition.

Felsberg and Sommer \cite{fs01} introduced the monogenic
signal in image processing, while Larkin et al. \cite{lbo01}
introduced this signal in the optical context. For an image
$f$, the \emph{monogenic signal} $\left(p,q_1, q_2\right)$
is defined as the combination of $f$ and its LCRTs as follows:
$$\left(p,q_1, q_2\right):=\left(f,
R_1^{\boldsymbol{A}}f,
R_2^{\boldsymbol{A}}f \right),$$
where the LCRTs $R_1^{\boldsymbol{A}}$ and
$R_2^{\boldsymbol{A}}$  are the same as in
Theorem \ref{LCRT-rf} with $n=2$. Due to the existence of fast
algorithms for the LCT, this approach significantly reduces
the computational complexity compared to the convolution type LCRT.
For any $\boldsymbol{x}\in \mathbb{R}^2$,
the \emph{local amplitude value} $A(\boldsymbol{x})$,
the \emph{local orientation} $\theta(\boldsymbol{x})$,
and the \emph{local phase} $P(\boldsymbol{x})$ in the monogenic
signal in the image are respectively defined by setting
$$A(\boldsymbol{x}):=\sqrt{p(\boldsymbol{x})^2+|q_1
(\boldsymbol{x})|^2+|q_2(\boldsymbol{x})|^2},$$
\begin{equation}\label{eqlo}
\theta(\boldsymbol{x}):=\tan^{-1}\left\{\left|\frac{q_2
(\boldsymbol{x})}{q_1(\boldsymbol{x})}\right|\right\},\ \mathrm{and}\
P(\boldsymbol{x}):=\tan^{-1}\left[\frac{p(\boldsymbol{x})}
{\sqrt{|q_1(\boldsymbol{x})|^2+|q_2(\boldsymbol{x})|^2}}\right].
\end{equation}
 
Next, we introduce the sharpness of the edge strength and 
continuity of the image associated with the LCRT.

\begin{definition}\label{is}
Let $M:=\begin{bmatrix}
{a}&{b}\\{c}&{d}
\end{bmatrix}\in{M_{2\times2}}(\mathbb{R})$ be either of
$A_1$ and $A_2$ appearing in the definition of the LCRTs
$\{R_1^{\boldsymbol{A}},R_2^{\boldsymbol{A}}\}$ with
$\boldsymbol{A}:=(A_1,A_2)$ as in Definition \ref{Rieszdef1} with $n=2$.
The \emph{sharpness} of the edge strength and continuity
of  images related to LCRTs, $R^{\rm E}_{\rm sc}(M)$, is defined 
by setting
$$R^{\rm E}_{\rm sc}(M):=\frac{b}{c}.$$
\end{definition}

On the LCRTs $\{R_1^{\boldsymbol{A}},R_2^{\boldsymbol{A}}\}$, using the 
sharpnesses $\{R^{\rm E}_{\rm sc}(A_1),R^{\rm E}
_{\rm sc}(A_2)\}$ of the edge strength and continuity
of  images related to LCRTs introduced  in Definition \ref{is},
we find their following fundamental properties.

\begin{theorem}\label{p-sharp}
Let  $\boldsymbol A:=(A_1,A_2)$
with $A_j:=\begin{bmatrix}
		{a_j}&{b_j}\\
		{c_j}&{d_j}
\end{bmatrix}\in{M_{2\times2}}(\mathbb{R})$
and both $b_j\ne0$ and $c_j\ne0$ for any $j\in\{1,2\}$, and
let the {\rm LCRT}s $\{R_1^{\boldsymbol A},R_2^{\boldsymbol A}\}$
be the same as in Definition \ref{Rieszdef1} with $n=2$.
Then the following assertions hold.
\begin{itemize}
\item[\rm{(i)}]  For any $f\in\mathscr{S}(\mathbb{R}^2)$ 
and $\boldsymbol x\in\mathbb R^2$, 
the {\rm LCRT}s $\{R_1^{\boldsymbol A},R_2^{\boldsymbol A}\}$
can be expressed as
\begin{equation}\label{eq51}
R_1^{\boldsymbol A}f(\boldsymbol x)={\widetilde{c}_2}
e^{-i\sum^{2}_{j=1}\frac{d_jx_j^2}{R^{\rm E}
_{\rm sc}(A_j)c_j}}\,{\rm p}.
{\rm v}.\int_{\mathbb{R}^2} \frac{({x_1}-{y_1})
f(\boldsymbol y) e^{i\sum^{2}_{j=1}\frac{a_jy_j^2}
{R^{\rm E}_{\rm sc}(A_j)c_j}}}{| \boldsymbol x-\boldsymbol y
|^{3}}\,d\boldsymbol{y}
\end{equation}
and
\begin{equation}\label{eq52}
R_2^{\boldsymbol A}f(\boldsymbol x)={\widetilde{c}_2}
e^{-i\sum^{2}_{j=1}\frac{d_jx_j^2}{R^{\rm E}_{\rm sc}(A_j)c_j}}\,{\rm p}.
{\rm v}.\int_{\mathbb{R}^2} \frac{({x_2}-{y_2})
f(\boldsymbol y) e^{i\sum^{2}_{j=1}\frac{a_jy_j^2}{R^{\rm E}
_{\rm sc}(A_j)c_j}}}{| \boldsymbol x-\boldsymbol y
|^{3}}\,d\boldsymbol{y},
\end{equation}
where ${\widetilde{c}_2}$ is the same as in Definition \ref{Rieszdef1}.
\item[\rm{(ii)}]
For any $f\in\mathscr{S}(\mathbb{R}^2)$  and $\boldsymbol x\in\mathbb R^2$, one has
$$\lim_{\genfrac{}{}{0pt}{}{R^{\rm E}_{\rm sc}
(A_1)\rightarrow-1,\,a_1\rightarrow0,\,d_1\rightarrow0}{R^{\rm E}_{\rm sc}
(A_2)\rightarrow-1,\,a_2\rightarrow0,\,d_2\rightarrow0}}R_1^{\boldsymbol A}
f(\boldsymbol x)=R_1f(\boldsymbol x)
\ \ \text{and}\ \ 
\lim_{\genfrac{}{}{0pt}{}{R^{\rm E}_{\rm sc}
(A_1)\rightarrow-1,\,a_1\rightarrow0,\,d_1\rightarrow0}{R^{\rm E}_{\rm sc}
(A_2)\rightarrow-1,\,a_2\rightarrow0,\,d_2\rightarrow0}}R_2^{\boldsymbol A}
f(\boldsymbol x)=R_2f(\boldsymbol x).$$
\item[\rm{(iii)}]
For  any $p\in (1, \infty)$ and $f \in L^p(\mathbb{R}^2)$, one has
$$\lim_{\genfrac{}{}{0pt}{}{R^{\rm E}_{\rm sc}
(A_1)\rightarrow-1,\,a_1\rightarrow0,\,d_1\rightarrow0}{R^{\rm E}_{\rm sc}
(A_2)\rightarrow-1,\,a_2\rightarrow0,\,d_2\rightarrow0}}
 \|R_1^{\boldsymbol{A}}f - R_1f\|_{L^p(\mathbb{R}^2)} = 0
 \ \ \text{and} \ \ \lim_{\genfrac{}{}{0pt}{}{R^{\rm E}_{\rm sc}
 (A_1)\rightarrow-1,\,a_1\rightarrow0,\,d_1\rightarrow0}{R^{\rm E}_{\rm sc}
 (A_2)\rightarrow-1,\,a_2\rightarrow0,\,d_2\rightarrow0}} \|R_1^{\boldsymbol{A}}
 f - R_1f\|_{L^p(\mathbb{R}^2)} = 0.$$
 \item[\rm{(iv)}] 
For  any $p\in (1, \infty)$ and $f \in L^p(\mathbb{R}^2)$, one has
$$\lim_{\genfrac{}{}{0pt}{}{R^{\rm E}_{\rm sc}
(A_1)\rightarrow-1,\,a_1\rightarrow0,\,d_1\rightarrow0}{R^{\rm E}_{\rm sc}
(A_2)\rightarrow-1,\,a_2\rightarrow0,\,d_2\rightarrow0}}R_1^{\boldsymbol A}
f=R_1f
\ \ \text{and}\ \ 
\lim_{\genfrac{}{}{0pt}{}{R^{\rm E}_{\rm sc}
(A_1)\rightarrow-1,\,a_1\rightarrow0,\,d_1\rightarrow0}{R^{\rm E}_{\rm sc}
(A_2)\rightarrow-1,\,a_2\rightarrow0,\,d_2\rightarrow0}}R_2^{\boldsymbol A}
f=R_2f$$
in $ \mathscr{S}'(\mathbb{R}^2)$.
 \end{itemize}
\end{theorem}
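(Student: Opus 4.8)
The plan is to base all four assertions on the factorization already extracted in the proof of Theorem \ref{Lp}, namely that for $f\in\mathscr{S}(\mathbb{R}^2)$ and $j\in\{1,2\}$,
$$
R_j^{\boldsymbol A}(f)(\boldsymbol x)=e_{\boldsymbol b,-\boldsymbol d}(\boldsymbol x)\,R_j\!\left(e_{\boldsymbol b,\boldsymbol a}f\right)(\boldsymbol x),
$$
where $R_j$ is the classical Riesz transform, together with one elementary observation about the limit. Since $A_j\in M_{2\times2}(\mathbb{R})$ has determinant $1$, one has $a_jd_j-b_jc_j=1$; hence letting $a_j\to0$ and $d_j\to0$ forces $b_jc_j\to-1$, and then $R^{\rm E}_{\rm sc}(A_j)=b_j/c_j\to-1$ forces $b_j^{2}=(b_jc_j)(b_j/c_j)\to1$. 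So, along any sequence of parameter matrices realizing the stated limit, $b_j$ stays bounded and bounded away from $0$; consequently $e_{\boldsymbol b,\boldsymbol a}\to1$ and $e_{\boldsymbol b,-\boldsymbol d}\to1$ locally uniformly. Part (i) is then just bookkeeping: since $b_j=R^{\rm E}_{\rm sc}(A_j)\,c_j$ by definition, substituting this into the two chirp exponents in the integral representation of Definition \ref{Rieszdef1} (with $n=2$) produces precisely \eqref{eq51}--\eqref{eq52}, for both $R_1^{\boldsymbol A}$ and $R_2^{\boldsymbol A}$.

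For part (ii) I would fix $\boldsymbol x\in\mathbb{R}^2$ and write, using linearity of $R_j$,
$$
R_j^{\boldsymbol A}f(\boldsymbol x)-R_jf(\boldsymbol x)=e_{\boldsymbol b,-\boldsymbol d}(\boldsymbol x)\,R_j\!\left((e_{\boldsymbol b,\boldsymbol a}-1)f\right)(\boldsymbol x)+\left(e_{\boldsymbol b,-\boldsymbol d}(\boldsymbol x)-1\right)R_jf(\boldsymbol x).
$$
The last term tends to $0$ by the observation above, so it suffices to show $R_j(g_{\boldsymbol a})(\boldsymbol x)\to0$ with $g_{\boldsymbol a}:=(e_{\boldsymbol b,\boldsymbol a}-1)f$. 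The idea is to split the principal-value integral defining $R_j(g_{\boldsymbol a})(\boldsymbol x)$ at $|\boldsymbol x-\boldsymbol y|=1$: on the inner region the oddness cancellation $\int_{\varepsilon<|\boldsymbol x-\boldsymbol y|<1}(x_j-y_j)|\boldsymbol x-\boldsymbol y|^{-3}\,d\boldsymbol y=0$ lets one replace $g_{\boldsymbol a}(\boldsymbol y)$ by $g_{\boldsymbol a}(\boldsymbol y)-g_{\boldsymbol a}(\boldsymbol x)$, so the integrand is bounded by $C\,\|\nabla g_{\boldsymbol a}\|_{L^\infty(B(\boldsymbol x,1))}\,|\boldsymbol x-\boldsymbol y|^{-1}$, and $\|\nabla g_{\boldsymbol a}\|_{L^\infty(B(\boldsymbol x,1))}$ stays bounded as $\boldsymbol a\to\boldsymbol 0$ because $\nabla e_{\boldsymbol b,\boldsymbol a}$ carries the prefactor $a_j/b_j$; on the outer region the integrand is bounded by $2\,|f(\boldsymbol y)|\,|\boldsymbol x-\boldsymbol y|^{-2}$. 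Both majorants are integrable and independent of the parameters, while $g_{\boldsymbol a}\to0$ pointwise, so dominated convergence gives $R_j(g_{\boldsymbol a})(\boldsymbol x)\to0$. (Alternatively, one checks directly that $e_{\boldsymbol b,\boldsymbol a}f\to f$ in $\mathscr{S}(\mathbb{R}^2)$ and uses the continuity of $R_j$ from $\mathscr{S}(\mathbb{R}^2)$ into $C_0(\mathbb{R}^2)$.)

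For part (iii) I would take $L^p(\mathbb{R}^2)$ norms in the same decomposition; using $|e_{\boldsymbol b,-\boldsymbol d}|\equiv1$ and the $L^p$-boundedness of the classical Riesz transform ($1<p<\infty$) gives
$$
\left\|R_j^{\boldsymbol A}f-R_jf\right\|_{L^p(\mathbb{R}^2)}\le C\left\|(e_{\boldsymbol b,\boldsymbol a}-1)f\right\|_{L^p(\mathbb{R}^2)}+\left\|(e_{\boldsymbol b,-\boldsymbol d}-1)R_jf\right\|_{L^p(\mathbb{R}^2)}.
$$
Here $f\in L^p(\mathbb{R}^2)$ and $R_jf\in L^p(\mathbb{R}^2)$, while $|e_{\boldsymbol b,\boldsymbol a}-1|\le2$ and $|e_{\boldsymbol b,-\boldsymbol d}-1|\le2$ with both chirps tending to $1$ almost everywhere; hence the two integrands on the right are dominated by $2^{p}|f|^{p}$ and $2^{p}|R_jf|^{p}$ respectively, and dominated convergence, applied along an arbitrary sequence of parameters realizing the limit, shows both terms vanish. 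This proves (iii) for $R_1^{\boldsymbol A}$ and, identically, for $R_2^{\boldsymbol A}$.

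Part (iv) then follows at once from part (iii): for $f\in L^p(\mathbb{R}^2)$ the functions $R_j^{\boldsymbol A}f$ and $R_jf$ all lie in $L^p(\mathbb{R}^2)\subset\mathscr{S}'(\mathbb{R}^2)$, and for every $\varphi\in\mathscr{S}(\mathbb{R}^2)\subset L^{p'}(\mathbb{R}^2)$ Hölder's inequality gives $|\langle R_j^{\boldsymbol A}f-R_jf,\varphi\rangle|\le\|R_j^{\boldsymbol A}f-R_jf\|_{L^p(\mathbb{R}^2)}\,\|\varphi\|_{L^{p'}(\mathbb{R}^2)}\to0$ by (iii), so $R_j^{\boldsymbol A}f\to R_jf$ in $\mathscr{S}'(\mathbb{R}^2)$. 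The one genuinely delicate point is part (ii): the estimates controlling the principal-value integral must be uniform in the moving parameters $(a_j,b_j,d_j)$, and this is precisely where one uses that the determinant constraint pins $b_j$ away from $0$ and $\infty$ and that the $a_j/b_j$ prefactor in $\nabla e_{\boldsymbol b,\boldsymbol a}$ tends to $0$; everything else is routine.
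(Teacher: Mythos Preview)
Your proposal is correct and follows essentially the same route as the paper: part (i) is the same substitution $b_j=R^{\rm E}_{\rm sc}(A_j)c_j$; part (ii) uses the identical split at $|\boldsymbol x-\boldsymbol y|=1$ with the oddness cancellation on the inner piece and a direct majorant on the outer piece; and part (iv) is the same H\"older argument. Two small differences are worth recording. First, you make explicit that the determinant constraint together with $a_j,d_j\to0$ and $b_j/c_j\to-1$ forces $|b_j|\to1$ (hence $a_j/b_j,d_j/b_j\to0$); the paper uses this implicitly when it asserts that $\frac{|a_j|}{|R^{\rm E}_{\rm sc}(A_j)||c_j|}\to0$. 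Second, for (iii) the paper first proves the $L^p$ convergence for $f\in\mathscr{S}(\mathbb{R}^2)$ and then runs a separate $\varepsilon$--$\delta$ density argument, whereas you apply dominated convergence directly to $\|(e_{\boldsymbol b,\boldsymbol a}-1)f\|_{L^p}$ and $\|(e_{\boldsymbol b,-\boldsymbol d}-1)R_jf\|_{L^p}$ for general $f\in L^p$; this is legitimate once one notes (as follows immediately from the density definition of $R_j^{\boldsymbol A}$ on $L^p$ in Theorem~\ref{Lp}) that the factorization $R_j^{\boldsymbol A}f=e_{\boldsymbol b,-\boldsymbol d}R_j(e_{\boldsymbol b,\boldsymbol a}f)$ persists for all $f\in L^p(\mathbb{R}^2)$, and it streamlines the argument.
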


\begin{proof}
To show (i), using Definitions \ref{cf}, \ref{Rieszdef1}, and \ref{is}, 
we derive \eqref{eq51} and \eqref{eq52}, which completes the proof of (i).

In order to prove (ii), by symmetry we only need to consider 
the case $R_1^{\boldsymbol A}$. To this end,  fix $f\in\mathscr{S}(\mathbb{R}^2)$ and 
$\boldsymbol x\in\mathbb R^2$. We obviously have
\begin{align*}
&\left|{\rm p}.{\rm v}.\int_{\mathbb{R}^2} \frac{({x_1}-{y_1})
f(\boldsymbol y) e^{i\sum^{2}_{j=1}\frac{a_jy_j^2}
{R^{\rm E}_{\rm sc}(A_j)c_j}}}{| \boldsymbol x-\boldsymbol y
|^{3}}\,d\boldsymbol{y}-{\rm p}.{\rm v}.\int_{\mathbb{R}^2} \frac{({x_1}-{y_1})
f(\boldsymbol y) }{|\boldsymbol x-\boldsymbol y
|^{3}}\,d\boldsymbol{y}\right|\\
&\quad\leq\left|{\rm p}.{\rm v}.\int_{|\boldsymbol x-\boldsymbol y|\leq 1} \frac{({x_1}-{y_1})
f(\boldsymbol y)[e^{i\sum^{2}_{j=1}\frac{a_jy_j^2}
{R^{\rm E}_{\rm sc}(A_j)c_j}}-1]}{| \boldsymbol x-\boldsymbol y
|^{3}}\,d\boldsymbol{y}\right|+\left|\int_{|\boldsymbol x-\boldsymbol y|>1} 
\cdots\,d\boldsymbol{y}\right|\\
&\quad=:H_1+H_2.
\end{align*}

We first estimate $H_2$. For this purpose, using the mean value theorem, we obtain, 
for any $\boldsymbol y\in\mathbb{R}^2$,
\begin{equation}\label{cpzzdl}
\left|e^{i\sum^{2}_{j=1}\frac{a_jy_j^2}
{R^{\rm E}_{\rm sc}(A_j)c_j}}-1\right|\leq 
\left|\sum^{2}_{j=1}\frac{2a_j\theta y_j}
{R^{\rm E}_{\rm sc}(A_j)c_j}\right|\left| e^{i\sum^{2}_{j=1}
\frac{a_j(\theta y_j)^2}
{R^{\rm E}_{\rm sc}(A_j)c_j}}\right|\left| \boldsymbol y\right|\leq
\left[\sum^{2}_{j=1}\frac{2|a_j||y_j|}
{|R^{\rm E}_{\rm sc}(A_j)||c_j|}\right]\left| \boldsymbol y\right|,
\end{equation}
where $\theta\in [0, 1]$.
Substituting this bound into the expression for $H_2$, we find that
\begin{align*}
0\leq H_2\leq&\sum^{2}_{j=1}\int_{|\boldsymbol x-\boldsymbol y|>1}
\frac{|f(\boldsymbol y)2a_jy_j|| \boldsymbol y|}{| \boldsymbol x-\boldsymbol y
|^{2}|R^{\rm E}_{\rm sc}(A_j)||c_j|}\,d\boldsymbol{y}\\
\leq&\sum^{2}_{j=1}\frac{2|a_j|}
{|R^{\rm E}_{\rm sc}(A_j)||c_j|}\int_{\mathbb R^2}
\left|f(\boldsymbol y)\right||y_j|\left| \boldsymbol y\right|\,d\boldsymbol{y}\to 0
\end{align*}
as $R^{\rm E}_{\rm sc}(A_j) \to -1$ and $a_j \to 0$ for any
$j\in\{1,2\}$,  which further implies that $H_2 \to 0$ 
as $R^{\rm E}_{\rm sc}(A_j) \to -1$ and $a_j \to 0$ 
for any $j\in\{1,2\}$.

Then we estimate  $H_1$. Again, by the mean value 
theorem and \eqref{cpzzdl}, we conclude that, 
for any $\boldsymbol y\in\mathbb{R}^2$ satisfying 
$|\boldsymbol x-\boldsymbol y|\leq1$,
\begin{align*}
&\left|f(\boldsymbol y) \left[e^{i\sum^{2}_{j=1}\frac{a_jy_j^2}
{R^{\rm E}_{\rm sc}(A_j)c_j}}-1\right]-f(\boldsymbol x) 
\left[e^{i\sum^{2}_{j=1}\frac{a_jx_j^2}
{R^{\rm E}_{\rm sc}(A_j)c_j}}-1\right]\right|\\
&\quad=\left|\left[f(\boldsymbol y)- f(\boldsymbol x)\right]
\left[e^{i\sum^{2}_{j=1}\frac{a_jy_j^2}
{R^{\rm E}_{\rm sc}(A_j)c_j}}-1\right]+f(\boldsymbol x) 
\left[e^{i\sum^{2}_{j=1}\frac{a_jy_j^2}
{R^{\rm E}_{\rm sc}(A_j)c_j}}-e^{i\sum^{2}_{j=1}\frac{a_jx_j^2}
{R^{\rm E}_{\rm sc}(A_j)c_j}}\right]\right|\\
&\quad\leq|\boldsymbol y-\boldsymbol x|\left\{\left|\nabla
 f\left[\theta_1 \boldsymbol y+(1-\theta_1)\boldsymbol x \right]\right|
 \left[\sum^{2}_{j=1}\frac{2|a_j||y_j|}
 {|R^{\rm E}_{\rm sc}(A_j)||c_j|}\right]\left| \boldsymbol y\right|\right.\\
 &\left.\qquad+
 \left| f\left(\boldsymbol x\right)\right|
 \left|\sum^{2}_{j=1}\frac{2a_j[\theta_2 y_j+(1-\theta_2) x_j]}
{R^{\rm E}_{\rm sc}(A_j)c_j}\right|\left|e^{i\sum^{2}_{j=1}
 \frac{a_j[\theta_2 y_j+(1-\theta_2) x_j]^2}{R^{\rm E}_{\rm sc}(A_j)c_j}}\right|\right\}\\
&\quad\leq\left\{\|\, |\nabla f|\,\|_{L^\infty(\mathbb R^2)}
(1+|\boldsymbol x|)^2+|f(\boldsymbol x)|(1+2|\boldsymbol x|)
\right\}|\boldsymbol y-\boldsymbol x|\sum^{2}_{j=1}\frac{2|a_j|}
 {|R^{\rm E}_{\rm sc}(A_j)||c_j|}\\
&\quad=:\,C_{(f, \boldsymbol x)}|\boldsymbol y-\boldsymbol x|\sum^{2}_{j=1}\frac{2|a_j|}
{|R^{\rm E}_{\rm sc}(A_j)||c_j|},
\end{align*}
where $\theta_1,\theta_2\in [0, 1]$.
By the definitions of both $H_1$ and the principle integral, together with 
the vanishing moment of  $x_1-y_1$ in the kernel of $H_1$ and
the last estimate, we find that 
\begin{align*}
0\leq H_1=&\left|\int_{|\boldsymbol x-\boldsymbol y|\le1} \frac{({x_1}-{y_1})
\{f(\boldsymbol y) [e^{i\sum^{2}_{j=1}\frac{a_jy_j^2}
{R^{\rm E}_{\rm sc}(A_j)c_j}}-1]-f(\boldsymbol x) [e^{i\sum^{2}_{j=1}\frac{a_jx_j^2}
{R^{\rm E}_{\rm sc}(A_j)c_j}}-1]\}}{|\boldsymbol x-\boldsymbol y
|^{3}}\,d\boldsymbol{y}\right|\\
\leq&C_{(f, \boldsymbol x)}\sum^{2}_{j=1}\frac{2|a_j|}
{|R^{\rm E}_{\rm sc}(A_j)||c_j|}
 \int_{|\boldsymbol x-\boldsymbol y|\le1} \frac{1}
 {| \boldsymbol x-\boldsymbol y|}\,d\boldsymbol{y}\to 0
\end{align*}
as $R^{\rm E}_{\rm sc}(A_j) \to -1$ and $a_j \to 0$ for 
$j\in\{1,2\}$,  which further implies that $H_1 \to 0$ as 
$R^{\rm E}_{\rm sc}(A_j) \to -1$ and $a_j \to 0$ for any $j\in\{1,2\}$.
Combining the estimates for $H_1$ and $H_2$, we finally conclude that
 $$\lim_{\genfrac{}{}{0pt}{}{R^{\rm E}_{\rm sc}
(A_1)\rightarrow-1,\,a_1\rightarrow0}{R^{\rm E}_{\rm sc}
(A_2)\rightarrow-1,\,a_2\rightarrow0}}
\left|{\rm p}.{\rm v}.\int_{\mathbb{R}^2} \frac{({x_1}-{y_1})
f(\boldsymbol y) e^{i\sum^{2}_{j=1}\frac{a_jy_j^2}
{R^{\rm E}_{\rm sc}(A_j)c_j}}}{| \boldsymbol x-\boldsymbol y
|^{3}}\,d\boldsymbol{y}-{\rm p}.{\rm v}.\int_{\mathbb{R}^2} 
\frac{({x_1}-{y_1})
f(\boldsymbol y) }{|\boldsymbol x-\boldsymbol y
|^{3}}\,d\boldsymbol{y}\right|=0.$$
Equivalently,
 $$\lim_{\genfrac{}{}{0pt}{}{R^{\rm E}_{\rm sc}
(A_1)\rightarrow-1,\,a_1\rightarrow0}{R^{\rm E}_{\rm sc}
(A_2)\rightarrow-1,\,a_2\rightarrow0}}{\rm p}.{\rm v}.\int_{\mathbb{R}^2} \frac{({x_1}-{y_1})
f(\boldsymbol y) e^{i\sum^{2}_{j=1}\frac{a_jy_j^2}
{R^{\rm E}_{\rm sc}(A_j)c_j}}}{| \boldsymbol x-\boldsymbol y
|^{3}}\,d\boldsymbol{y}={\rm p}.{\rm v}.\int_{\mathbb{R}^2} \frac{({x_1}-{y_1})
f(\boldsymbol y) }{| \boldsymbol x-\boldsymbol y|^{3}}\,d\boldsymbol{y}.$$
This and the continuity of exponential functions further imply that  
$$\lim_{\genfrac{}{}{0pt}{}{R^{\rm E}_{\rm sc}
(A_1)\rightarrow-1,\,a_1\rightarrow0,\,d_1\rightarrow0}{R^{\rm E}_{\rm sc}
(A_2)\rightarrow-1,\,a_2\rightarrow0,\,d_2\rightarrow0}}R_1^{\boldsymbol A}
f(\boldsymbol x)=R_1f(\boldsymbol x).$$
This finishes the proof of (ii).
	
To show (iii),  by the symmetry again, we only consider $R_1^{\boldsymbol{A}}$.
For this purpose, we first assume $f\in\mathscr{S}(\mathbb R^2)$. Then, by the Minkowski norm inequality of $L^p(\mathbb{R}^2)$, for any  $f\in\mathscr{S}(\mathbb R^2)$ we obviously have 
\begin{align*}
& \|R_1^{\boldsymbol{A}}f - R_1f\|_{L^p(\mathbb{R}^2)}\\
&\quad=\left\|R_1^{\boldsymbol{A}}f - e^{-i\sum^{2}_{j=1}
\frac{d_j (\cdot_j)^2}{R^{\rm E}_{\rm sc}(A_j)c_j}}R_1f\right\|_{L^p(\mathbb{R}^2)}
+\left\|e^{-i\sum^{2}_{j=1}	\frac{d_j(\cdot_j)^2}
{R^{\rm E}_{\rm sc}(A_j)c_j}}R_1f-R_1f\right\|_{L^p(\mathbb{R}^2)}
=:\mathrm{I}_1+\mathrm{I}_2.
\end{align*} 
For ${\rm I}_1$, from (i), the  boundedness of the classical
 Riesz transform on  $L^p(\mathbb{R}^n)$ (see, for example, 
 \cite[Corollary 5.2.8]{g20141}) and the Lebesgue dominated 
 convergence theorem, we infer that
 \begin{align*}
 0\leq {\rm I}_1\leq\left\|R_1\left\{
\left[ e^{-i\sum^{2}_{j=1}\frac{a_j (\cdot_j)^2}{R^{\rm E}
_{\rm sc}(A_j)c_j}}-1\right]f\right\}\right\|_{L^p(\mathbb{R}^2)}
\lesssim\left\|
\left[e^{i\sum^{2}_{j=1}\frac{a_j (\cdot_j)^2}{R^{\rm E}
_{\rm sc}(A_j)c_j}}-1\right]f\right\|_{L^p(\mathbb{R}^2)}\to 0
 \end{align*}
as $R^{\rm E}_{\rm sc}(A_1), R^{\rm E}_{\rm sc}(A_2)\to -1$ and
$a_1,a_2\to 0$, which further implies that 
 $$\lim_{\genfrac{}{}{0pt}{}{R^{\rm E}_{\rm sc}
(A_1)\rightarrow-1,\,a_1\rightarrow0}{R^{\rm E}_{\rm sc}
(A_2)\rightarrow-1,\,a_2\rightarrow0}}{\rm I}_1=0.$$
For ${\rm I}_2$, by $R_1f\in L^p(\mathbb{R}^2)$
and the Lebesgue dominated convergence theorem, we obtain
 \begin{align*}
0\leq {\rm I}_2
\leq\left\|
\left[e^{-i\sum^{2}_{j=1}\frac{d_j (\cdot_j)^2}{R^{\rm E}
_{\rm sc}(A_j)c_j}}-1\right]R_1f\right\|_{L^p(\mathbb{R}^2)}\to 0
 \end{align*}
as $R^{\rm E}_{\rm sc}(A_1), R^{\rm E}_{\rm sc}(A_2)\to -1$ and
$d_1,d_2\to 0$, which further implies that 
 $$\lim_{\genfrac{}{}{0pt}{}{R^{\rm E}_{\rm sc}
(A_1)\rightarrow-1,\,d_1\rightarrow0}{R^{\rm E}_{\rm sc}
(A_2)\rightarrow-1,\,d_2\rightarrow0}}{\rm I}_2=0.$$
Combining the estimates for ${\rm I}_1$ and ${\rm I}_2$, we conclude that
\begin{equation}\label{5.4}
\lim_{\genfrac{}{}{0pt}{}{R^{\rm E}_{\rm sc}
(A_1)\rightarrow-1,\,a_1\rightarrow0,\,d_1\rightarrow0}{R^{\rm E}_{\rm sc}
(A_2)\rightarrow-1,\,a_2\rightarrow0,\,d_2\rightarrow0}} \|R_1^{\boldsymbol{A}}
f - R_1f\|_{L^p(\mathbb{R}^2)} = 0.
\end{equation}

Now, let $p\in (1,\infty)$ and $f\in L^p(\mathbb{R}^n)$. Since $\mathscr{S}( \mathbb{R}^n)$ 
is dense in $L^p(\mathbb{R}^n)$  (see, for example, \cite[p. 20]{sw1971}), 
we deduce that, for any $\epsilon\in (0,\infty)$, there exist $g \in \mathscr{S}( \mathbb{R}^n)$  
such that $\|f-g\|_{L^p(\mathbb{R}^2)}<\epsilon$. 
For this $g$, using \eqref{5.4}, we find that there exists $\delta\in (0,\infty)$ such that,
when $|R^{\rm E}_{\rm sc}(A_i)+1|<\delta$, $|a_i|<\delta$,
and $|d_i|<\delta$ for any $i\in\{1,2\}$,    $\|R_1^{\boldsymbol{A}}
g - R_1g\|_{L^p(\mathbb{R}^2)}<\epsilon$. Then, from the 
Minkowski norm inequality of $L^p(\mathbb{R}^2)$,   
the boundedness of the LCRT on  $L^p(\mathbb{R}^n)$ in Theorem \ref{Lp}, 
and the  boundedness of the classical Riesz transform on  $L^p(\mathbb{R}^n)$ 
(see, for example, \cite[Corollary 5.2.8]{g20141}), it follows that, 
when $|R^{\rm E}_{\rm sc}(A_i)+1|<\delta$, $|a_i|<\delta$,
and $|d_i|<\delta$ for any $i\in\{1,2\}$,
\begin{align*}
\left\|R_1^{\boldsymbol{A}}f - R_1f\right\|_{L^p(\mathbb{R}^2)}
&=\left\|R_1^{\boldsymbol{A}}f-R_1^{\boldsymbol{A}}
g+R_1^{\boldsymbol{A}}g+R_1g-R_1g - R_1f\right\|_{L^p(\mathbb{R}^2)}\\
&\leq \left\|R_1^{\boldsymbol{A}}(f-g)\right\|_{L^p(\mathbb{R}^2)}
+\left\|R_1^{\boldsymbol{A}}g-R_1g\right\|_{L^p(\mathbb{R}^2)}
+\left\|R_1(g-f)\right\|_{L^p(\mathbb{R}^2)}\\
&\lesssim \left\|f-g\right\|_{L^p(\mathbb{R}^2)}
+\left\|R_1^{\boldsymbol{A}}g-R_1g\right\|_{L^p(\mathbb{R}^2)}\lesssim \epsilon,
\end{align*} 
which further implies that 
$$\lim_{\genfrac{}{}{0pt}{}{R^{\rm E}_{\rm sc}
(A_1)\rightarrow-1,\,a_1\rightarrow0,\,d_1\rightarrow0}{R^{\rm E}_{\rm sc}
(A_2)\rightarrow-1,\,a_2\rightarrow0,\,d_2\rightarrow0}} \|R_1^{\boldsymbol{A}}
f - R_1f\|_{L^p(\mathbb{R}^2)} = 0,$$
which completes the proof of (iii).

To prove (iv), by symmetry we also only consider 
the case $R_1^{\boldsymbol A}$. For any 
$p\in (1, \infty)$,  $f \in L^p(\mathbb{R}^2)$, 
and $\phi \in \mathscr{S}(\mathbb{R}^2)$, 
we need to show that
$$\lim_{\genfrac{}{}{0pt}{}{R^{\rm E}_{\rm sc}
(A_1)\rightarrow-1,\,a_1\rightarrow0,\,d_1\rightarrow0}{R^{\rm E}_{\rm sc}
(A_2)\rightarrow-1,\,a_2\rightarrow0,\,d_2\rightarrow0}}
\langle R_1^{\boldsymbol A}f - R_1f, \phi \rangle 
= \lim_{\genfrac{}{}{0pt}{}{R^{\rm E}_{\rm sc}
(A_1)\rightarrow-1,\,a_1\rightarrow0,\,d_1\rightarrow0}{R^{\rm E}_{\rm sc}
(A_2)\rightarrow-1,\,a_2\rightarrow0,\,d_2\rightarrow0}}\int_{\mathbb{R}^2} 
\left[R_1^{\boldsymbol A}f(\boldsymbol{x})
- R_1f(\boldsymbol{x}) \right] \phi(\boldsymbol{x}) \, d\boldsymbol{x} = 0.$$
To this end, let $q$ be the conjugate exponent of $p$, that is, $1/p + 1/q = 1$. Since $\phi \in \mathscr{S}(\mathbb{R}^2) 
  \subset L^q(\mathbb{R}^2)$, by H\"{o}lder's inequality, we obtain
\begin{equation}\label{5.5}
\left| \int_{\mathbb{R}^2} \left[R_1^{\boldsymbol A}f(\boldsymbol{x})
- R_1f(\boldsymbol{x}) \right] \phi (\boldsymbol{x})\, 
d\boldsymbol{x} \right| \leq \left\| R_1^{\boldsymbol A}f - 
R_1f \right\|_{L^p(\mathbb{R}^2)} \|\phi\|_{L^q(\mathbb{R}^2)}.
\end{equation}
By (iii), as $ R^{\rm E}_{\rm sc}(A_j) \rightarrow -1$,  $a_j \rightarrow 0$,
and $d_j \rightarrow 0$ for any $ j\in\{1,2\}$, one has 
$\| R_1^{\boldsymbol A}f - R_1f \|_{L^p(\mathbb{R}^2)} \to 0$, 
which, together with \eqref{5.5} further implies the desired conclusion. 
This  finishes the  proof of (iv) and hence Theorem \ref{p-sharp}.
 \end{proof}
\begin{remark}\label{r-chirp}
	\begin{itemize}
		\item[(i)] The convergence in (ii) of Theorem \ref{p-sharp} is pointwise, not uniform. 
To see this, we only need to observe that, for any $\boldsymbol x \in \mathbb{R}^2$, 
as $a \to 0$, $e^{i a x^2}$ converges pointwise to 1. However, this 
convergence is not uniform. Consequently,  in Theorem \ref{p-sharp}(ii), 
when $a_j=0$,  $R^{\rm E}_{\rm sc}(A_j)\rightarrow -1$, and 
$d_j\to 0$ for  $j\in\{1,2\}$,   $R_1^{\boldsymbol A}f$  
does not converge uniformly to $R_1f$.	
\item[(ii)] From (i), we further infer that Theorem \ref{p-sharp}(iii) with $p=\infty$ does not hold. For any  $f\in\mathscr{S}(\mathbb{R}^2)\subset L^\infty(\mathbb{R}^2)$, 
by   \cite[Definition 5.1.13 and Theorem 2.3.20]{g20141}  
and \eqref{eq51}, we conclude that $R_1f,R_1^{\boldsymbol A}f\in C^{\infty}
(\mathbb{R}^2)$, which further implies that 
\begin{equation}\label{eq5.6}
\lim_{\genfrac{}{}{0pt}{}{R^{\rm E}_{\rm sc}
(A_1)\rightarrow-1,\,a_1\rightarrow0,\,d_1\rightarrow0}{R^{\rm E}_{\rm sc}
(A_2)\rightarrow-1,\,a_2\rightarrow0,\,d_2\rightarrow0}}
 \|R_1^{\boldsymbol{A}}f - R_1f\|_{L^\infty(\mathbb{R}^2)}=
 \lim_{\genfrac{}{}{0pt}{}{R^{\rm E}_{\rm sc}
(A_1)\rightarrow-1,\,a_1\rightarrow0,\,d_1\rightarrow0}{R^{\rm E}_{\rm sc}
(A_2)\rightarrow-1,\,a_2\rightarrow0,\,d_2\rightarrow0}}
 \sup_{\boldsymbol x\in \mathbb{R}^2}
 \left|R_1^{\boldsymbol{A}}f - R_1f\right|.
\end{equation}
To make the right-hand side of \eqref{eq5.6} equal  0, it is required that 
$R_1^{\boldsymbol{A}}$ converges uniformly to $R_1$, 
which contradicts (i). Therefore, $p=\infty$ in 
Theorem \ref{p-sharp}(iii) does not hold.

We now show that Theorem \ref{p-sharp}(iii) with $p=1$ also dose not hold.
We only  consider this in the case $n=1$, that is, the LCHT 
$H_A$ does not  converge to the 
Hilbert transform $H$ in $L^1(\mathbb R)$. Let  $A:=\begin{bmatrix}
{a}&{b}\\
{c}&{d}
\end{bmatrix}\in{M_{2\times2}}(\mathbb{R})$
and both $b\ne0$ and $c\ne0$.
When $a=0$,  $R^{\rm E}_{\rm sc}(A)\rightarrow -1$, and 
$d\to 0$,   $H^A$  
does not converge to $H$ in $L^1(\mathbb R)$.  To prove this,
let $f:=\mathbf{1}_{[0,1]}\in L^1(\mathbb R)$.   
For any $x\in\mathbb R$, we have
 $Hf(x)=\frac{1}{\pi}\ln\left|\frac{x}{x-1}\right|$ 
and $\lim_{x\to \infty}x\ln\left|\frac{x}
{x-1}\right|=1$. This implies that there exists 
 $N\in \mathbb{N}$ such that, for any $x>N$,   $x\ln\left|\frac{x}
{x-1}\right|\sim 1$.  Then, by this, we further have
\begin{align*}
\left\|H^Af-Hf\right\|_{L^1(\mathbb R)}&=\left\|e^\frac{{-id(\cdot)^2}}{R^{\rm E}_{\rm sc}(A)c}Hf-Hf\right\|_{L^1(\mathbb R)}=\frac{1}{\pi}\left\|\left[e^\frac{{-id(\cdot)^2}}{R^{\rm E}_{\rm sc}(A)c}-1\right]
\ln\left|\frac{\cdot}{\cdot-1}\right|\right\|_{L^1(\mathbb R)}\\
&\geq\frac{1}{\pi}\int_N^{\infty}|e^\frac{{-idx^2}}{R^{\rm E}_{\rm sc}(A)c}-1|\ln\left|\frac{x}{x-1}\right| 
\,dx\sim\frac{1}{\pi}\int_N^{\infty}\frac{|e^\frac{{-idx^2}}{R^{\rm E}_{\rm sc}(A)c}-1|}{x}
\,dx\\&=\frac{1}{\pi}\int_N^{\infty}\frac{2|\sin(\frac{dx^2}{2R^{\rm E}_{\rm sc}(A)c})
|}{x}\,dx=\frac{1}{\pi}\int_{\frac{dN^2}{2R^{\rm E}_{\rm sc}(A)c}}^{\infty}\frac{|\sin x|}x
\,dx\\&\geq\frac{1}{\pi}\sum_{k=K}^{\infty}\int_{k\pi}^{(k+1)\pi}\frac{|\sin x|}x
\,dx\geq\frac{1}{\pi^2}\sum_{k=K}^{\infty}\frac{1}{k+1}\int_{k\pi}^{(k+1)\pi}|\sin x|
\,dx \\&\geq\frac{1}{\pi^2}\sum_{k=K}^{\infty}\frac{2}{k+1}=\infty,
\end{align*}
where $K:=\lfloor\frac{dN^2}{2R^{\rm E}_{\rm sc}(A)c\pi}\rfloor+1$. 
This implies that Theorem \ref{p-sharp}(iii) fails for $p=1$. 
Thus, the range $p\in (1,\infty)$ of Theorem \ref{p-sharp}(iii) is sharp.
\end{itemize}
\end{remark}
Using the sharpness $R^{\rm E}_{\rm sc}$ of the edge strength and continuity
of  images related to LCRTs introduced  in Definition \ref{is} and also using
Theorem \ref{p-sharp}, we next give a new method of refining image edge detection,
call  the  {\bf LCRT image edge detection method} (for short,
{\bf LCRT-IED method}) as follows. Take Figure \ref{FIG10} as an example.
Graph (a) of Figure \ref{FIG10} is the given original image;
using the classical Riesz transform, we extract the global
edge information from Graph (a) of Figure \ref{FIG10} to
obtain Graph (f) of  Figure \ref{FIG10} which is our target
image [in this case, $R^{\rm E}_{\rm sc}(A_1)=-1=R^{\rm E}
_{\rm sc}(A_2)$]. Via suitably choosing parameter matrices $A_1$ 
or $A_2$ appearing in the LCRTs $\{R_1^{\boldsymbol{A}},R_2^
{\boldsymbol{A}}\}$  with $\boldsymbol{A}:=(A_1,A_2)$ as 
in Definition \ref{Rieszdef1} such that $R^{\rm E}_{\rm sc}(A_1)$ 
or $R^{\rm E}_{\rm sc}(A_2)$ converges increasingly closer to -1, 
we are able to extract the global edge information from Graph (a)
of Figure \ref{FIG10} to obtain Graphs (b) through (e) of Figure 
\ref{FIG10} which gradually approach our target image Graph (f) 
of  Figure \ref{FIG10}. This new method  might have applications in 
large-scale image matching, refinement of image feature extraction, 
and image refinement processing.

Theorem \ref{p-sharp} provides the solid {\bf theoretical  foundation} from harmonic analysis for  the above new ICRT-IED method. To understand this, we only need to observe  that, when $A_1=\begin{bmatrix}{0}\
&{1}\\{-1}\ &{0}\end{bmatrix}=A_2$,  in this case $R^{\rm E}_{\rm sc}
(A_1)=-1=R^{\rm E}_{\rm sc}(A_2)$ and the LCRTs
 $\{R_1^{\boldsymbol{A}},R_2^{\boldsymbol{A}}\}$ reduce back
to classical Riesz transforms.

In Figure \ref{FIG10}, Graph (a) is the
original test image for edge detection;  Graphs (b), (c), 
(d), (e), and (f) correspond to the edge detection images 
via the LCRT-IED method by using,
respectively,  parameter matrices
$M_1:=(m_1,m_2)$ with $m_1:=\begin{bmatrix}{0}\
&{1}\\{-1}\ &{0}\end{bmatrix}$
and $m_2:=\begin{bmatrix}{0}
\ &{1000}\\{-0.001}\ &{0}\end{bmatrix}$,
$M_2:=(m_1,m_3)$ with $m_3:=\begin{bmatrix}{0}
\ &{500}\\{-0.002}\ &{0}\end{bmatrix}$,
$M_3:=(m_1,m_4)$ with $m_4:=\begin{bmatrix}{0}\ &{250}
\\{-0.004}\ &{0}\end{bmatrix}$,  
$M_4:=(m_1,m_5)$ with $m_5:=\begin{bmatrix}{0}\ &{25}\\
{-0.04}\ &{0}\end{bmatrix}$, and
$M_5:=(m_1,m_1)$. In particular, when the parameter matrix is 
$M_5$, in this case the LCRT reduces to the classical Riesz 
transform. Graphs (b), (c), (d), and (e) demonstrate the gradual 
enhancement of  the edge strength 
and continuity in the edge images through the use 
of different parameter matrices of LCRT, which gradually 
approach our target image Graph (f) of Figure  \ref{FIG10}.

\begin{figure}[H]
\centering
\subfigcapskip=-30pt
\subfigure[]{\includegraphics[width=0.365\linewidth]{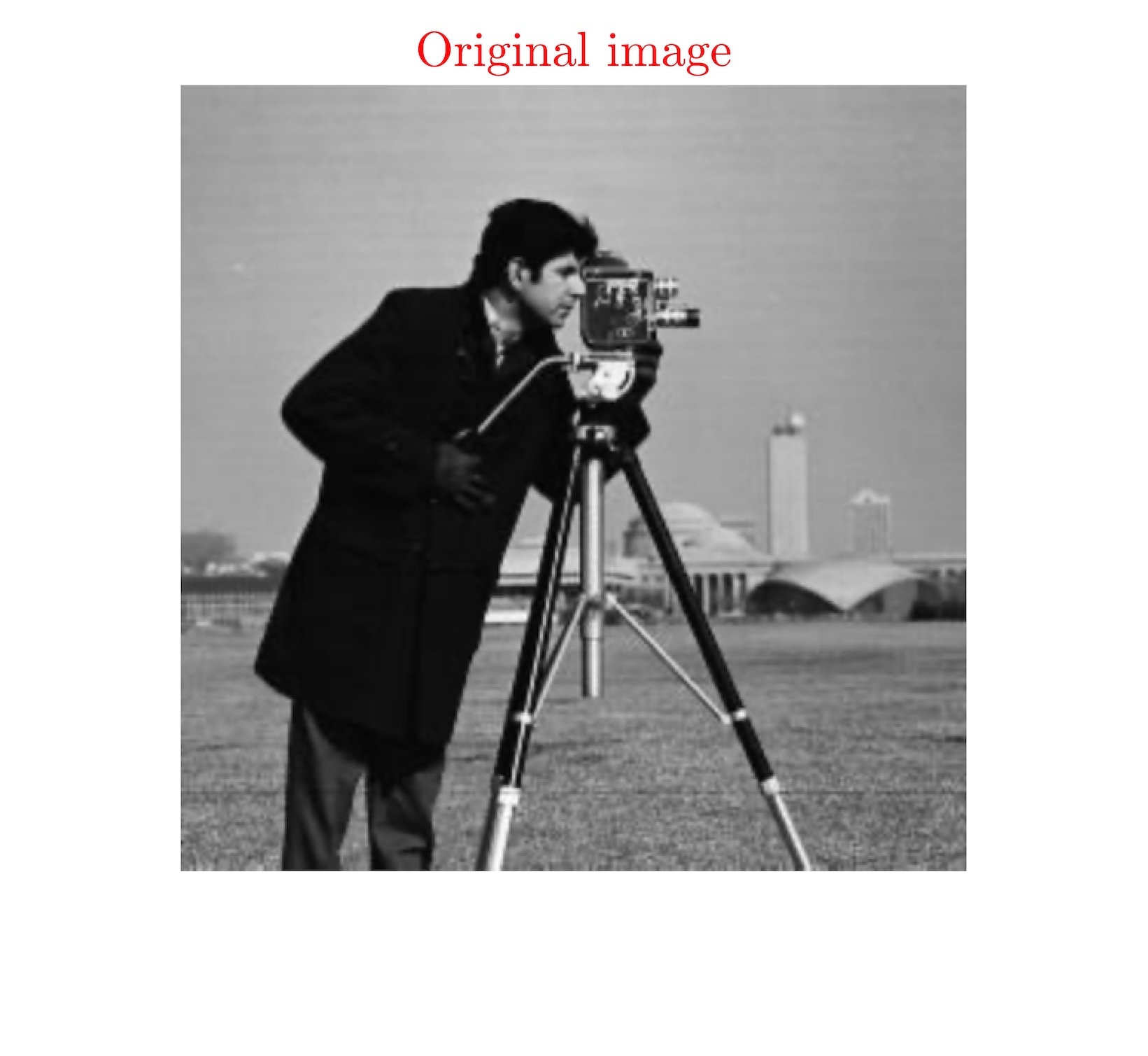}}
\hspace{-1cm}
\subfigure[]{\includegraphics[width=0.365\linewidth]{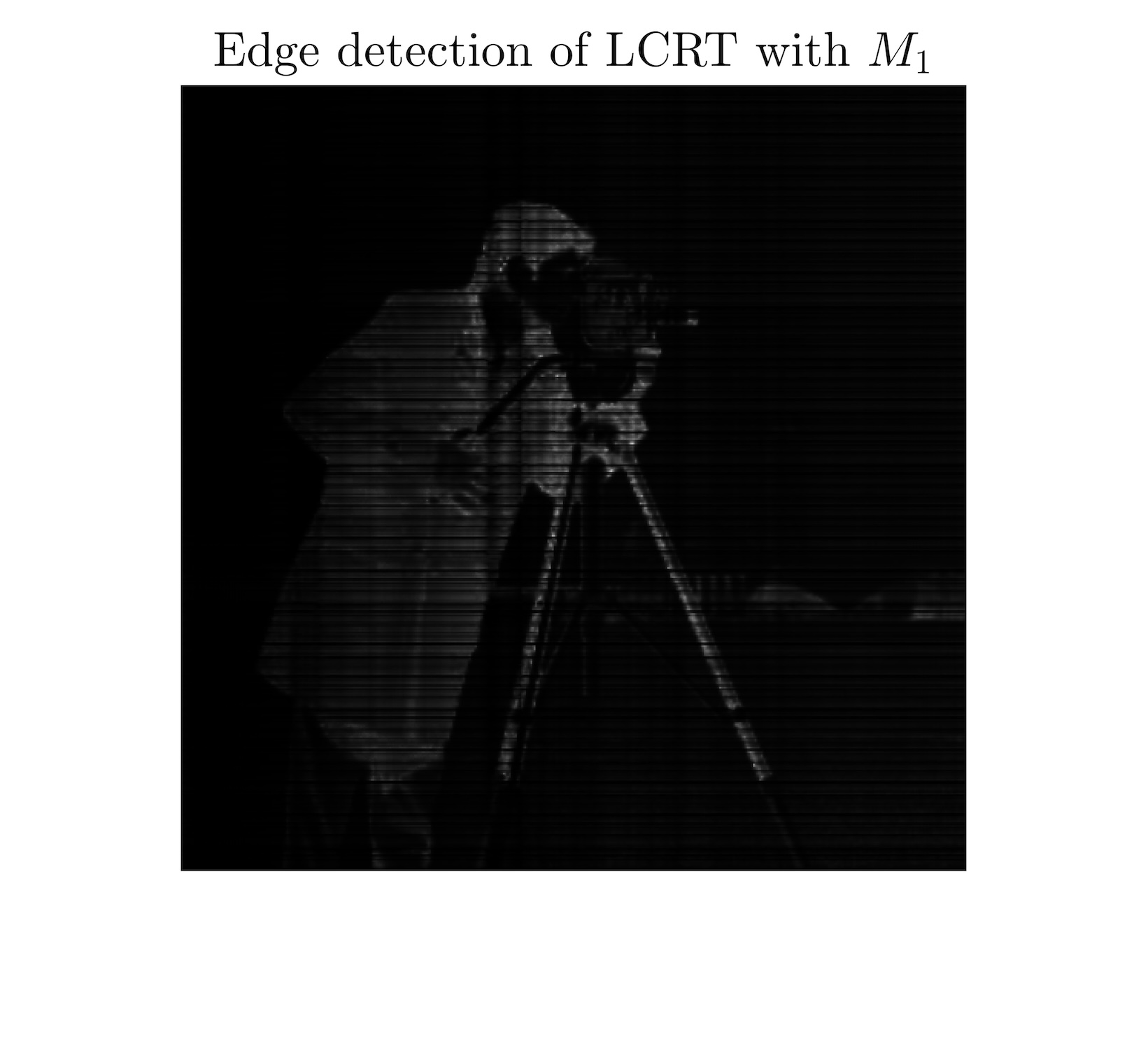}}
\hspace{-1cm}
\subfigure[]{\includegraphics[width=0.365\linewidth]{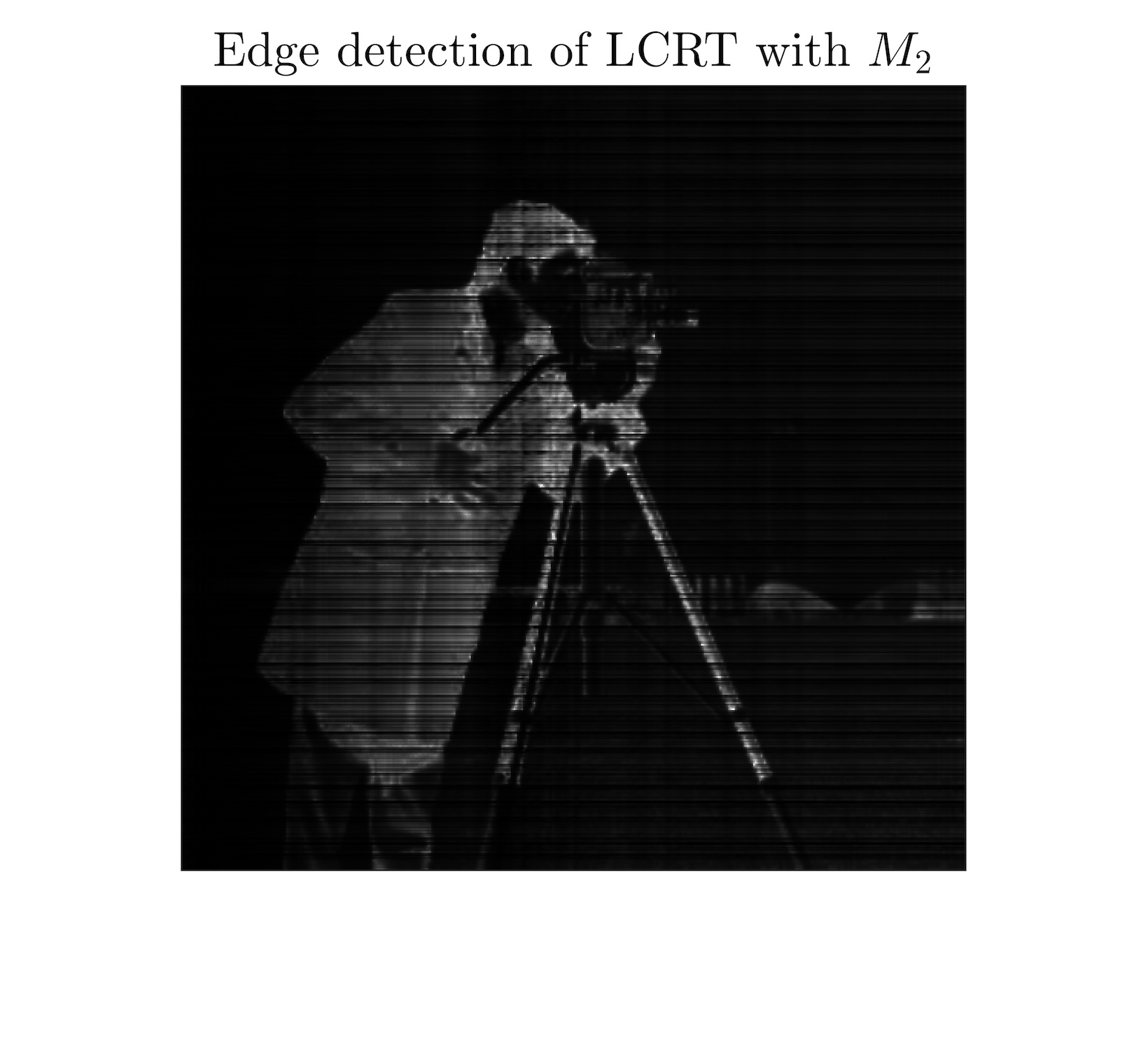}}\\
\vspace{-0.5cm}
\subfigure[]{\includegraphics[width=0.365\linewidth]{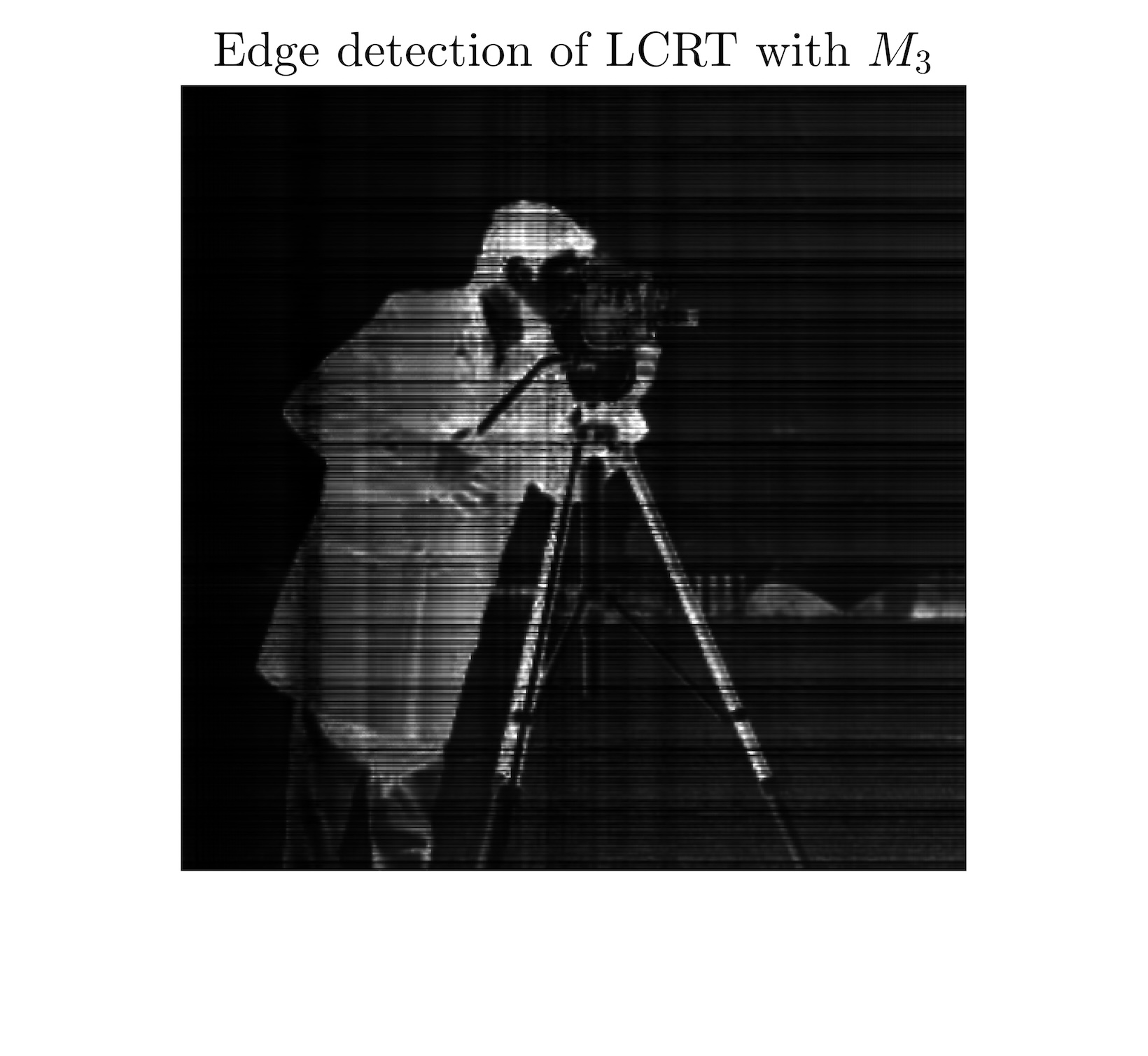}}
\hspace{-1cm}
\subfigure[]{\includegraphics[width=0.365\linewidth]{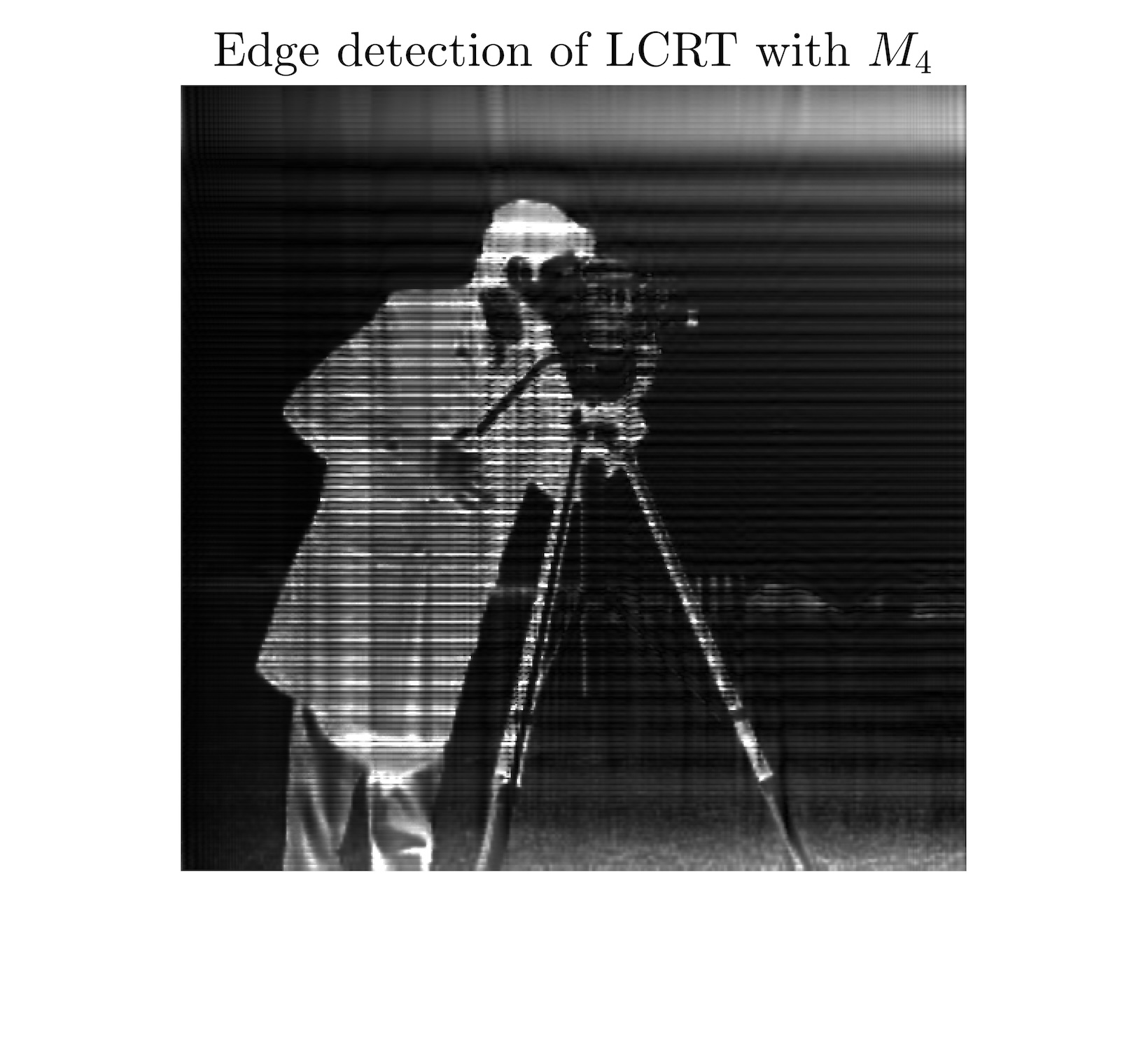}}
\hspace{-1cm}
\subfigure[]{\includegraphics[width=0.365\linewidth]{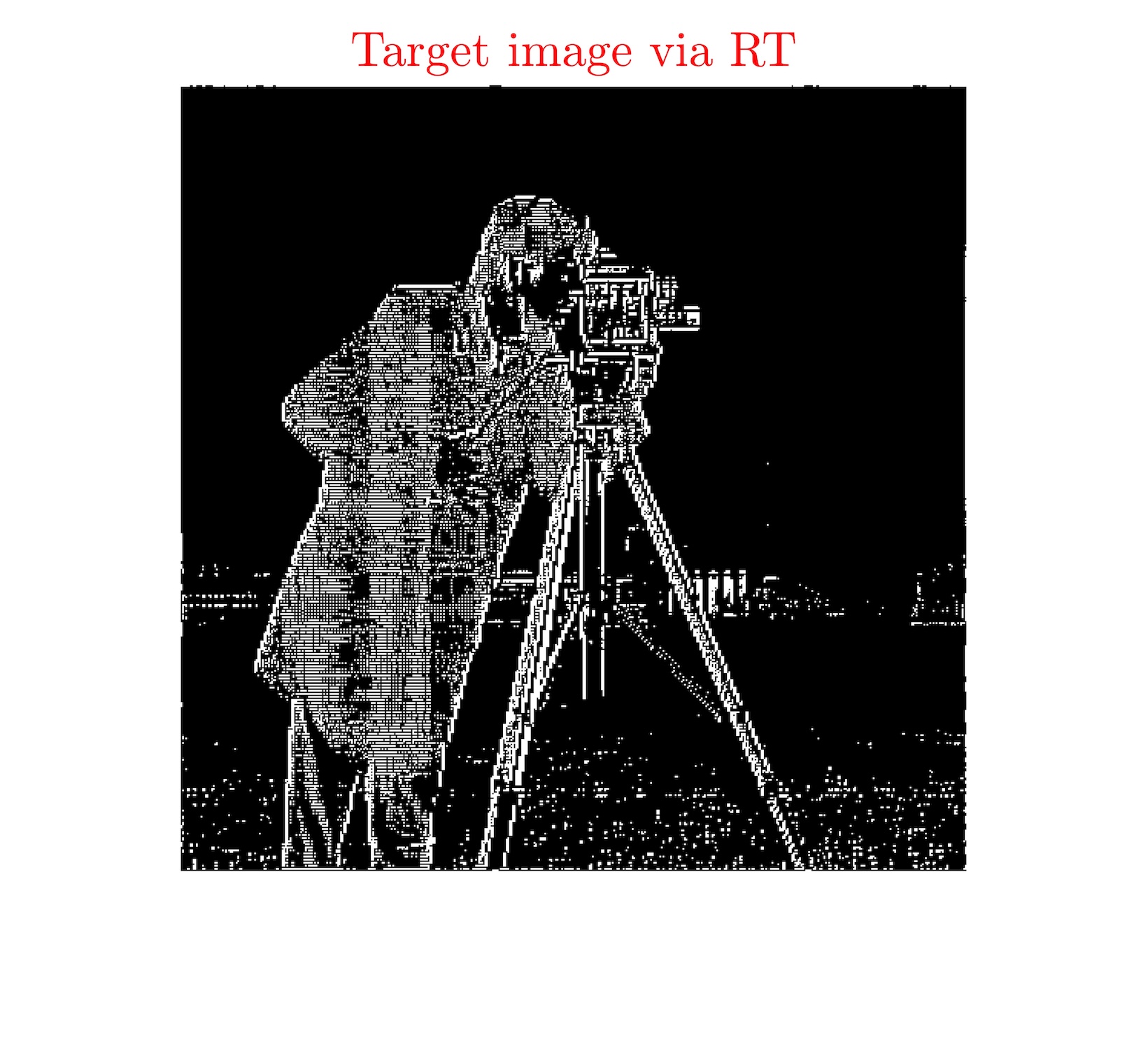}}
\vspace{-0.5cm}
\caption{Edge detection using LCRT-IED method on grayscale 
image Cameraman.}
\label{FIG10}
\end{figure}

\begin{table}[H]
\centering
\begin{tabular}{cccccc}
\toprule[1.5pt]
{\bf Graph} &   (b) &   (c) &   (d) &   (e) &  (f) \\
\midrule[1pt]
$R_{\mathrm{sc}}^{\mathrm{E}}(A_2)$ & -1000000 
& -250000 & -62500 & -625 & -1 \\
\midrule[1pt]
{\bf MSE} & 0.2625 & 0.2568 & 0.2562 & 0.22 & 0.1642 \\
\bottomrule[1.5pt]
\end{tabular}
\caption{$R_{\mathrm{sc}}^{\mathrm{E}}$ and 
MSE corresponding to the edge detection images in Figure 
\ref{FIG10}.}
\label{tab:1}
\end{table}

Table \ref{tab:1} presents $R_{\mathrm{sc}}^
{\mathrm{E}}$ corresponding to the edge detection images in 
Figure \ref{FIG10} and the mean squared error (for short, MSE)  
between the original image and the edge detection images in 
Figure \ref{FIG10}, where $R_{\mathrm{sc}}^{\mathrm{E}}$ 
is the same as in Definition \ref{is} and the MSE is 
the mean squared error of pixel values between two images 
which is used to measure the similarity between them.  
A smaller MSE indicates greater similarity between 
the two images. As shown in Table \ref{tab:1}, when 
$R_{\mathrm{sc}}^{\mathrm{E}}$ approaches $-1$,  the smaller 
MSE values between the original image and the edge detection 
images in Figure \ref{FIG10} imply higher similarity 
between the two images, indicating that the edge strength 
and continuity of the edge image are good.

We now investigate the effectiveness of the aforementioned introduced 
LCRT-IED method for local feature extraction in images.
 In Graph (a) of Figure \ref{FIG10-1}, we divide an image 
 into 9 equally-sized sub-regions. This segmentation aims to assess 
the local similarity between the edge detection images and the original 
image by comparing their MSEs within each sub-region.  
Graphs (b) and (c) in Figure \ref{FIG10-1} display, respectively, 
the MSEs between Graphs (a) and (e) and between Graphs (a) and (f)
of the aforementioned 9 sub-regions in Figure \ref{FIG10}.

\begin{figure}[H]
\centering
\subfigure[Image Segmentation]{\includegraphics[width=0.3\linewidth]{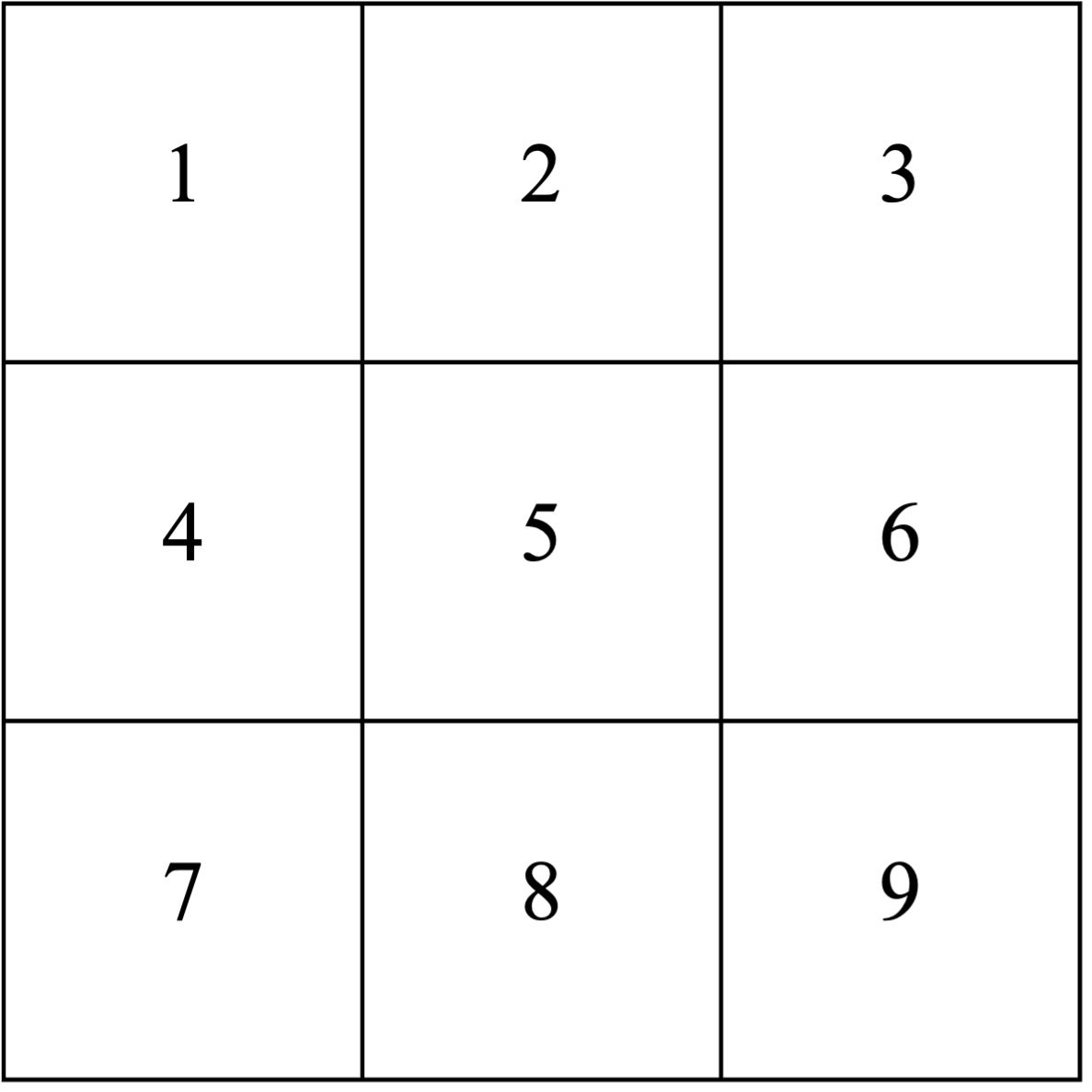}}
\hspace{0.2cm}
\subfigure[MSE of each sub-region between the Graphs (a) and (e) in 
Figure \ref{FIG10}.]{\includegraphics[width=0.3\linewidth]{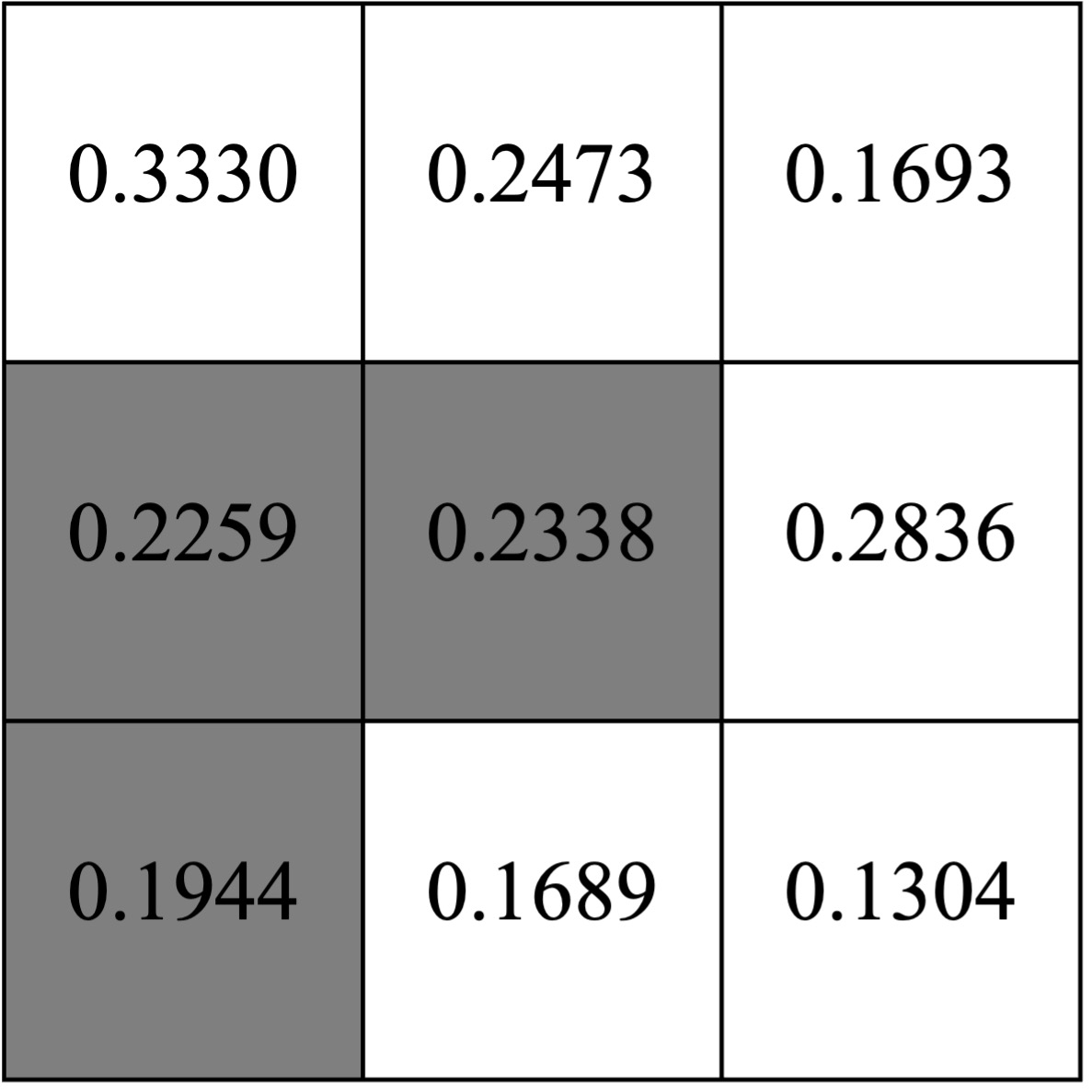}}
\hspace{0.2cm}
\subfigure[MSE of each sub-region between the Graphs (a) and (f) in 
Figure \ref{FIG10}.]{\includegraphics[width=0.3\linewidth]{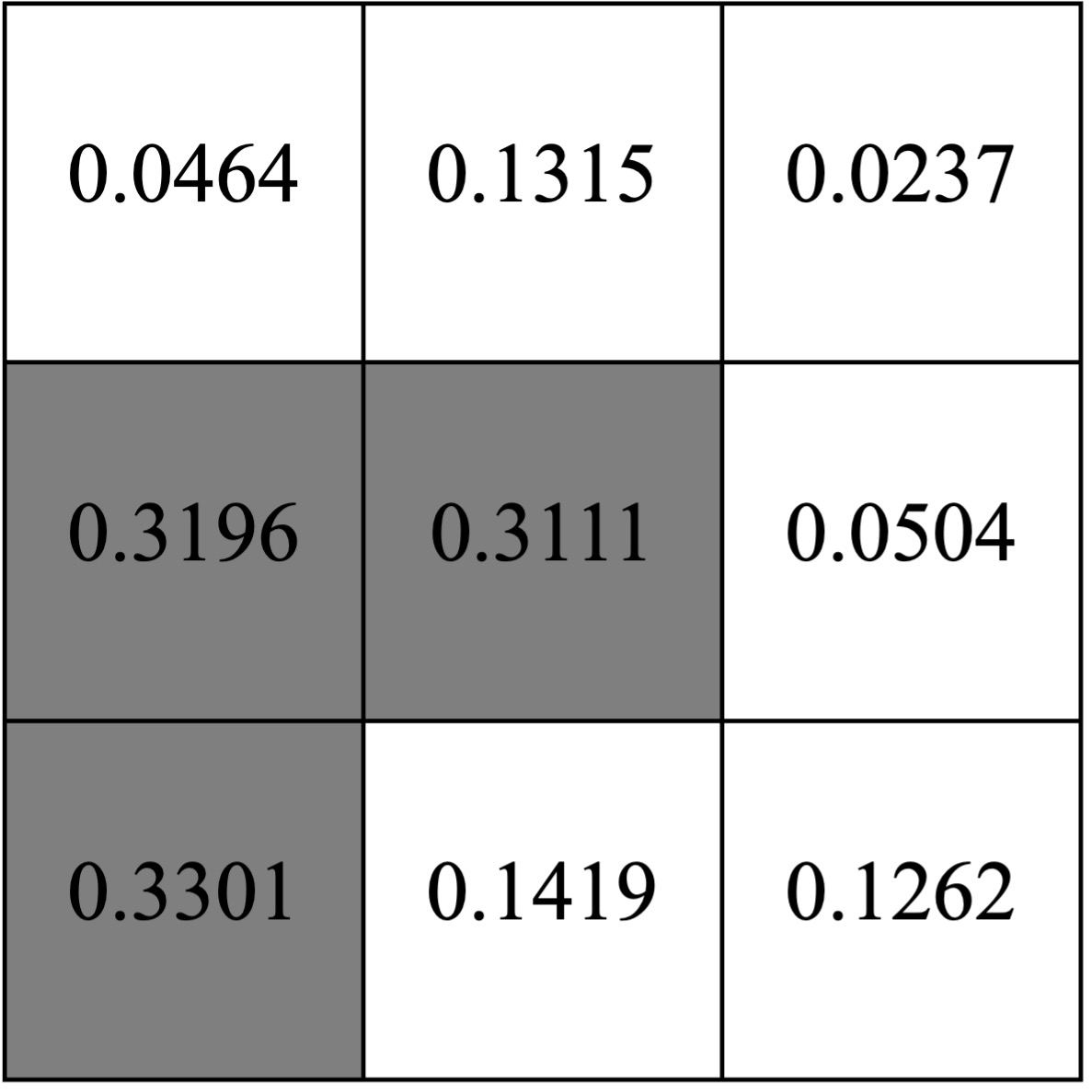}}
\caption{MSE between the original image and the edge detection images
in Figure \ref{FIG10} after image segmentation.}
\label{FIG10-1}
\end{figure}

The analysis of Table \ref{tab:1} reveals that the MSE 
between Graphs (a) and (f) in Figure \ref{FIG10} is significantly 
lower than the one between Graphs (a) and (e) in Figure \ref{FIG10}. 
This suggests that, when the LCRT reduces to the classical Riesz 
transform, it achieves refined global information extraction. Moreover, 
comparing Graphs (b) with (c) in Figure \ref{FIG10-1}, we find
an interesting phenomenon: for some sub-regions of the aforementioned 9 
sub-regions, their MSEs between Graphs (a) and (f) in Figure \ref{FIG10} 
surpasses those corresponding ones between Graphs (a) and (e) in 
Figure \ref{FIG10}. This interesting  phenomenon confirms that the 
LCRT-IED method possesses dual capabilities: while maintaining 
stable global edge detection through parameter matrix refinement, 
it simultaneously excels in local feature extraction, particularly in 
preserving fine-scale image details for some sub-regions.  This is a 
consequence of the convergence in Theorem \ref{p-sharp}(ii), which 
is pointwise rather than uniform 
[see Remark \ref{r-chirp}(i)]. To be precise, while the overall 
edge effect gradually approaches the best edge effect of the 
target image from below,  in some sub-regions the local edge 
effect outperforms the corresponding one of the target image.

To ensure both the reliability of the LCRT-IED method introduced above and 
the generalizability of the conclusions, we now repeat the 
aforementioned experimental procedure on the other grayscale image.

In Figure \ref{FIG11}, Graph (a) is the original test image 
for edge detection;   Graphs (b), (c), (d),  (e), and (f) correspond to the 
edge detection images via the LCRT-IED method by using, respectively, 
parameter matrices
$T_1:=(t_1,t_2)$ with $t_1:=\begin{bmatrix}{0}\
&{1}\\{-1}\ &{0}\end{bmatrix}$
and $t_2:=\begin{bmatrix}{100}
\ &{45}\\{22.2}\ &{100}\end{bmatrix}$,
$T_2:=(t_1,t_3)$ with $t_3:=\begin{bmatrix}{100}
\ &{70}\\{\frac{9999}{70}}\ &{100}\end{bmatrix}$,
$T_3:=(t_1,t_4)$ with $t_4:=\begin{bmatrix}{100}\ &{300}
\\{33.33}\ &{100}\end{bmatrix}$,  
$T_4:=(t_1,t_5)$ with $t_5:=\begin{bmatrix}{100}\ &{730}\\
{14.28}\ &{100}\end{bmatrix}$, and $T_5:=(t_1,t_1)$.
In particular, when the parameter matrix is $T_5$, 
in this case the LCRT reduces to the classical Riesz 
transform. Graphs (b), (c), (d), and (e) demonstrate the 
gradual enhancement of  the edge  strength 
and continuity in the edge images through using
different parameter matrices in the LCRT, which gradually 
approach our target image Graph (f) from below.

\begin{figure}[H]
\centering
\subfigcapskip=-30pt
\subfigure[]{\includegraphics[width=0.365\linewidth]{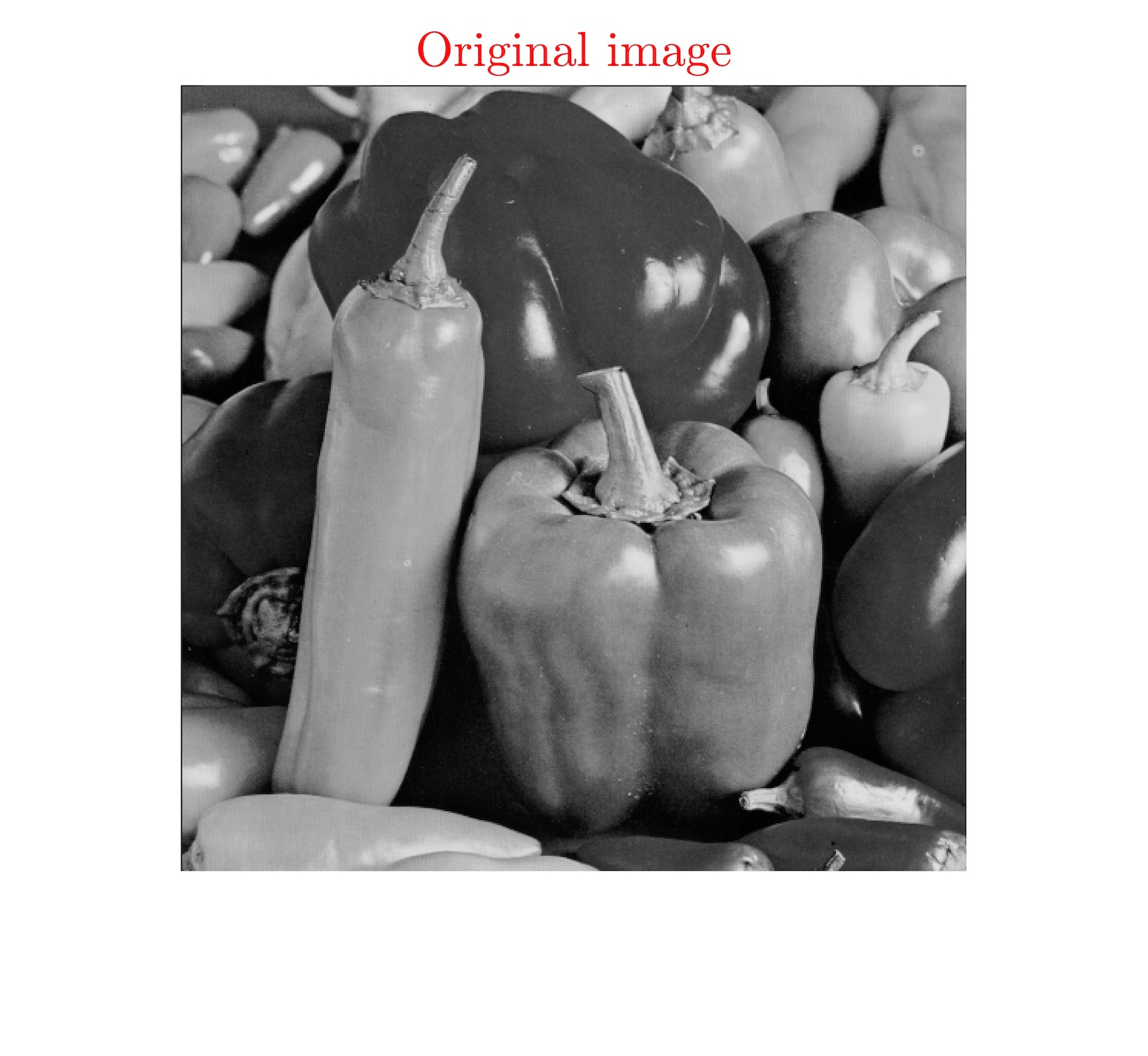}}
\hspace{-1cm}
\subfigure[]{\includegraphics[width=0.365\linewidth]{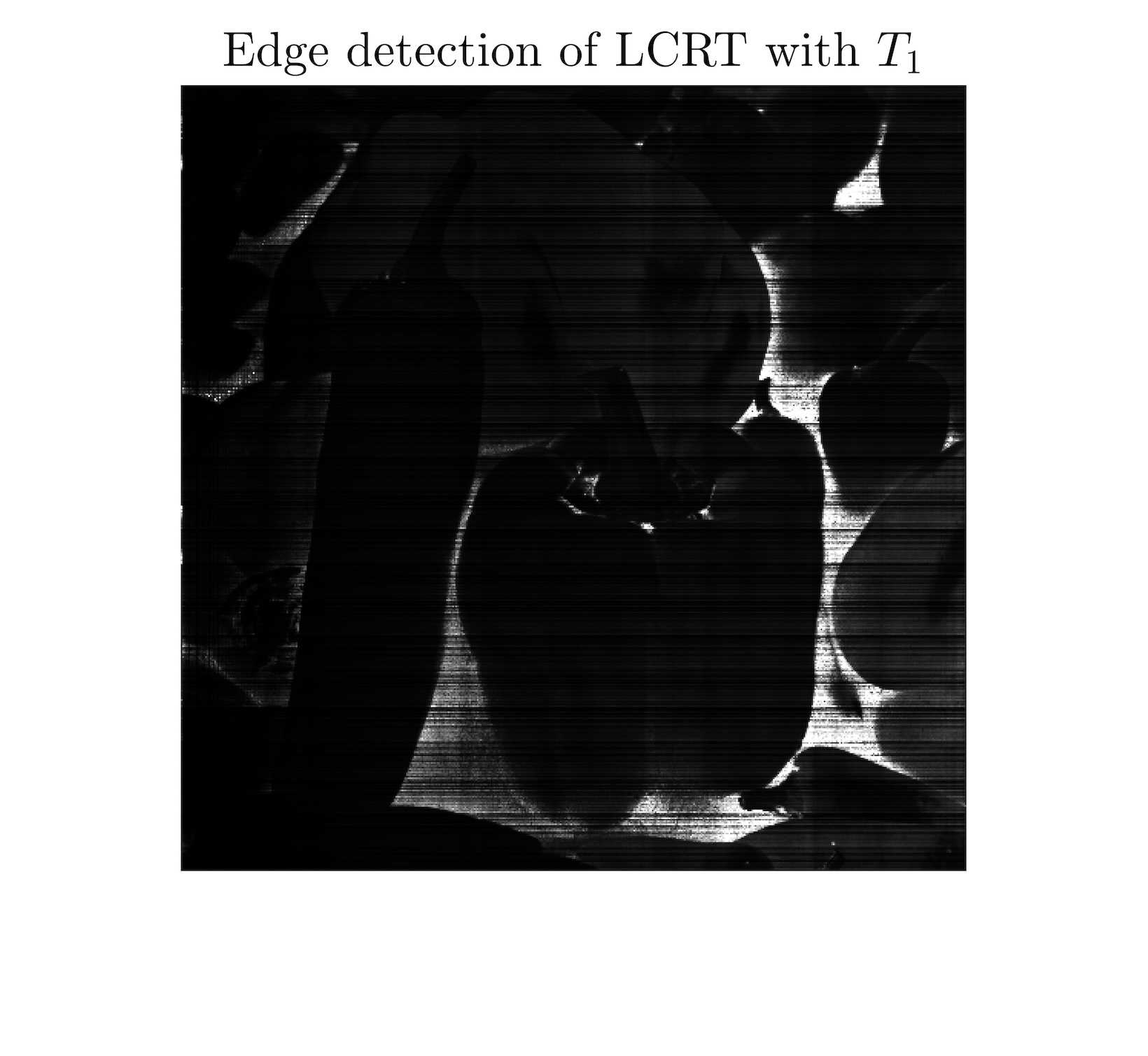}}
\hspace{-1cm}
\subfigure[]{\includegraphics[width=0.365\linewidth]{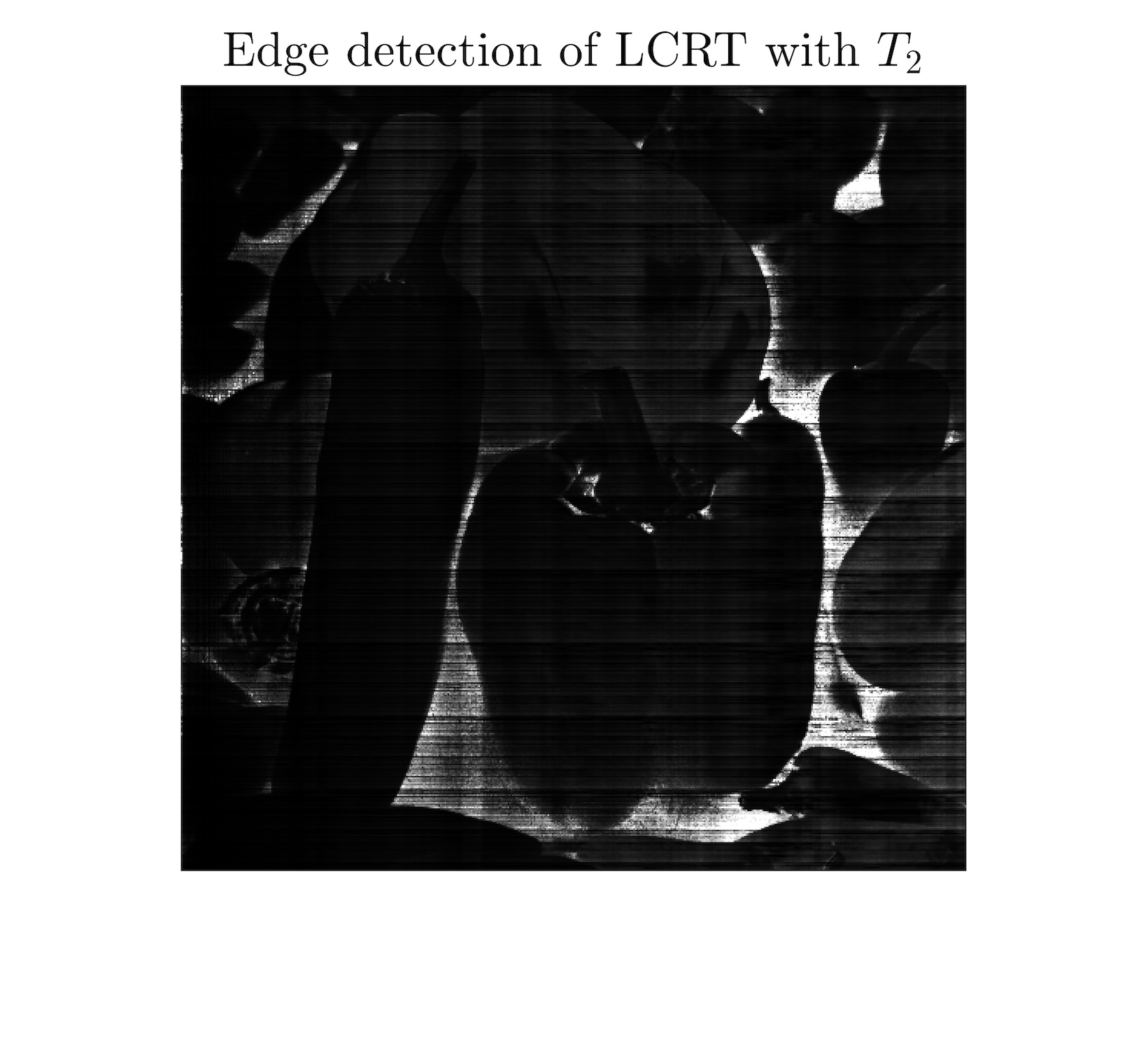}}\\
\vspace{-0.5cm}
\subfigure[]{\includegraphics[width=0.365\linewidth]{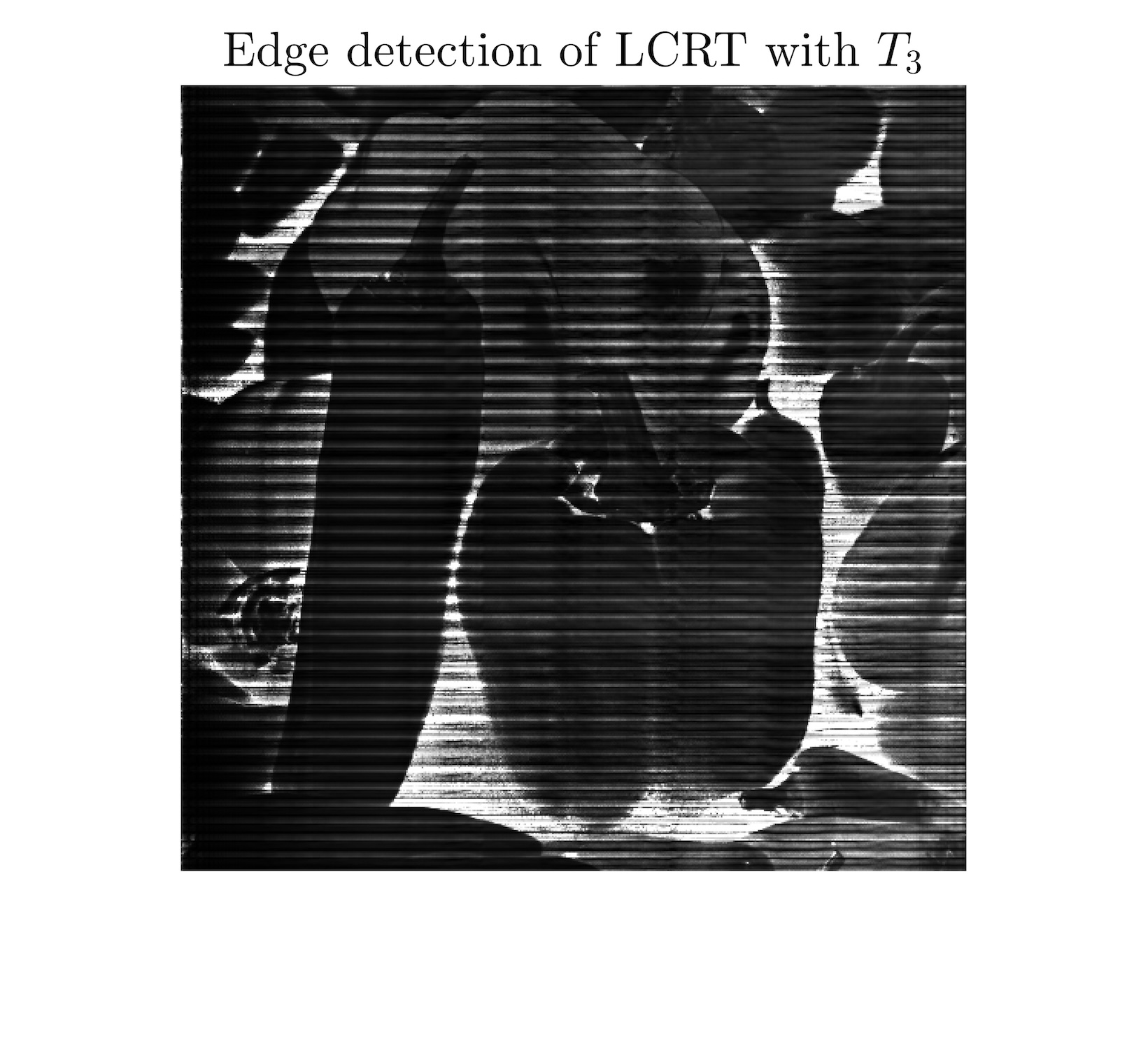}}
\hspace{-1cm}
\subfigure[]{\includegraphics[width=0.365\linewidth]{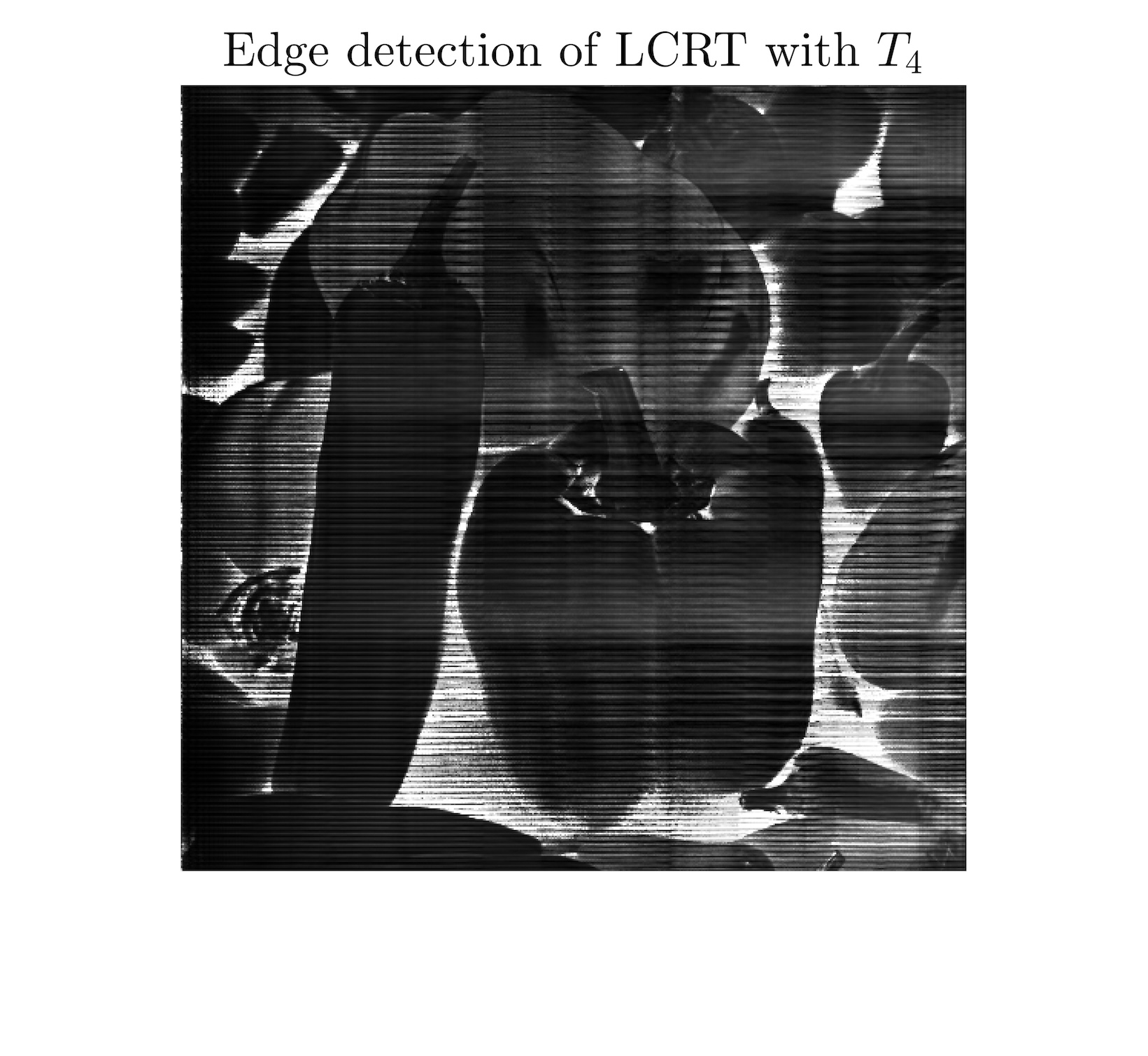}}
\hspace{-1cm}
\subfigure[]{\includegraphics[width=0.365\linewidth]{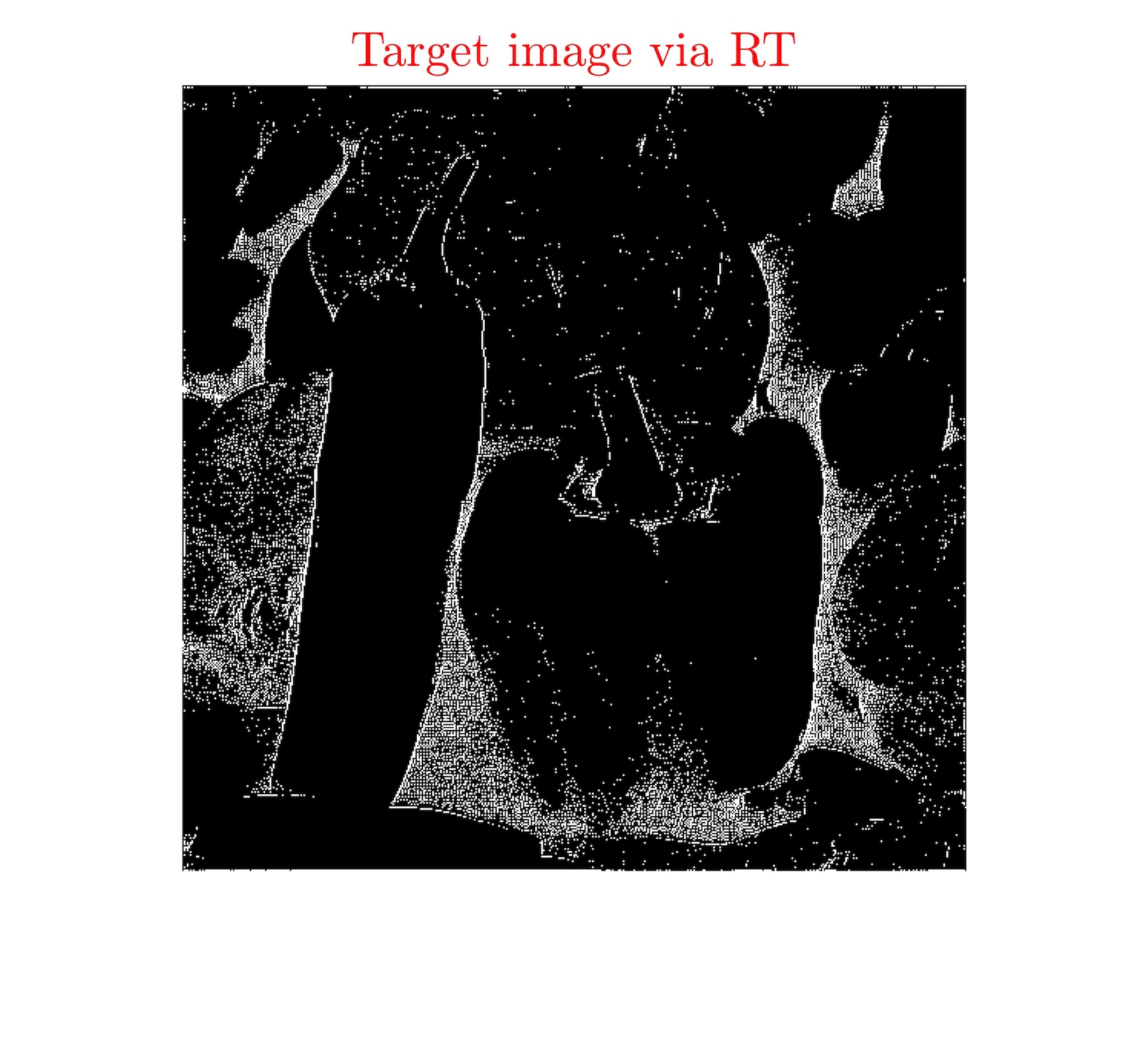}}
\vspace{-0.5cm}
\caption{Edge detection using LCRT-IED method on grayscale 
image Peppers.}
\label{FIG11}
\end{figure}

\begin{table}[H]
\centering
\begin{tabular}{cccccc}
\toprule[1.5pt]
{\bf Graph} &   (b) &   (c) &   (d) &   (e) &  (f) \\
\midrule[1pt]
{\bf  $R_{\mathrm{sc}}^{\mathrm{E}}(A_2)$} & 51.1024
& 9.0010 & 0.4901 & 0.2025 & -1 \\
\midrule[1pt]
{\bf MSE} & 0.2197 & 0.2163 & 0.2143 & 0.2009 & 0.1739 \\
\bottomrule[1.5pt]
\end{tabular}
\caption{$R_{\mathrm{sc}}^{\mathrm{E}}$ and 
MSE corresponding to the edge detection images in Figure 
\ref{FIG11}.}
\label{tab:2}
\end{table}

Table \ref{tab:2} presents $R_{\mathrm{sc}}^
{\mathrm{E}}$ corresponding to the edge detection images in 
Figure \ref{FIG11} and the MSE  
between the original image and the edge detection images in 
Figure \ref{FIG11}. As shown in Table \ref{tab:2}, as
$R_{\mathrm{sc}}^{\mathrm{E}}$ approaches $-1$,  
the MSE between the original image and the edge detection 
images in Figure \ref{FIG11} becomes increasingly smaller.

Again, we next investigate the effectiveness of the aforementioned 
introduced LCRT-IED method for local feature extraction in images. 
Graph (a) of Figure \ref{FIG11-1} demonstrates dividing an image 
into 9 equally-sized sub-regions. Graphs (b) and (c)
in Figure \ref{FIG11-1} display, respectively, the MSEs, corresponding 
to the 9 sub-regions of  Graph (a) of Figure \ref{FIG11-1}, between 
Graphs (a) and (e) of Figure \ref{FIG11} and between Graphs (a) 
and (f) of Figure \ref{FIG11}.

As demonstrated by the experimental results 
from Figures \ref{FIG11} and \ref{FIG11-1} and Table 
\ref{tab:2}, the findings of this study align consistently 
with the results presented in Figures \ref{FIG10} and 
\ref{FIG10-1} and Table \ref{tab:1}, thereby confirming 
that the LCRT-IED method exhibits  reliability and 
generalizability.

In Figures \ref{FIG10} and \ref{FIG11}, we perform
LCRT-IED method on two different grayscale images
using different parameter matrices. Indeed, our LCRT-IED  
method can also be directly applied to RGB images,
achieving similar experimental results. However, the
processing approach slightly differs when handling RGB
images. To be precise, we first divide the RGB image into
the red, the green, and the blue channels. Each color channel is
treated as a grayscale image, and we then apply the LCRT-IED method 
with the same parameter matrices to each channel. Finally, we 
recombine these three channels to form the RGB image.

\begin{figure}[H]
\centering
\subfigure[Image segmentation.]{\includegraphics[width=0.3\linewidth]{MSE.jpg}}
\hspace{0.2cm}
\subfigure[MSE of each sub-region between the Graphs (a) and (e) in 
Figure \ref{FIG11}.]{\includegraphics[width=0.3\linewidth]{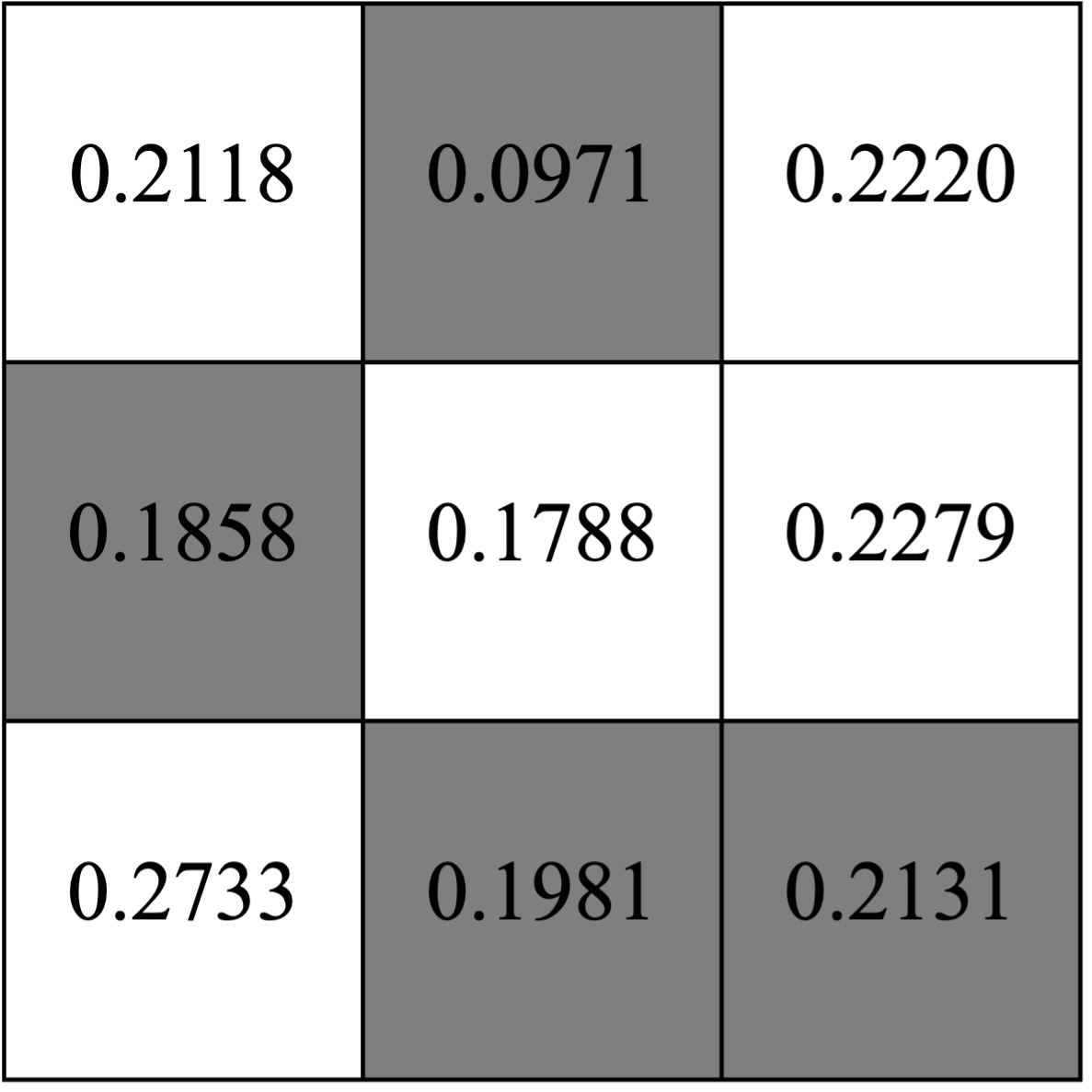}}
\hspace{0.2cm}
\subfigure[MSE of each sub-region between the Graphs (a) and (f) in 
Figure \ref{FIG11}.]{\includegraphics[width=0.3\linewidth]{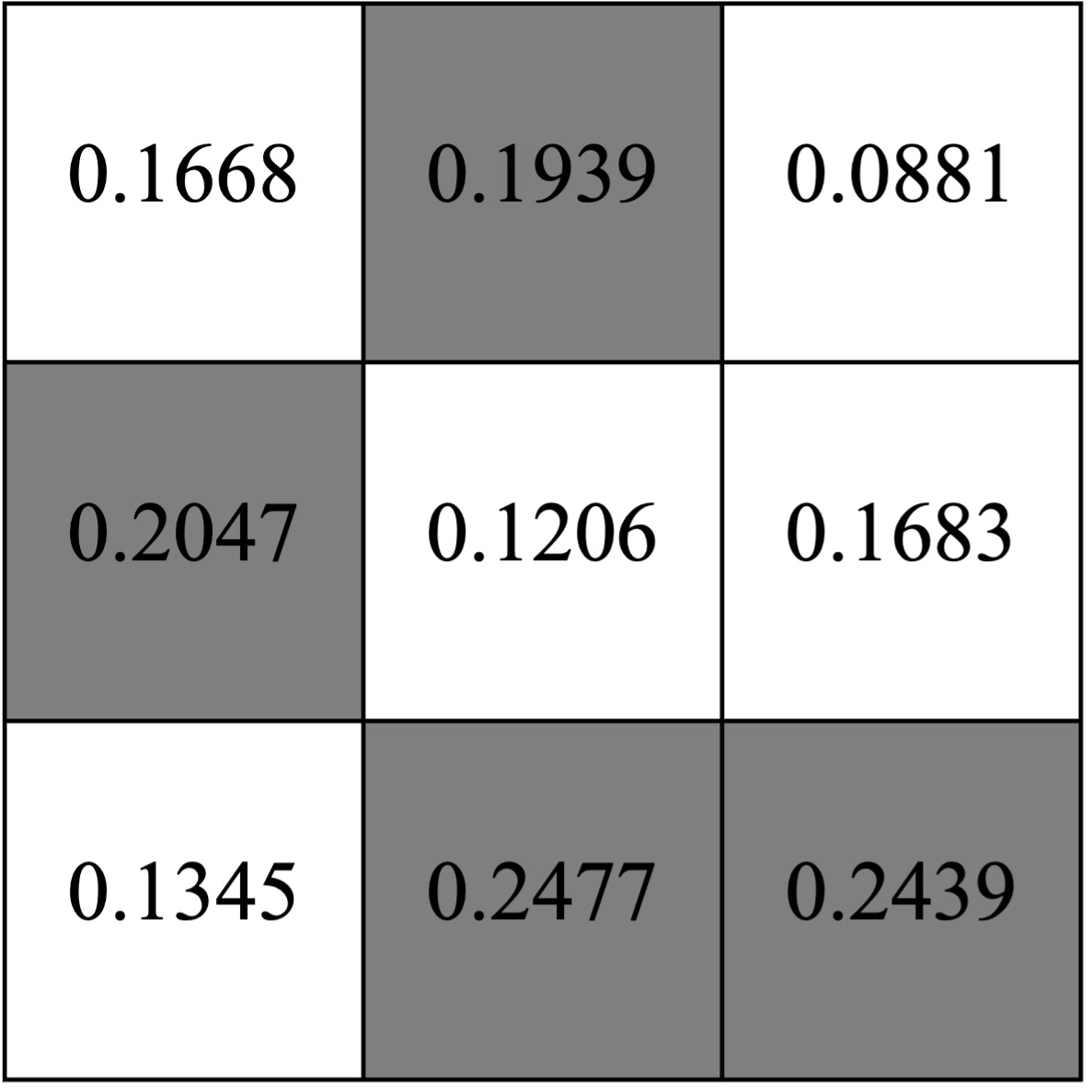}}
\caption{MSE between the original image and the edge detection images 
in Figure \ref{FIG11} after image segmentation.}
\label{FIG11-1}
\end{figure}

\begin{figure}[H]
\centering
\subfigcapskip=-30pt
\subfigure[]{\includegraphics[width=0.365\linewidth]{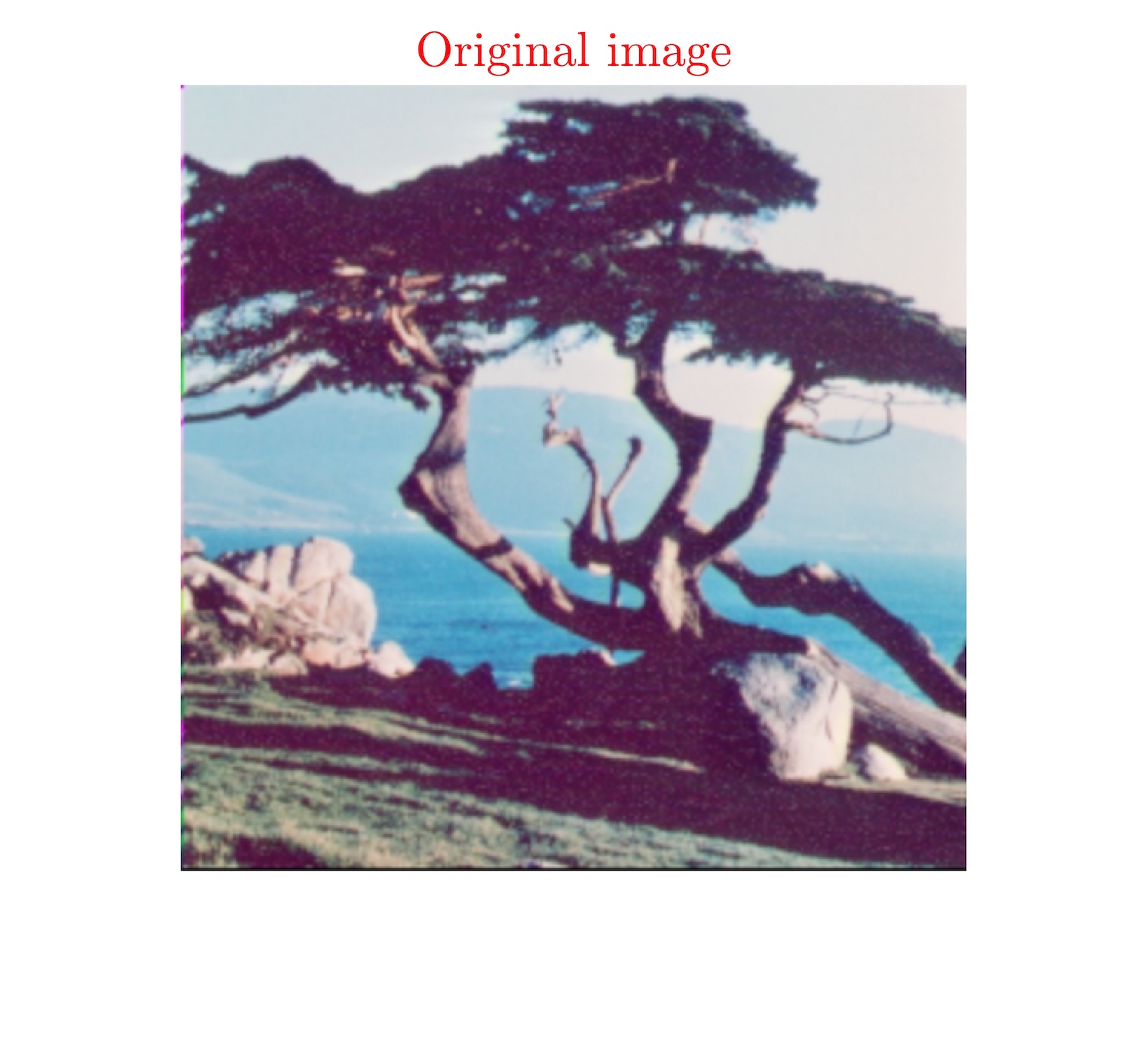}}
\hspace{-1cm}
\subfigure[]{\includegraphics[width=0.365\linewidth]{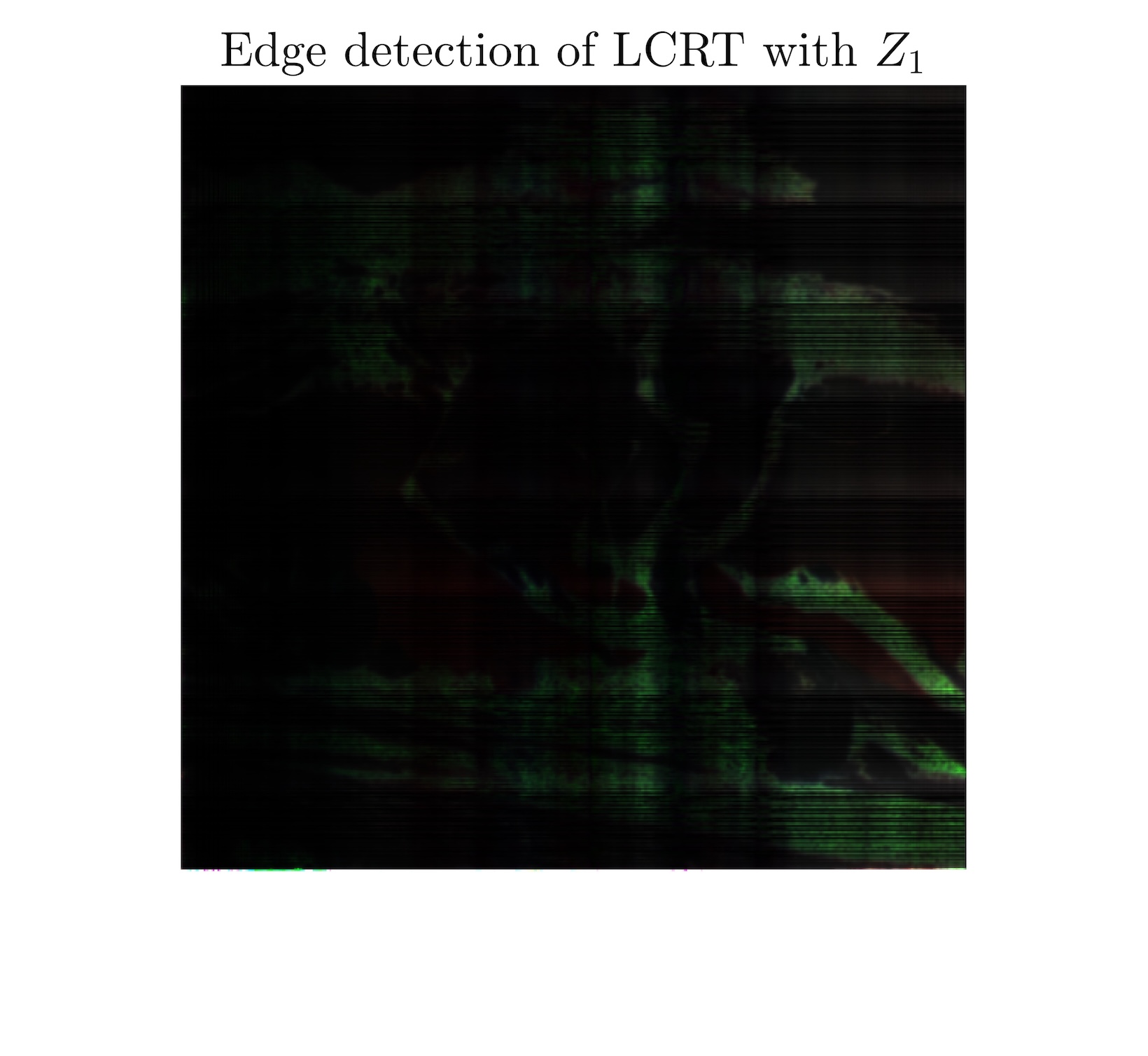}}
\hspace{-1cm}
\subfigure[]{\includegraphics[width=0.365\linewidth]{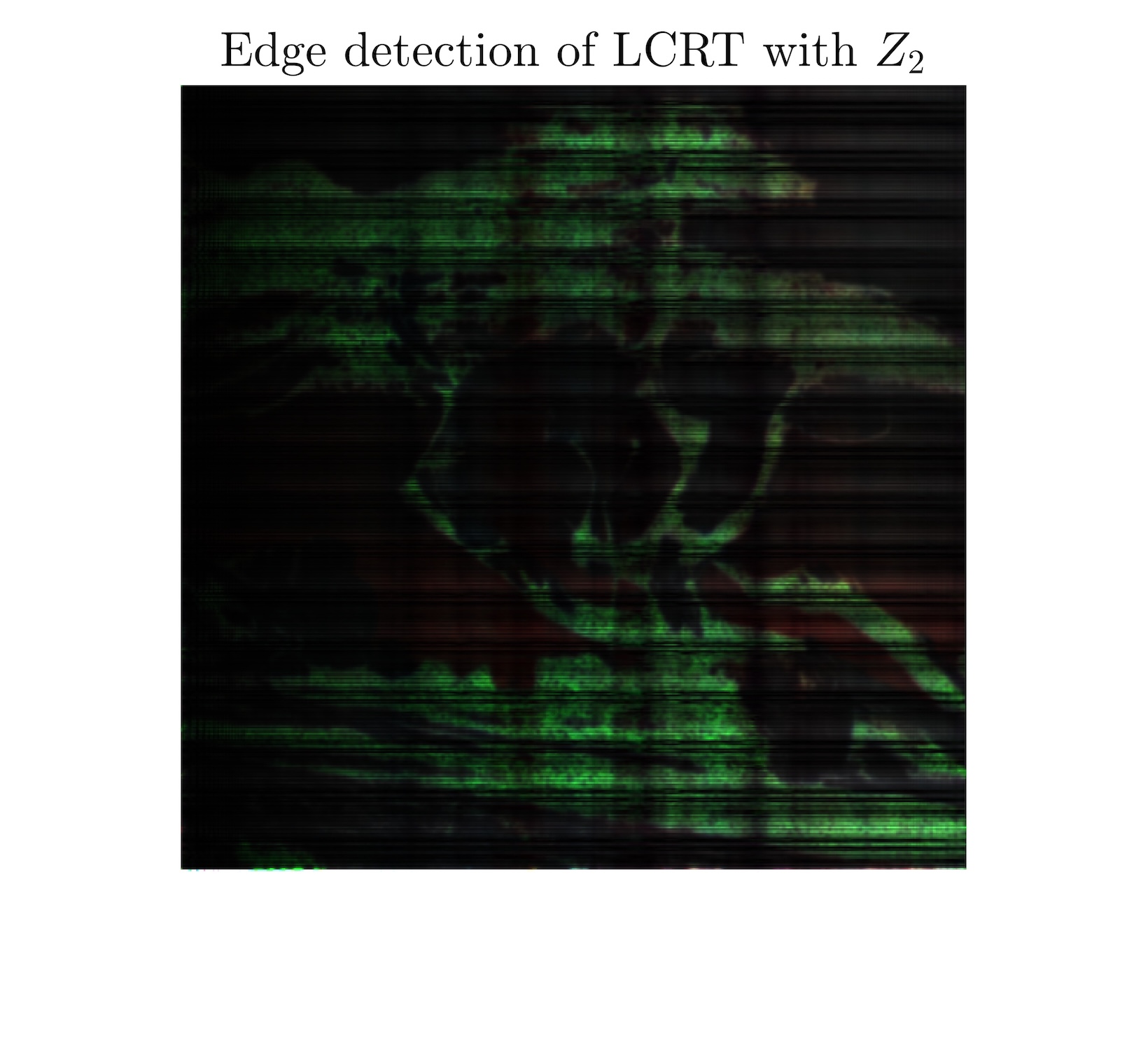}}\\
\subfigure[]{\includegraphics[width=0.365\linewidth]{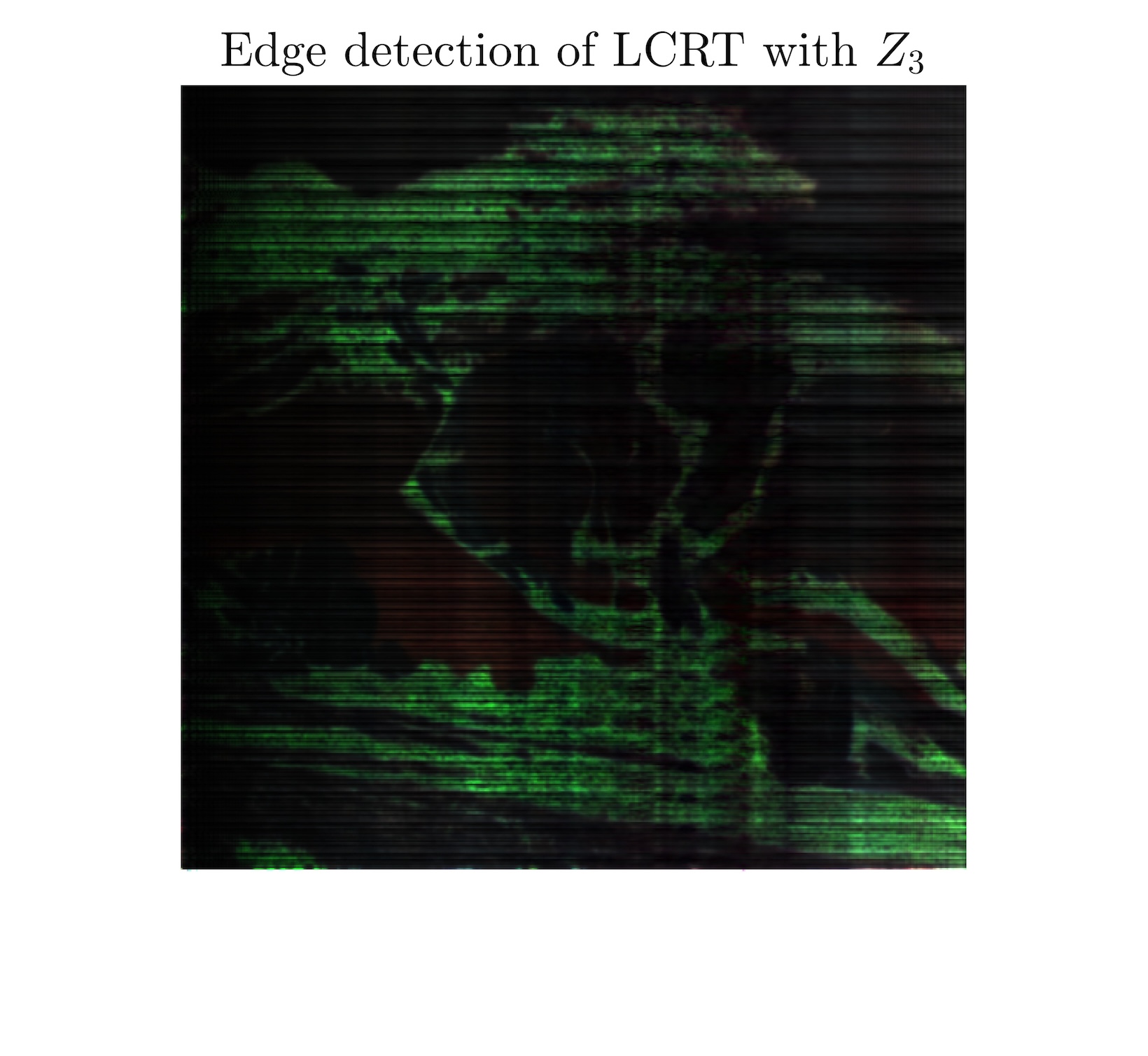}}
\hspace{-1cm}
\subfigure[]{\includegraphics[width=0.365\linewidth]{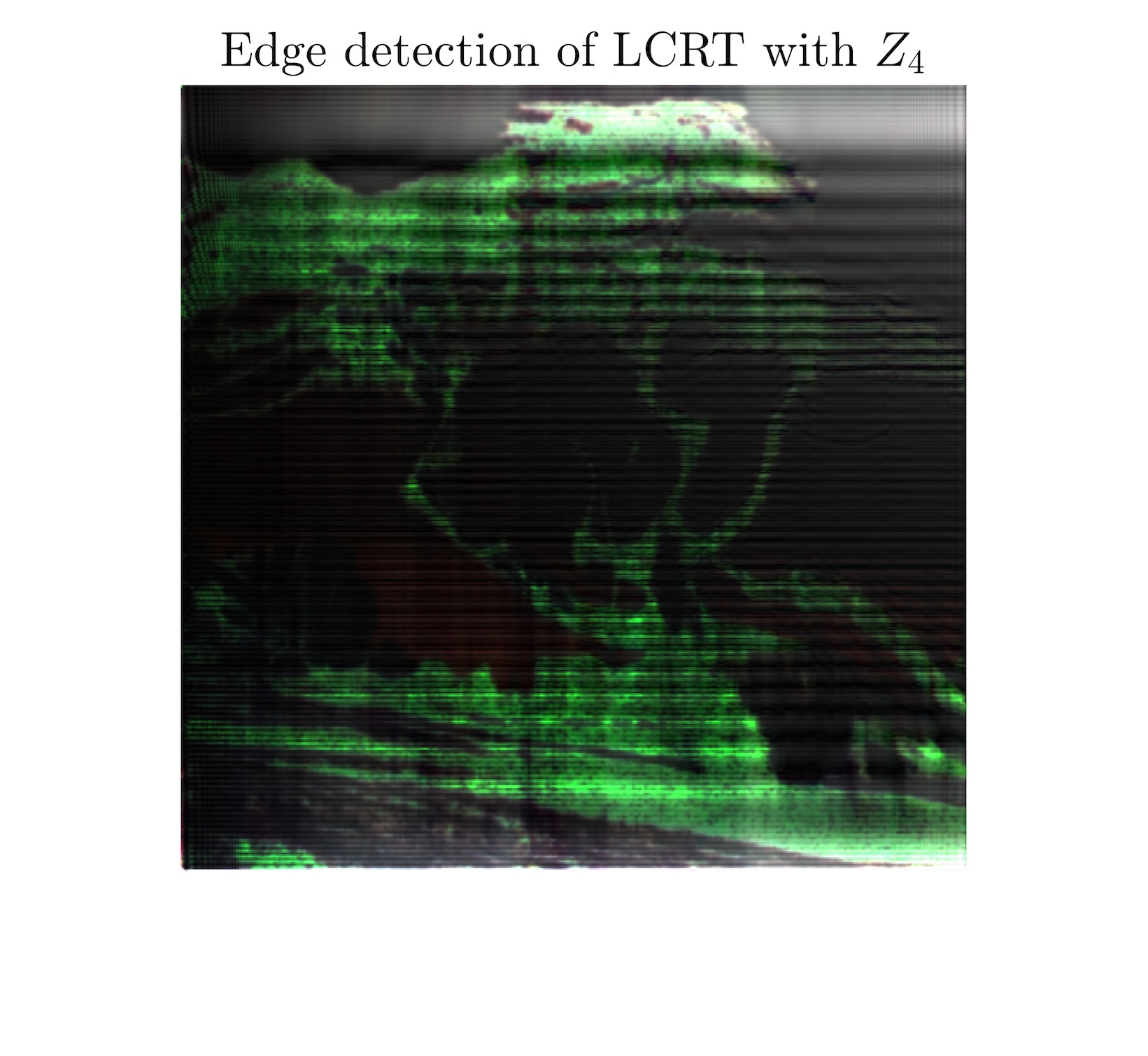}}
\hspace{-1cm}
\subfigure[]{\includegraphics[width=0.365\linewidth]{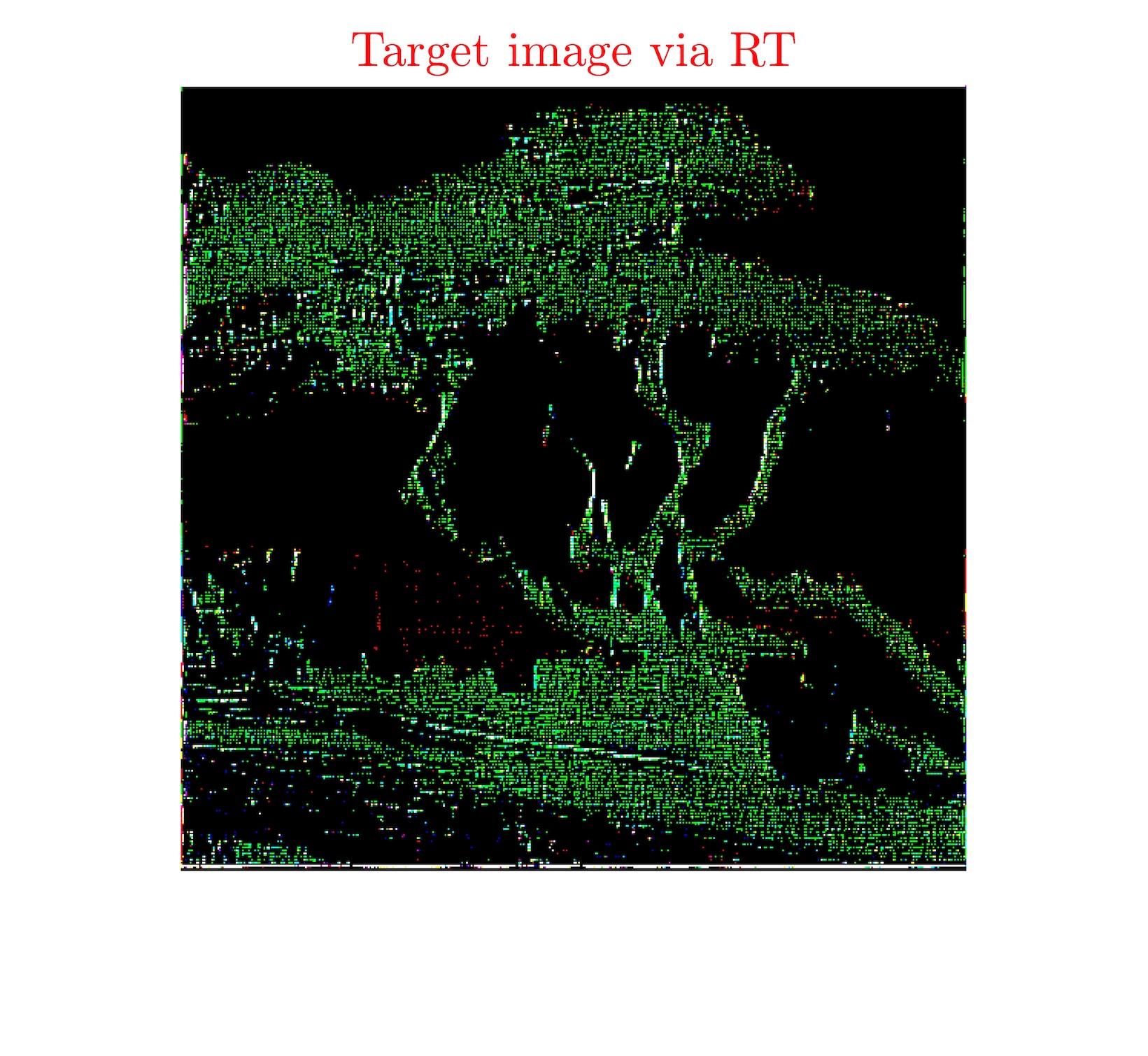}}
\vspace{-0.5cm}
\caption{Edge detection using LCRT-IED method on RGB image.}
\label{FIG1111}
\end{figure}

In Figure \ref{FIG1111}, we use the classic RGB 
image in Graph (a) as the original test image; Graphs (b), (c), 
(d), (e), and (f) correspond to the edge detection images 
via the LCRT-IED method by using,
respectively,  parameter matrices
$Z_1:=(z_1,z_2)$ with $z_1:=\begin{bmatrix}{0}\ &
{1}\\{-1}\ &{0}\end{bmatrix}$ and $z_2:=\begin
{bmatrix}{0}\ &{400}\\{-\frac{1}{400}}\ &{0}\end{bmatrix}$,
$Z_2:=(z_1,z_3)$ with  $z_3:=\begin
{bmatrix}{0}\ &{200}\\{-0.005}\ &{0}\end{bmatrix}$,
$Z_3:=(z_1,z_4)$ with  $z_4:=\begin{bmatrix}
{0}\ &{125}\\{-0.008}\ &{0}\end{bmatrix}$,
$Z_4:=(z_1,z_5)$ with  $z_5:=\begin{bmatrix}
{0}\ &{22}\\{-\frac{1}{22}}\ &{0}\end{bmatrix}$, and
$Z_5:=(z_1,z_1)$.
In particular, when the parameter matrix is $Z_5$, 
in this case the LCRT reduces to the classical Riesz 
transform. Graphs (b), (c), (d), and (e) demonstrate the 
gradual enhancement of  the edge  strength and continuity 
in the edge images through using  different parameter matrices 
in the LCRT, which gradually approach our target image Graph (f).

\begin{table}[H]
\centering
\begin{tabular}{cccccc}
\toprule[1.5pt]
{\bf Graph} &   (b) &   (c) &   (d) &   (e) &  (f) \\
\midrule[1pt]
$R_{\mathrm{sc}}^{\mathrm{E}}(A_2)$ & -160000 
& -40000 & -15625 & -484 & -1 \\
\midrule[1pt]
{\bf MSE(red)} & 0.2933 & 0.2731& 0.2659 & 0.2148 & 0.1193 \\
\midrule[1pt]
{\bf MSE(green)} & 0.3095 & 0.2975 & 0.2914 & 0.2688 & 0.1729 \\
\midrule[1pt]
{\bf MSE(blue)} & 0.3528 & 0.3319 & 0.3215 & 0.2697 & 0.1642 \\
\bottomrule[1.5pt]
\end{tabular}
\caption{$R_{\mathrm{sc}}^{\mathrm{E}}$ and 
MSE corresponding to the edge detection images in Figure 
\ref{FIG1111}.}
\label{tab:3}
\end{table}

\begin{figure}[H]
\centering
\subfigure[MSE of the Red Channel between (a) and (e) in Figure  \ref{FIG1111}.]
{\includegraphics[width=0.3\linewidth]{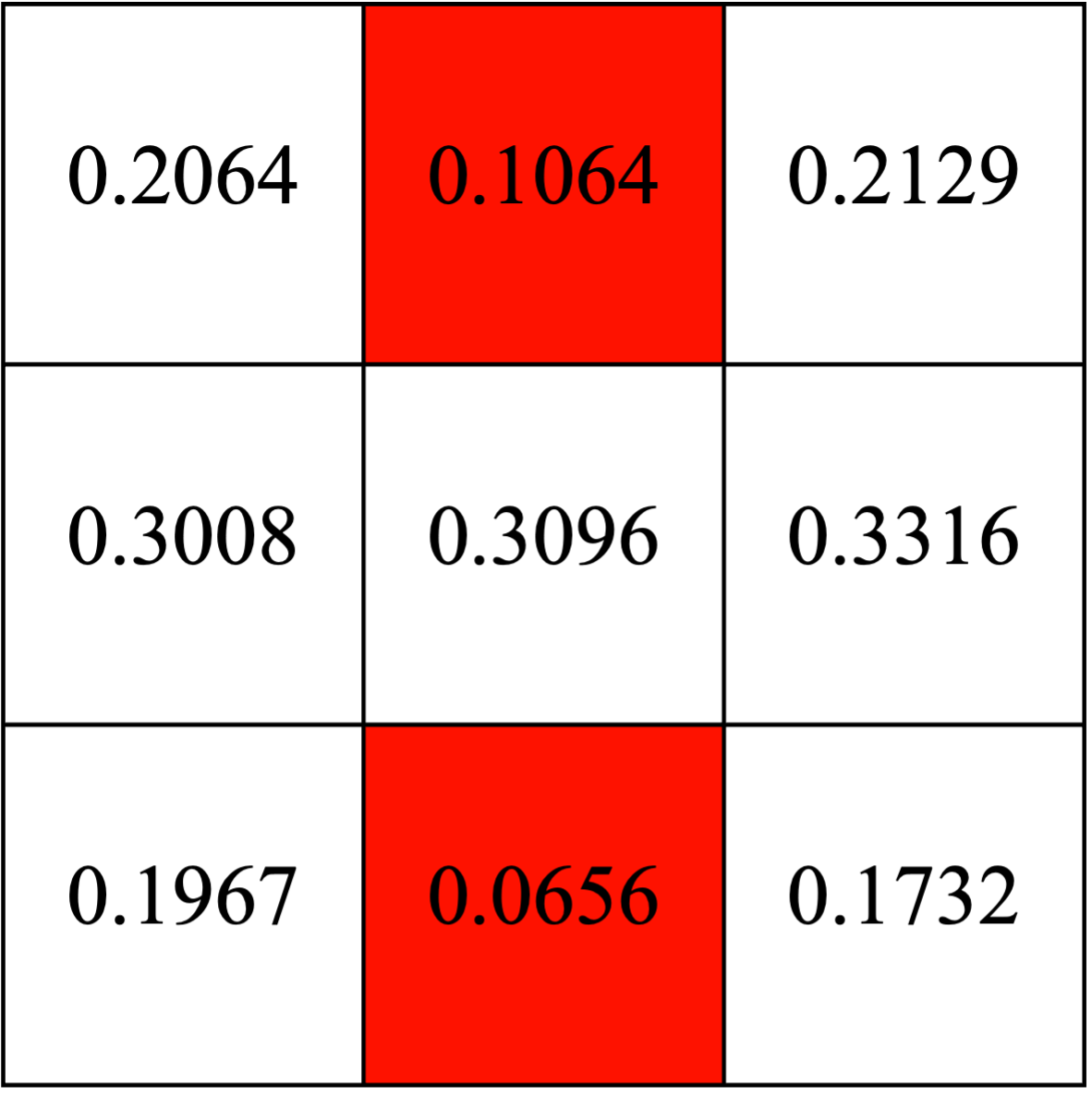}}
\hspace{0.2cm}
\subfigure[MSE of the green channel between (a) and (e) in Figure  \ref{FIG1111}.]
{\includegraphics[width=0.3\linewidth]{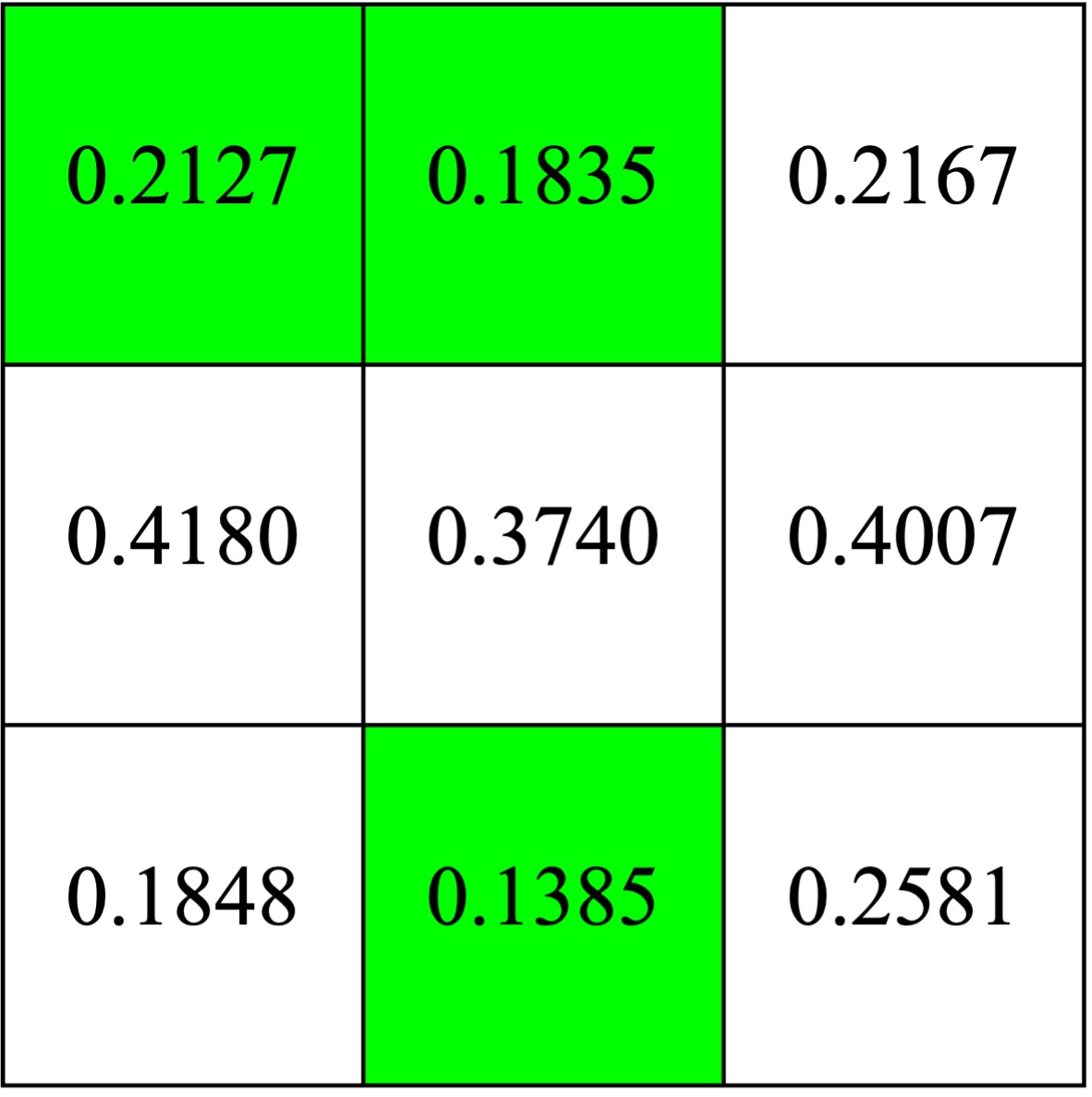}}
\hspace{0.2cm}
\subfigure[MSE of the blue channel between (a) and (e) in Figure  \ref{FIG1111}.]
{\includegraphics[width=0.3\linewidth]{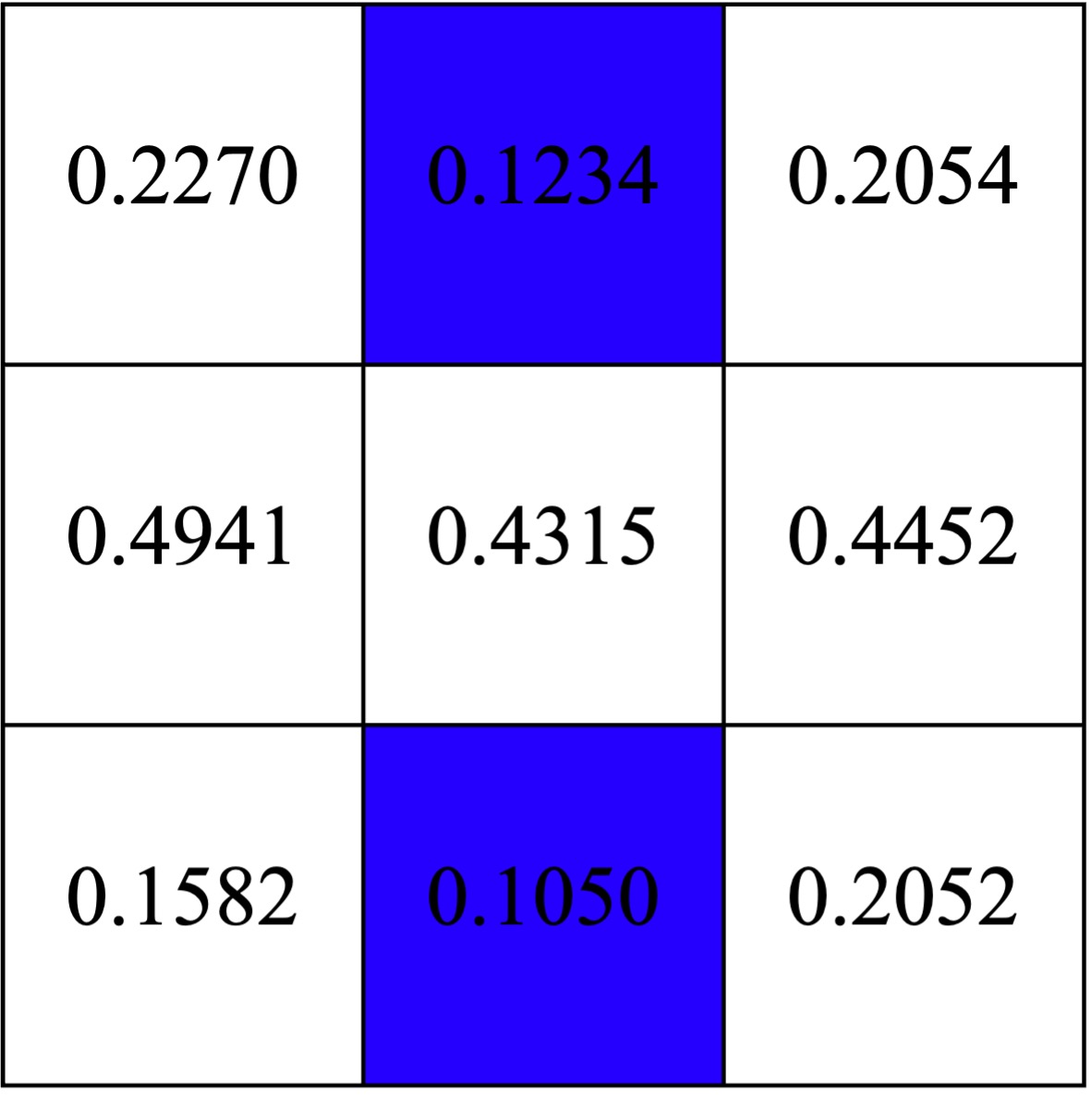}}\\
\subfigure[MSE of the red channel between (a) and (f) in Figure \ref{FIG1111}.]
{\includegraphics[width=0.3\linewidth]{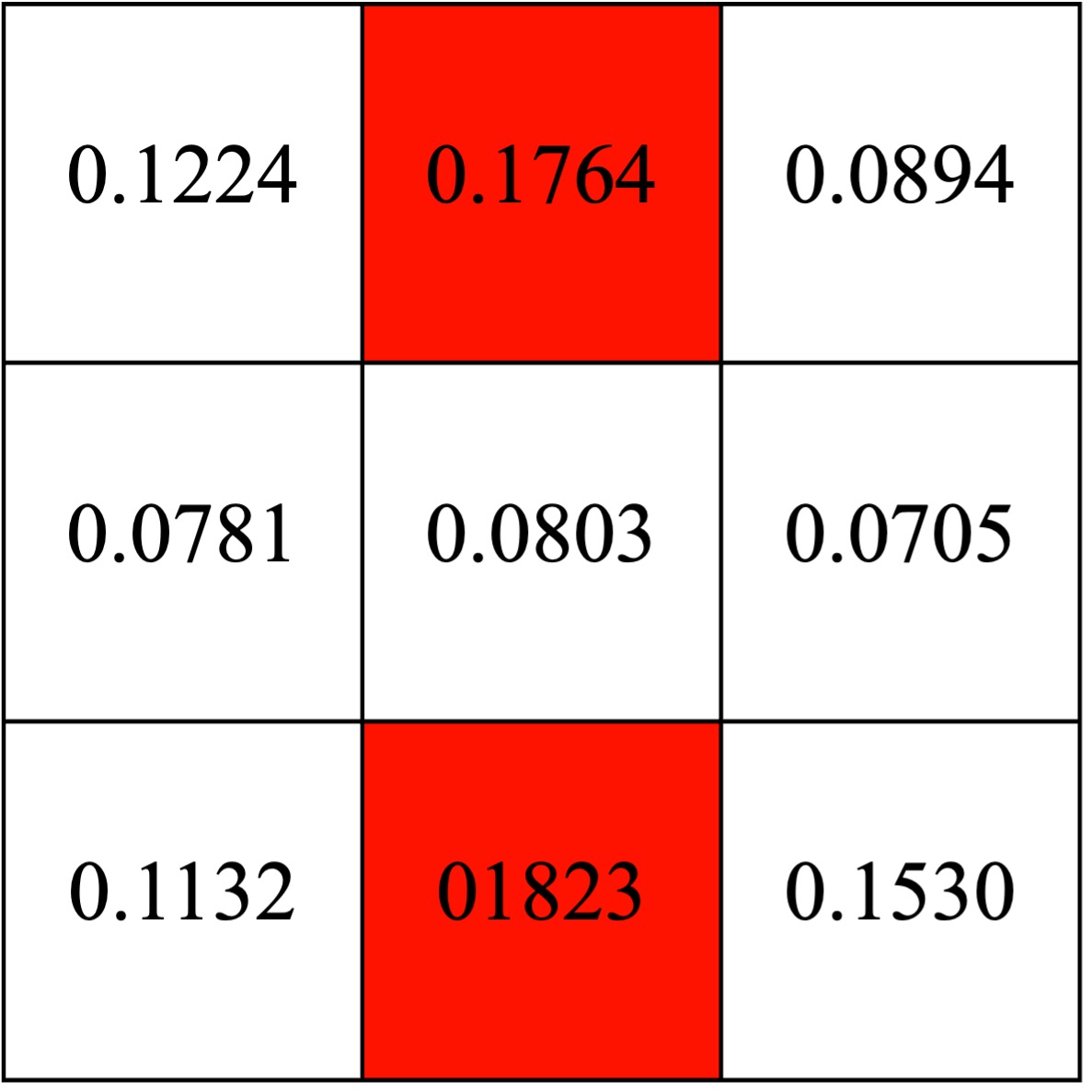}}
\hspace{0.2cm}
\subfigure[MSE of the green channel between (a) and (f) in Figure  \ref{FIG1111}.]
{\includegraphics[width=0.3\linewidth]{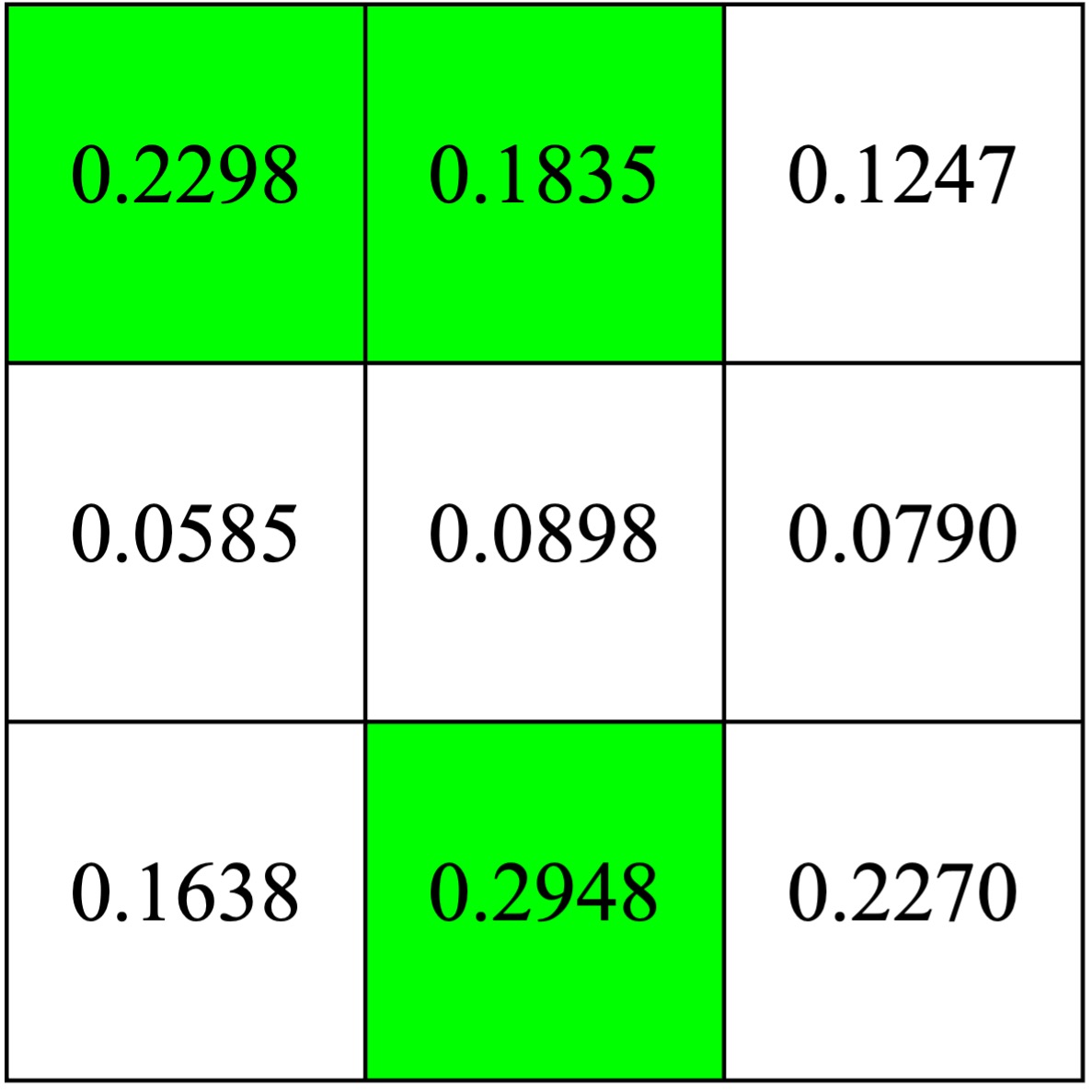}}
\hspace{0.2cm}
\subfigure[MSE of the blue channel between (a) and (f) in Figure  \ref{FIG1111}.]
{\includegraphics[width=0.3\linewidth]{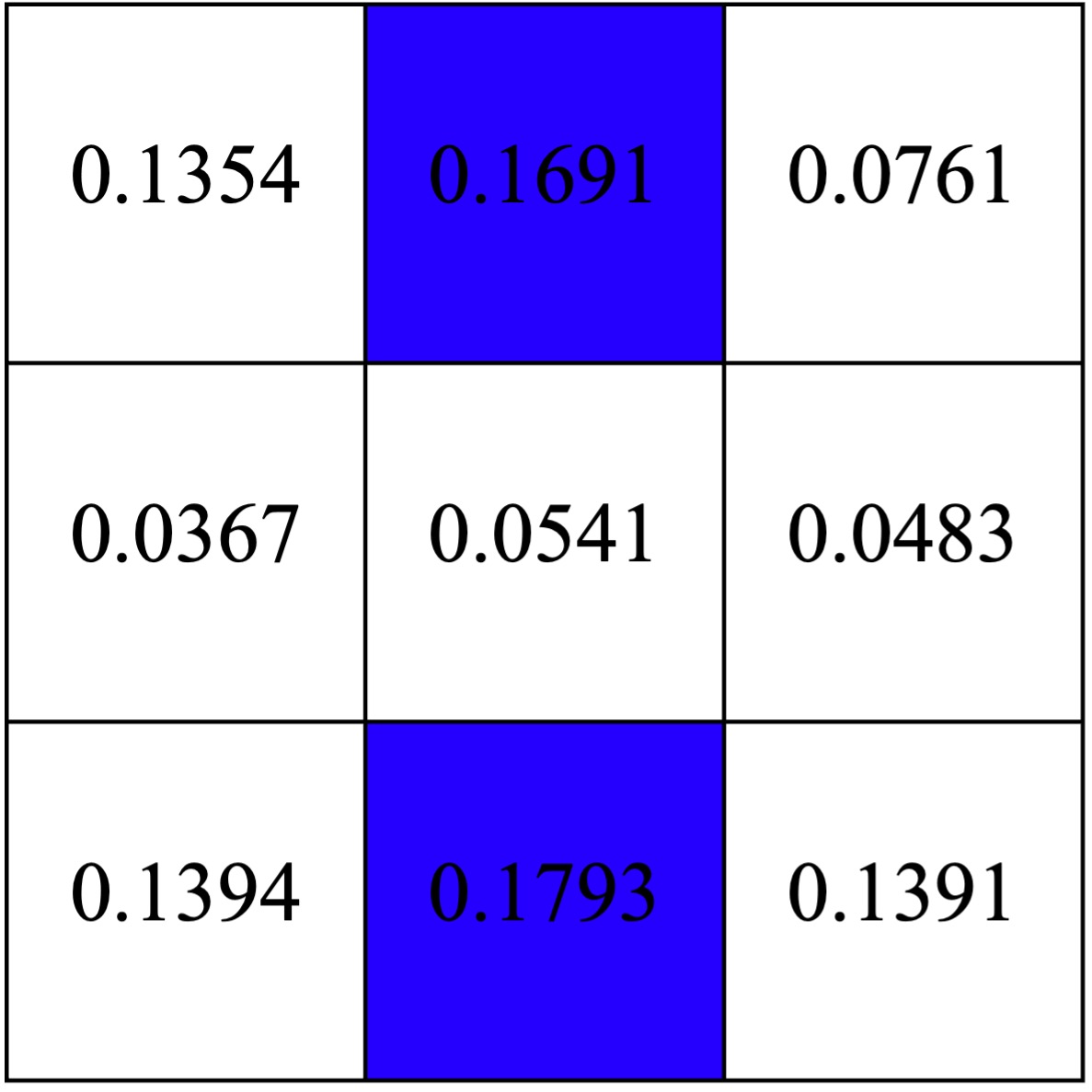}}
\caption{MSE between the original image and the edge detection images
 in Figure \ref{FIG1111} in the red, the green, and the blue 
channels after image segmentation.}
\label{FIG1111-1}
\end{figure}

Since the original test image in Figure \ref{FIG1111} is an
RGB image, Table \ref{tab:3} presents the $R_{\mathrm{sc}}
^{\mathrm{E}}$ corresponding to the edge detection images 
in Figure \ref{FIG1111}, as well as the MSE between the 
original image and the edge detection images in Figure 
\ref{FIG1111}, respectively, on the red, the green, and the blue channels. 
As shown in Table \ref{tab:3}, when $R_{\mathrm{sc}}^
{\mathrm{E}}$ approaches $-1$, the MSE between the 
original image and the edge detection images in Figure 
\ref{FIG1111} on all three channels increasingly decreases, 
indicating the higher degree of similarity between them.

Figure \ref{FIG1111-1} shows the MSEs, respectrvely,
 between Graphs (a) in Figure \ref{FIG1111} and Graphs (e) 
 and (f)  in Figure \ref{FIG1111} about each sub-region 
in the red, the green, and the blue channels after the same 
image segmentation as in Figure \ref{FIG10-1}. 
Particularly, Graphs (a), (b), and (c) in Figure \ref{FIG1111-1} 
display the MSEs, respectively, between Graphs (a) and (f) 
in Figure \ref{FIG1111}  the aforementioned 9
sub-regions in the red, the green, and the blue channels. Graphs (d), (e), and (r) in 
Figure \ref{FIG1111-1} display the MSEs, respectively, between 
images (a) and (e) in Figure \ref{FIG1111} about the aforementioned 9
sub-regions in the red, the green, and the blue channels.

As demonstrated by the experimental data in 
Figures \ref{FIG1111} and \ref{FIG1111-1}, as well as 
Table \ref{tab:3}, the results for RGB images and the two 
types of grayscale images show the high degree of consistency. 
This validates that the LCRT-IED method also possesses excellent 
feature extraction capabilities in RGB image processing.
 
Based on the results of the three sets of experiments 
mentioned above, we find that this new LCRT-IED method
introduced above has two key  advantages:
\begin{enumerate}[(i)]
\item
{\bf Effective control of edge properties}:
The  LCRT-IED method is able to gradually control the image 
edge strength and continuity.
\item
{\bf Local feature extraction}:
The  LCRT-IED method demonstrates the significant capability in 
feature extraction about some local regions, enabling more precise 
preservation of image detail information.
\end{enumerate}
These advantages highlight the practical value of the LCRT-IED method 
for image processing tasks in complex scenarios.
When we reduce from LCRTs to fractional Riesz transforms,
selecting a specific fractional order can effectively extract
local information in specific directions of the image.
It is worth mentioning that the experimental results
presented in this article do \emph{not require final binarization}
like the fractional Riesz transform edge detection
method in \cite{fglwy23} and, moreover, the results are equally striking.
This can be attributed to the fact that the LCT is able
to perform not only rotations but also scaling operations
in the time-frequency plane. Furthermore, the LCRT-IED method 
also performs exceptionally well when processing RGB images.

\begin{remark}
By observing the results of the three sets of experiments above, 
we notice that, in the above experiments, we always fix the parameter matrix 
$A_1$ in LCRTs $\{R_1^{\boldsymbol{A}},R_2^
{\boldsymbol{A}}\}$  with $\boldsymbol{A}:=(A_1,A_2)$ as 
in Definition \ref{Rieszdef1} with $n=2$ and only allow $A_2$ to vary. This naturally raises a question:
 if we instead fix $A_2$ and let $A_1$ vary, would the same effect be observed? 
 However, experimental validation shows that this is not the case, and only some
 weak edge information is available in this case.  The deep reason for this lies in the fact that the roles of $R_1^{\boldsymbol A}$ and $R_2^{\boldsymbol A}$ are 
  not symmetric in the local direction $\theta(\boldsymbol{x})$  in  \eqref{eqlo}, leading to an 
  essential difference  in their effectiveness within the model under consideration.
\end{remark}

\subsection{Advantages and Features \label{subsec5.2}}

In image processing, finely tuning the strength and the continuity 
of image edges can significantly enhance processing effects. The 
advantages and features of  LCRT-IED methods can be summarized 
as follows:
\begin{enumerate}[(i)]
\item {\bf Improvement of the efficiency of image matching}:
In image matching, we are confronted with a challenging 
task: how to efficiently identify images that match the target image 
from a vast database containing millions of images. To address this 
challenge, we can employ a multi-stage processing method based on 
edge strength and continuity. This method adopts a hierarchical 
strategy to refine the matching process step by step. Initially, 
the LCRT-IED method is applied to extract edge features with 
varying strengths and continuities. Subsequently, these features
 are used to train and evaluate models, thereby selecting the 
refined edge-detected images for subsequent matching stages. 
 By combining coarse matching and fine matching, dissimilar 
 images are filtered out, and the focus is narrowed down to 
 candidate images with high similarity. Finally, the best-matching 
 image is determined through pairwise comparison. This multi-stage 
 approach not only simplifies the complexity of the matching 
 process but also demonstrates significant advantages when dealing 
 with large-scale image databases.

\item
{\bf Refinement of feature extraction}:
Edges, as important indicators of image brightness changes,
are often closely related to the contours or the boundaries
of objects under consideration.
Edge detection techniques can effectively capture this critical
information, laying a solid foundation for deeper image
analysis and interpretation. For example, in medical image
analysis, edge detection  help to identify the boundaries of
tumors, and, in facial recognition systems, edge detection can
extract facial features. By adjusting the strength and  the continuity
of the edges, these features can be more accurately identified
and emphasized, thus improving the accuracy of image analysis.
\item
{\bf Image refinement processing}:
By employing advanced edge detection techniques, key
information in the image can be accurately retained while
cleverly removing unnecessary pixel details. For example,
in the manufacturing industry, image refinement processing
techniques can be used to inspect product surfaces. By
accurately preserving edge information, surface defects such
as scratches or blemishes can be quickly identified. This fine-tuning
not only significantly reduces computational burden and
speeds up processing, but also greatly enhances the intuitiveness
and the efficiency of image analysis.
\item
{\bf Catalyst for subsequent processing}:
Finely tuning the strength and the continuity of image
edges provides a solid and precise foundation for
subsequent tasks such as image segmentation,
object recognition, and tracking. The  refined
edge information simplifies the accurate identification
and the localization of target objects, significantly
improving the efficiency and the accuracy of follow-up
processing tasks. This ensures the smoothness and the
efficiency of the entire image analysis process and
provides strong support for subsequent stages of
image processing.
\end{enumerate}

\section{Conclusions}\label{sec5}
This article introduces LCRTs on the
basis of LCTs and Riesz transforms.  We also further perform
a series of numerical simulations on images
using both LCRTs and HLCHTs. By comparing these simulation
experiments, we validate their respective effectiveness and highlight
the differences among them. 
In addition, we apply LCRTs to refine image 
edge detection and propose a new LCRT image edge detection method, namely the
LCRT-IED method, by first introducing the concept of 
the sharpness $R^{\mathrm E}_{\mathrm{sc}}$ of the edge 
strength and continuity of images 
associated with the LCRT.  This method is able to gradually control the edge strength 
and continuity by subtly tuning the parameter matrices of LCRTs  based on the sharpness $R^{\mathrm E}_{\mathrm{sc}}$
and performs well in feature extraction on some local regions.
Moreover, this method is not only applicable to grayscale 
images but also works well with RGB images. This new method can be
regarded as the refinement of the fractional Riesz transform edge detection method
in \cite{fglwy23}. Serving as a 
catalyst for subsequent image processing, it is of great 
significance for image matching, image feature extraction, 
and image refinement processing.

\section*{Data availability}

No data was used for the research described in the article.

\bigskip

\noindent  Shuhui Yang  and Dachun Yang (Corresponding author)

\smallskip

\noindent  Laboratory of Mathematics and Complex
Systems (Ministry of Education of China),
School of Mathematical Sciences, Beijing Normal
University, Beijing 100875, The People's Republic of China

\smallskip

\noindent{\it E-mails:} \texttt{shuhuiyang@bnu.edu.cn} (S. Yang)

\noindent\phantom{{\it E-mails:}} \texttt{dcyang@bnu.edu.cn} (D. Yang)

\bigskip

\noindent Zunwei Fu

\smallskip

\noindent School of Mathematics and Statistics, Linyi University,
Linyi 276000, The People's Republic of China; College of Information
Technology, The University of Suwon, Hwaseong-si 18323,
South Korea

\smallskip

\noindent{\it E-mail:} \texttt{zwfu@suwon.ac.kr}

\bigskip

\noindent  Yan Lin and Zhen Li

\smallskip

\noindent School of Science, China University of Mining and
Technology, Beijing 100083, The People's Republic of China

\smallskip

\noindent{\it E-mails:} \texttt{linyan@cumtb.edu.cn} (Y. Lin)

\noindent\phantom{{\it E-mails:}} \texttt{lizhen@student.cumtb.edu.cn} (Z. Li) 
\end{document}